\newtheorem{theorem}{Theorem}[section]
\newtheorem{lemma}[theorem]{Lemma}
\newtheorem{proposition}[theorem]{Proposition}
\newtheorem{corollary}[theorem]{Corollary}
\newtheorem{claim}[theorem]{Claim}
\newtheorem{question}[theorem]{Question}
\newtheorem{conjecture}[theorem]{Conjecture}
\theoremstyle{definition}
\newtheorem{definition}[theorem]{Definition}
\newtheorem{remark}[theorem]{Remark}
\newcommand{\dom}{\mathrm{dom}}
\newcommand{\bb}{\mathbb}
\newcommand{\mc}{\mathcal}
\newcommand{\power}{\ensuremath{\mathscr{P}}}
\newcommand{\sub}{\subseteq}
\newcommand{\ul}{\underline}
\newcommand{\ra}{\rightarrow}
\newcommand{\Add}{\mathrm{Add}}
\newcommand{\R}{\bb{R}}
\newcommand{\Q}{\bb{Q}}
\renewcommand{\P}{\bb{P}}
\newcommand{\B}{\bb{B}}
\newcommand{\Z}{\bb{Z}}
\newcommand{\ZFC}{\sf ZFC}
\newcommand{\MA}{\sf MA}
\newcommand{\CondAb}{\mathsf{CondAb}}
\newcommand{\Ab}{\mathsf{Ab}}
\newcommand{\Cond}{\mathsf{Cond}}
\newcommand{\Ker}{\mathrm{Ker}}
\newcommand{\Ext}{\mathrm{Ext}}
\newcommand{\Hom}{\mathrm{Hom}}
\newcommand{\iExt}{\underline{\mathrm{Ext}}}
\newcommand{\iHom}{\underline{\mathrm{Hom}}}
\title{Whitehead's problem and condensed mathematics}
\author{Jeffrey Bergfalk}
\address[Bergfalk]{Departament de Matem\`{a}tiques i Inform\`{a}tica \\
Universitat de Barcelona \\
Gran Via de les Corts Catalanes, 585 \\ 08007 Barcelona, Catalonia}
\email{bergfalk@ub.edu}
\author{Chris Lambie-Hanson}
\address[Lambie-Hanson]{
Institute of Mathematics, 
Czech Academy of Sciences, 
{\v Z}itn{\'a} 25, Prague 1, 
115 67, Czech Republic
}
\email{lambiehanson@math.cas.cz}
\urladdr{https://clambiehanson.github.io}
\author{Jan \v{S}aroch}
\address[\v{S}aroch]{
Charles University, 
Faculty of Mathematics and Physics,
Department of Algebra,
Sokolovsk{\'a} 83, Prague 8, 
186 75, Czech Republic
}
\email{saroch@karlin.mff.cuni.cz}
\subjclass[2020]{03E35, 03E40, 03E05, 13C10, 20J05}
\keywords{Whitehead's problem, condensed mathematics, forcing}
\thanks{The first author was supported by Mar\'{i}a Zambrano and Marie Sk\l odowska Curie (project 101110452: CatT) Fellowships at the University of Barcelona; the second author was supported by GA\v{C}R project 23-04683S 
and the Academy of Sciences of the Czech Republic (RVO 67985840); the third author was supported by GA\v{C}R project\ 23-05148S}
\begin{document}

\begin{abstract}
One of the better-known independence results in general mathematics is Shelah's solution to Whitehead's problem of whether $\mathrm{Ext}^1(A,\mathbb{Z})=0$ implies that an abelian group $A$ is free.
The point of departure for the present work is Clausen and Scholze's proof that, in contrast, one natural interpretation of Whitehead's problem within their recently-developed framework of condensed mathematics has an affirmative answer in $\ZFC$.
We record two alternative proofs of this result, as well as several original variations on it, both for their intrinsic interest and as a springboard for a broader study of the relations between condensed mathematics and set theoretic forcing.
We show more particularly how the condensation $\underline{X}$ of any locally compact Hausdorff space $X$ may be viewed as an organized presentation of the forcing names for the points of canonical interpretations of $X$ in all possible set-forcing extensions of the universe, and we argue our main result by way of this fact.
We also show that when interpreted within the category of light condensed abelian groups, Whitehead's problem is again independent of the $\ZFC$ axioms.
In fact we show that it is consistent that Whitehead's problem has a negative solution within the category of $\kappa$-condensed abelian groups for every uncountable cardinal $\kappa$, but that this scenario, in turn, is inconsistent with the existence of a strongly compact cardinal.
\end{abstract}

\maketitle

\section{Introduction}

Recall that an abelian group $A$ is \emph{Whitehead} if $\Ext^1(A, \Z) = 0$. Around 1950, J.H.C.\ Whitehead 
asked whether every such group is free. The converse is easily seen to be true (every free abelian 
group $A$ satisfies $\Ext^1(A,G) = 0$ for \emph{every} abelian group $G$, and hence for $G = \Z$), and 
Stein showed soon after the question was posed that every \emph{countable} Whitehead group is free 
\cite{stein}.
Progress on the general question was slow, though, and the latter soon assumed the status of a major open problem in homological algebra.
Shelah's solution to the question 20 years later was completely unexpected, for Whitehead's problem thereby became one of the first originating unequivocally outside of logic or set theory to be proven independent of the $\ZFC$ axioms.

\begin{theorem}[Shelah, \cite{Shelah_infinite_74}] \label{shelah_thm}
  Whitehead's problem is independent of $\ZFC$. In particular:
  \begin{enumerate}
    \item If $V = L$, then every Whitehead group is free.
    \item If $\MA_{\omega_1}$ holds, then there is a nonfree Whitehead group of size $\aleph_1$.
  \end{enumerate}
\end{theorem}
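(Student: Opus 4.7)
The plan is to handle the two parts by largely independent set-theoretic techniques: Jensen's $\diamondsuit$ for part (1), and a ccc forcing argument wrapped in $\MA_{\omega_1}$ for part (2). In both cases one first reduces to constructing or ruling out nonfree Whitehead groups of size exactly $\aleph_1$. Stein's theorem gives freeness below $\aleph_1$; for cardinals above $\aleph_1$ in part (1), I would invoke Shelah's singular compactness theorem, which lets one bootstrap any freeness result for groups of size $\aleph_1$ to all larger cardinalities (the crucial fact being that Whiteheadness is inherited by pure subgroups, so an almost-free-but-not-free counterexample of minimal size must have cardinality $\aleph_1$).

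For (1), assume $V = L$, so in particular $\diamondsuit_S$ holds for every stationary $S \subseteq \omega_1$. Suppose for contradiction that $A$ is a Whitehead group of size $\aleph_1$ that is not free. Then $A$ is $\aleph_1$-free (a Whitehead group cannot contain a nonfree countable subgroup, by Stein), so I would fix a filtration $A = \bigcup_{\alpha<\omega_1} A_\alpha$ by countable pure subgroups. The $\Gamma$-invariant analysis then produces a stationary set $S \subseteq \omega_1$ of limit ordinals at which $A/A_\alpha$ fails to be $\aleph_1$-free in the following strong sense: there exists $\beta > \alpha$ such that $A_\beta/A_\alpha$ is not free. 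The idea is now to build an extension $0 \to \Z \to B \to A \to 0$ inductively along the filtration, using the $\diamondsuit_S$-guesses $f_\alpha \colon A_\alpha \to \Z$ to preemptively spoil every possible splitting: at each $\alpha \in S$, the freeness obstruction in $A_\beta/A_\alpha$ gives room to twist the extension of $B_\alpha$ to $B_\beta$ so that no homomorphism extending $f_\alpha$ can split the restriction. A Fodor-style reflection argument then shows that any hypothetical splitting $A \to B$ would be caught by some $f_\alpha$, contradicting that $A$ is Whitehead. The main delicacy here is the diamond bookkeeping against the $\Gamma$-invariant obstructions, which must be threaded together carefully.

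For (2), assume $\MA_{\omega_1}$. I would first produce a nonfree $\aleph_1$-free group $A$ of size $\aleph_1$ by a ladder-system construction: fix a ladder system $(\eta_\delta)_{\delta \in \lim(\omega_1)}$ and let $A$ be generated by $\{x_\alpha : \alpha < \omega_1\} \cup \{y_{\delta, n} : \delta \in \lim(\omega_1),\, n \in \omega\}$ subject to relations of the form $2 y_{\delta, n+1} = y_{\delta, n} + x_{\eta_\delta(n)}$, which forces $A/\langle x_\alpha\rangle_{\alpha<\omega_1}$ to contain $2$-divisible elements witnessing non-freeness while keeping each countable subgroup free. Given any short exact sequence $0 \to \Z \to B \xrightarrow{\pi} A \to 0$, I would define a poset $P$ whose conditions are finite partial sections of $\pi$ defined on initial segments of the generators and coherent with the divisibility relations, ordered by extension. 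Density of the appropriate sets is straightforward; the hard part, and the main obstacle of the whole argument, is establishing that $P$ is ccc. This is done via a $\Delta$-system argument that exploits the thinness of the ladder system (pressing-down to find an uncountable family of conditions with the same trace on a fixed countable core and a uniform ladder pattern, then demonstrating pairwise compatibility by choosing the two-valued parameters consistently). Once ccc is in hand, $\MA_{\omega_1}$ yields a sufficiently generic filter whose union is a full splitting $\sigma \colon A \to B$, so $\Ext^1(A, \Z) = 0$ and $A$ is the desired nonfree Whitehead group.
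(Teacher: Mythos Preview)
The paper does not prove this theorem; it is stated with attribution to Shelah and serves purely as historical background, so there is no proof in the paper to compare your proposal against. That said, your sketch is a reasonable outline of the standard arguments, with one genuine gap.

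In part (1), your reduction to cardinality $\aleph_1$ is incorrect as stated. Singular compactness only handles singular cardinals: it tells you that a minimal nonfree Whitehead group must have \emph{regular} cardinality, not that it must have cardinality $\aleph_1$. Your claim that ``an almost-free-but-not-free counterexample of minimal size must have cardinality $\aleph_1$'' simply does not follow from singular compactness plus heredity of Whiteheadness. To prove the full statement under $V=L$, one proceeds by induction on the cardinality $\kappa$ of the group, invoking $\diamondsuit_\kappa(S)$ (which holds in $L$ for every regular uncountable $\kappa$ and every stationary $S \subseteq \kappa$) at each regular stage. The $\diamondsuit$-based construction you describe for $\aleph_1$ is the correct template, but it must be rerun at every regular cardinal, with the filtration now into subgroups of size ${<}\kappa$ and the inductive hypothesis supplying freeness below $\kappa$.

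For part (2), your ladder-system construction and ccc poset are one correct route. When the paper later revisits this direction (Theorem~\ref{shelah_ma_thm} and the surrounding discussion), it prefers the more abstract framing via Chase's condition: Griffith constructed in $\ZFC$ a nonfree group of size $\aleph_1$ satisfying Chase's condition, and Shelah showed that under $\MA_{\omega_1}$ every such group is Whitehead. Your ladder group is a concrete instance of a group satisfying Chase's condition, so the two approaches are compatible, yours being more hands-on and the paper's more modular.
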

Shelah's solution of Whitehead's problem initiated a program 
of research applying set 
theoretic methods to module theory that continues to bear fruit 
today. This program has uncovered deep connections between Whitehead's problem and its variations and purely combinatorial set theoretic phenomena, and has touched on additional topics including the study of 
almost free modules and of the approximation 
theory of modules. For more on such developments in the 
intervening years, see, e.g., \cite{Eklof_Mekler, gobel_trlifaj, Magidor_Shelah}.

Recently, Clausen and Scholze have developed a framework, known as 
\emph{condensed mathematics},
for applying algebraic tools to algebraic structures 
endowed with nontrivial topologies (cf.\ \cite{CS1}, \cite{CS2}, 
\cite{CS3}). Here, we will be particularly interested 
in the category $\CondAb$ of \emph{condensed abelian 
groups}, where the objects are, roughly speaking, certain 
contravariant functors from the category of Stone spaces to 
the category of abelian groups. Every T1 topological abelian group $A$ has a corresponding 
condensed abelian group $\ul{A}$ (if an abelian group is given 
without specifying its topology, then we interpret it as a 
topological abelian group with the discrete topology). 
In addition, $\CondAb$ has an internal $\Hom$ functor 
$\iHom : \CondAb^{\mathrm{op}} \times \CondAb \ra \CondAb$, 
and corresponding internal $\Ext$ functors $\iExt^n$ for 
$n > 0$. It turns out that, when interpreted appropriately 
in $\CondAb$, Whitehead's problem is no longer independent of 
$\ZFC$. In particular, Clausen and Scholze proved the following 
theorem (cf.\ \cite{masterclass}).

\begin{theorem}[Clausen--Scholze, 2020] \label{cs_whitehead_thm}
  It follows from the $\ZFC$ axioms that, for every abelian group $A$, if 
  \[
    \iExt^1(\ul{A}, \ul{\bb{Z}}) = 0,
  \]
  then $A$ is free.
\end{theorem}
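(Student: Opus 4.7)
My plan is to translate the condensed hypothesis $\iExt^1(\ul A,\ul\Z)=0$ into a strong family of classical Ext-vanishings, and then to use these against free N\"obeling-type test modules to exhibit $A$ as a direct summand of a free abelian group.

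First I would unpack the condensed Ext on sections. Evaluating at a point yields $\iExt^1(\ul A,\ul\Z)(*)\cong\Ext^1(A,\Z)$, so the hypothesis implies that $A$ is classically Whitehead; since the property passes to subgroups and every countable Whitehead group is free by Stein's theorem, $A$ is $\aleph_1$-free. More generally, for any Stone space $S$ the adjunction between tensoring with $\Z[\ul S]$ and the internal $\Hom$, together with the full-faithfulness and exactness of the condensation functor on discrete abelian groups, identifies
\[
\iExt^1(\ul A,\ul\Z)(S)\;\cong\;\Ext^1\bigl(A,C(S,\Z)\bigr).
\]
Thus the hypothesis is equivalent to the classical statement that $\Ext^1(A,C(S,\Z))=0$ for \emph{every} Stone space $S$. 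Taking $S=\beta X$ for a discrete set $X$ gives $C(\beta X,\Z)=B(X,\Z)$, the group of bounded $\Z$-valued functions on $X$, which is a \emph{free} abelian group by N\"obeling's theorem. Our hypothesis therefore forces $\Ext^1(A,F)=0$ for a rich family of free abelian groups $F$ of arbitrarily large cardinality, a vanishing which, unlike the bare Whitehead condition, is strong enough to detect most of the pathological non-free almost-free groups.

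To conclude, I would produce a short exact sequence $0\to K\to F\to A\to 0$ with $F=B(X,\Z)$ a N\"obeling-type free group and $K$ itself of the form $C(S',\Z)$ for some Stone space $S'$ built naturally from a generating set of $A$ and the resulting relations. Applying the section-wise vanishing from the previous paragraph to $S'$ then splits this sequence, realising $A$ as a direct summand of the free group $F$ and hence as free. The main obstacle lies precisely in producing such a free resolution with the required Stone-space form on the kernel: one must exhibit a surjection of a N\"obeling-type free group onto $A$ whose kernel is again a group of continuous $\Z$-valued functions on a Stone space, so that the condensed hypothesis applies. This geometric realisation of the resolution is the heart of the argument and the place where the condensed framework of Clausen--Scholze is essential; the $\aleph_1$-freeness from the first step is what guarantees that the resolution can be assembled coherently from countable pieces.
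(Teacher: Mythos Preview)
Your translation step is exactly right and matches the paper's Lemma~\ref{ext_calc_lemma}: for extremally disconnected $S$ one has $\iExt^1_{\CondAb}(\ul A,\ul\Z)(S)\cong\Ext^1_{\Ab}(A,C(S,\Z))$, and N\"obeling's theorem then tells you that the hypothesis forces $\Ext^1(A,F)=0$ for free abelian groups $F$ of arbitrarily large rank. Up to this point you are on the same track as the paper's alternate proof in Section~\ref{alternate_section}.

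The gap is in your final paragraph. You set yourself the task of producing a free resolution $0\to K\to F\to A\to 0$ in which the kernel $K$ is \emph{literally} of the form $C(S',\Z)$ for some Stone space $S'$, and you flag this ``geometric realisation'' as the heart of the argument and the main obstacle. But no such realisation is needed, and the obstacle is illusory. Take \emph{any} free resolution of $A$, say the canonical one with $F=\Z^{(A)}$. The kernel $K$ is a subgroup of a free abelian group, hence is itself free of some rank $\lambda\leq|A|$. Now choose an extremally disconnected $S$ of weight at least $\lambda$; by N\"obeling, $C(S,\Z)\cong\Z^{(\mu)}$ with $\mu\geq\lambda$, so $K$ is a \emph{direct summand} of $C(S,\Z)$. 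Since $\Ext^1$ takes direct sums in the second variable to direct products, $\Ext^1(A,C(S,\Z))=0$ forces $\Ext^1(A,K)=0$, which splits the resolution and exhibits $A$ as a summand of $F$, hence free. This is precisely the argument the paper gives immediately before Corollary~\ref{cor_11_2}.

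In short: you correctly isolated the two key inputs (the sectionwise formula and N\"obeling), but then manufactured a difficulty that is not there. The missing, and entirely elementary, observation is that a free group of rank $\lambda$ is a summand of any free group of rank $\geq\lambda$; once you see this, the ``condensed framework'' plays no further role beyond supplying the Ext-vanishing against large free test modules.
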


Clausen and Scholze's proof of Theorem \ref{cs_whitehead_thm} is 
quite slick and relies on a structural analysis of the
abelian subcategory of $\CondAb$ spanned by what are known as 
\emph{solid} abelian groups. In this paper, we present a more 
elementary, combinatorial proof of Theorem \ref{cs_whitehead_thm} 
that is also more \emph{constructive} in the sense that, given 
a nonfree abelian group $A$, we specify a highly natural associated 
Stone space $S$ and construct a nonzero element of 
$\iExt^1(\ul{A}, \ul{\Z})(S)$.
Our motivation for doing this is twofold. First, we seek to 
excavate some of the combinatorial and set theoretic ideas 
underlying Theorem \ref{cs_whitehead_thm} and thereby to improve 
our understanding of the mathematics surrounding Whitehead's problem, in 
both classical and condensed contexts. To this end, we prove 
the following theorem, item (2) of which strengthens Theorem 
\ref{cs_whitehead_thm}.

\begin{theorem} \label{main_thm}
    Suppose that $A$ is a nonfree abelian group and $\kappa$ is 
    the least cardinality of a nonfree subgroup of $A$. Let 
    $\B$ be the Boolean completion of the forcing to add 
    $\kappa$-many Cohen reals, and let $S$ be the Stone 
    space of $\B$. Then:
    \begin{enumerate}
        \item in $V^{\B}$, $A$ is not Whitehead; and
        \item $\iExt^1(\ul{A},\ul{\Z})(S) \neq 0$.
    \end{enumerate}
\end{theorem}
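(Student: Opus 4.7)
The plan is to first establish (2) via a direct forcing construction and then to deduce (3) and (1) by translating the resulting $\B$-name for a non-split extension into nonzero classes in $\iExt^1(\ul{A}, \ul{\Z})(S)$ and $\iExt^1(\ul{A}, \ul{\Z})(2^\kappa)$ via the dictionary (developed in earlier sections of the paper) between $\B$-names and sections of condensations over $S$. The first step is to replace $A$ by a non-free subgroup of minimal size $\kappa$, making $A$ almost free but not free of cardinality $\kappa$. This reduction is legitimate because for any $A' \subseteq A$ the long exact sequence together with $\Ext^2(A/A',\Z) = 0$ gives a surjection $\Ext^1(A,\Z) \twoheadrightarrow \Ext^1(A',\Z)$ (and an analogous surjection on internal $\iExt$), so any non-trivial extension of $A'$ lifts to a non-trivial extension of $A$. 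Fix once and for all a smooth continuous filtration $A = \bigcup_{\alpha<\kappa} A_\alpha$ by free subgroups of size less than $\kappa$; standard almost-freeness theory provides a stationary $E \subseteq \kappa$ and, for each $\alpha \in E$, an explicit finite obstruction to extending a chosen basis of $A_\alpha$ over $A_{\alpha+1}$.

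To prove (2), I would use the generic sequence $\langle g_\alpha : \alpha<\kappa\rangle$ of Cohen reals added by $\B$ to define, in $V^\B$, a symmetric $2$-cocycle $\dot c \colon A \times A \to \Z$ coding a central extension $0 \to \Z \to H \to A \to 0$. At each $\alpha \in E$, the generic real $g_\alpha$ specifies the integer values of $\dot c$ on finitely many pairs witnessing the obstruction at step $\alpha$, arranged so that each value depends on only finitely many coordinates of the Cohen generic. A standard genericity argument then precludes splittings: any $\B$-name $\dot\sigma$ for a trivialization $A \to \Z$ is supported, via the countable chain condition, on coordinates of $\B$ missing stationarily many $\alpha \in E$; the Cohen reals at these $\alpha$'s then defeat the trivialization relation at the obstructing configurations. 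Hence $\Ext^1(A,\Z) \neq 0$ in $V^\B$.

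For (3), extremal disconnectedness of $S$ (the Stone space of a complete Boolean algebra) identifies $\ul\Z(S)=C(S,\Z)$ with equivalence classes of $\B$-names for integers; consequently $\dot c$ transcribes into an honest $2$-cocycle $c\colon A\times A \to \ul\Z(S)$ defining a class $[c] \in \iExt^1(\ul{A},\ul{\Z})(S)$, and any trivialization of $[c]$ over $S$ would, by inverting the dictionary, yield a splitting in $V^\B$---contradicting (2). For (1), the fact that each $\dot c(a,b)$ is supported on finitely many Cohen coordinates means that $c$ factors through the canonical continuous surjection $\pi \colon S \to 2^\kappa$ dual to the inclusion $\mathrm{Clop}(2^\kappa) \hookrightarrow \B$, producing a class $[c'] \in \iExt^1(\ul{A},\ul{\Z})(2^\kappa)$ with $\pi^*[c']=[c]$; non-triviality descends, since a trivialization of $[c']$ would pull back to one of $[c]$.

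The principal technical obstacle is carrying out the two halves of the combinatorial argument in a mutually compatible way: the Cohen-genericity argument for (2) must use enough of the reals $g_\alpha$ to block every potential trivialization in $V^\B$, while (1) demands that each cocycle value nevertheless remain supported on finitely many coordinates so that the cocycle lifts through the Cantor space; balancing these requirements sits at the heart of the proof. A secondary but nontrivial point is to verify rigorously, under the dictionary, that condensed-theoretic splittings over $S$ or $2^\kappa$ really do correspond to $\B$-names for splittings in $V^\B$ (using extremal disconnectedness to patch local trivializations into a global one), so that (2) genuinely obstructs (1) and (3).
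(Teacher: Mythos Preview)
Your overall logical architecture---prove (2) first, transfer to (3) via the name/section dictionary, then descend to (1) by factoring through the continuous surjection $S \twoheadrightarrow 2^\kappa$---is viable and is an interesting reorganization of the paper's route, which proves (1) directly (Section~5), adapts that argument to obtain (2) (Section~6), and then derives (3) from (2) (Sections~7 or 8--9). Your reduction to an almost-free $A$ of size $\kappa$ via the surjection $\Ext^1(A,-)\twoheadrightarrow\Ext^1(A',-)$ is correct and matches the paper.

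There is, however, a genuine gap in your construction for (2). You assert that almost-freeness yields at each $\alpha\in E$ an ``explicit finite obstruction'' and that ``the generic real $g_\alpha$ specifies the integer values of $\dot c$ on finitely many pairs.'' This is not right: the obstruction at $\alpha$ is simply that the quotient $A_{\alpha+1}/A_\alpha$ is nonfree, and that quotient can have any cardinality $\kappa_\alpha<\kappa$---there is no finite witness. The paper's construction is by \emph{induction on $\kappa$}: at each $\alpha\in E$ one applies the inductive hypothesis to $A_{\alpha+1}/A_\alpha$ to obtain a continuous $\varphi_\alpha:2^{\kappa_\alpha}\to\Hom(K_\alpha^\ast,\Z)$ (for (1)) or an $\Add(\omega,\kappa_\alpha)$-name (for (2)) witnessing non-Whiteheadness of that quotient, and then spends one additional generic bit at coordinate $\alpha$ to decide whether to ``activate'' $\varphi_\alpha$. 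The base of the induction is the countable case, resting on Stein's theorem in the strengthened form of Theorem~4.2 (producing a witness with values in $\{0,1\}$ on a fixed basis of $K$). Your single-Cohen-real-per-step picture implicitly assumes the obstructions are already finitary; this is adequate only when $\kappa=\omega_1$, and for larger $\kappa$ each step is itself a full instance of the problem at a smaller cardinal.

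A secondary remark: the paper works throughout with the free-resolution description of $\Ext^1$ (homomorphisms $K\to\Z$ that fail to extend to $F$) rather than with $2$-cocycles. The two are of course equivalent, but the free-resolution language makes both the inductive bookkeeping and the identification of $\iExt^1(\ul{A},\ul{\Z})(T)$ with the cokernel of $C(T,\Hom(F,\Z))\to C(T,\Hom(K,\Z))$ (valid for \emph{any} profinite $T$, not only extremally disconnected ones, which you need for your descent to $2^\kappa$) considerably more transparent.
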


Second, using Theorem \ref{main_thm} as a starting point, we begin 
to investigate a more general connection between condensed 
mathematics and set theoretic forcing. We isolate a precise 
sense in which, given a locally compact Hausdorff space $X$, 
the condensed set $\ul{X}$ is simply an organized presentation 
of all forcing names for points in the canonical 
interpretations of $X$ in all possible set forcing extensions.
We then exhibit a correspondence between the satisfaction of 
certain simple formulas concerning a topological space 
$X$ in two settings: first, in a Boolean-valued model 
$V^{\B}$, and second in the context of the space 
$C(S,X)$ of continuous functions from the Stone space $S$ of $\B$ to $X$. This correspondence, stated in 
Theorem \ref{equivalence_thm}, allows us, e.g., to immediately 
deduce clause (2) of Theorem~\ref{main_thm} from clause (1) 
of the same theorem.

It is natural, in light of the preceding paragraph, to wonder whether there are provable implications between instances of clauses (1) and (2) of 
Theorem \ref{main_thm} for more general complete Boolean algebras $\B$.
We record several partial answers to this question.
In 
particular, in Theorem \ref{forcing_thm} we show that if 
$\B$ is a complete Boolean algebra, $S$ is its Stone space, and 
$V^{\B}$ satisfies a certain technical strengthening of the 
failure of $A$ to be Whitehead, then $\iExt^1(\ul{A}, \ul{\Z})(S) 
\neq 0$. In the ensuing discussion, we show that the converse fails 
in general.
The question of whether clause (1) more generally implies clause (2) (i.e., of whether the aforementioned technical strengthening can be dropped) remains open and is recorded as Conjecture \ref{conjecture}.

We also investigate some variations of Theorem \ref{cs_whitehead_thm}. 
First, we show that, if we restrict our attention to the subcategory 
$\CondAb_\kappa$ of $\CondAb$ for certain uncountable cardinals 
$\kappa$, then the classical independence of Whitehead's 
problem remains, while it again vanishes when $\kappa$ is a sufficiently 
large cardinal. More precisely, we prove the following theorem. 
(In what follows, given an uncountable cardinal $\kappa$, $\CondAb_\kappa$ 
denotes the category of $\kappa$-condensed sets, with the particular 
case of $\CondAb_{\omega_1}$ denoting the category of \emph{light} 
condensed abelian groups, the ambient framework for the lecture 
series ``Analytic Stacks'' delivered by Clausen and Scholze in 
2023--24 \cite{analytic_stacks}).

\begin{theorem} \label{local_theorem}
\leavevmode
    \begin{enumerate}
        \item If $\MA_{\omega_1}$ holds, then there is a nonfree abelian 
        group of size $\aleph_1$ such that 
        $\iExt^1_{\CondAb_{\omega_1}}(\ul{A}, \ul{\Z}) = 0$.
        \item It is consistent with the axioms of $\ZFC$ that 
        for every uncountable cardinal~$\kappa$, there is a nonfree 
        abelian group $A$ such that $\iExt^1_{\CondAb_\kappa}(\ul{A},\ul{\Z}) 
        = 0$.
        \item If $\kappa$ is a strongly compact cardinal, $A$ is an abelian 
        group, and 
        \[
        \iExt^1_{\CondAb_\kappa}(\ul{A},\ul{\Z}) = 0,
        \]
        then $A$ is free.
    \end{enumerate}
\end{theorem}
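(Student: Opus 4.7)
To prove clause~(3), I would argue the contrapositive, starting from the hypotheses that $\kappa$ is strongly compact and $A$ is not free. By the Magidor--Shelah compactness theorem for freeness \cite{Magidor_Shelah}, strong compactness of $\kappa$ implies that every $\kappa$-free abelian group is free; hence $A$ contains a nonfree subgroup of some cardinality $\mu < \kappa$. Let $\mu$ be minimal, so that $\mu$ is automatically regular and uncountable. Applying Theorem~\ref{main_thm} to $A$ with this $\mu$ in the role of its $\kappa$, one obtains---letting $\B$ be the Boolean completion of $\Add(\omega, \mu)$ and $S$ its Stone space---that $\iExt^1(\ul{A}, \ul{\Z})(S) \neq 0$. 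The forcing $\B$ is ccc of cardinality at most $2^\mu$, and since strong compactness of $\kappa$ entails its inaccessibility, the weight of $S$ is less than $\kappa$. Thus $S$ is a valid test space for $\CondAb_\kappa$, yielding $\iExt^1_{\CondAb_\kappa}(\ul{A}, \ul{\Z}) \neq 0$, as required.

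For clause~(1), under $\MA_{\omega_1}$ I would take a nonfree abelian group $A$ of cardinality $\aleph_1$ satisfying the strengthened Whitehead condition that $\Ext^1(A, F) = 0$ for every countable free abelian group $F$---obtainable either as an enhancement of the Shelah construction witnessing Theorem~\ref{shelah_thm}(2), or via a subsequent $\aleph_1$-length ccc iteration simultaneously splitting all relevant Ext classes (of which there are at most $\aleph_1$ many), using $\MA_{\omega_1}$. The test spaces of $\CondAb_{\omega_1}$ are the second-countable profinite sets $S$, and for each such $S$ the group $C(S, \Z)$ is a countable free abelian group (the directed colimit, along split inclusions, of $\Z^{S_n}$ where $S = \varprojlim_n S_n$ is an inverse limit of finite sets $S_n$). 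A standard computation identifies $\iExt^1(\ul{A}, \ul{\Z})(S)$ with $\Ext^1(A, C(S, \Z))$, whose vanishing then follows from the strengthened Whitehead property of $A$.

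For clause~(2), I would pursue a class-length forcing construction, iterating over a ground model of $V = L$, that at each regular uncountable $\kappa$ produces a nonfree ``$\kappa$-strongly-Whitehead'' group $A_\kappa$: namely, a $\diamond_\kappa$-driven almost-free construction in the Eklof style, followed by a $\kappa$-analog of the clause~(1) argument splitting Ext classes against all groups $C(S, \Z)$ with $S$ of weight~$< \kappa$. For singular $\kappa$, Shelah's singular compactness rules out $\kappa$-free nonfree groups of size $\kappa$, but one may take $A_\kappa := A_{\kappa^+}$, since every test space of $\CondAb_\kappa$ is also a test space of $\CondAb_{\kappa^+}$ and so $\iExt^1_{\CondAb_{\kappa^+}}$-vanishing restricts to $\iExt^1_{\CondAb_\kappa}$-vanishing. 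The principal obstacles I foresee are twofold: first, and common to all three clauses, the careful derived-functor identification of $\iExt^1(\ul{A}, \ul{\Z})(S)$ with $\Ext^1(A, C(S, \Z))$ in each of the subsites at play; and second, for clause~(2), the orchestration of the class iteration so that witnesses at all cardinals coexist in a single model, which I expect to be the deepest piece of the theorem.
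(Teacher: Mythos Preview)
Your proposal is essentially correct, and in fact it recapitulates the paper's \emph{alternative} route through Section~\ref{alternate_section} rather than the direct arguments of Section~\ref{ma_sec}. For clauses (1) and (2) the paper, like you, reduces to showing $\Ext^1_{\Ab}(A, C(S,\Z)) = 0$ for all relevant $S$: the key lemma (Lemma~\ref{ext_calc_lemma}) is the identification $\iExt^1_{\CondAb}(\ul{A},\ul{\Z})(S) = \Ext^1_{\Ab}(A,C(S,\Z))$ for $S \in \mathsf{ED}$, together with N\"obeling's theorem that $C(S,\Z)$ is free. One point of care you correctly flag: this identification is established only for extremally disconnected $S$, and in $\mathsf{Prof}_{\omega_1}$ the only such $S$ are finite. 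Fortunately, for (1) and (2) you need only the easy implication---if $\Ext^1_{\Ab}(A,C(S,\Z)) = 0$ for \emph{all} $S \in \mathsf{Prof}_\kappa$, then the map $\ul{\Hom(F,\Z)}(S) \to \ul{\Hom(K,\Z)}(S)$ is surjective on sections everywhere, hence an epimorphism of sheaves, hence $\iExt^1 = 0$---and this needs no hypothesis on $S$. The paper also gives, in Section~\ref{ma_sec}, a genuinely different proof of (1): a direct ccc forcing argument under $\MA_{\omega_1}$ that, for any fixed $S \in \mathsf{Prof}_{\omega_1}$ and any continuous $\varphi:S \to \Hom(K,\Z)$, builds a lift $\psi:S \to \Hom(F,\Z)$ generically, with Chase's condition securing the ccc; this avoids invoking the $\aleph_1$-coseparable literature at the cost of a longer combinatorial argument.

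You overestimate the depth of clause (2). The paper does not carry out a bespoke class-length iteration; it simply invokes a result of Eklof--Shelah \cite{eklof_shelah} that it is consistent that for every cardinal $\kappa$ there is a nonfree abelian group $A$ with $\Ext^1_{\Ab}(A,B) = 0$ for all $|B| \leq \kappa$, from which (2) follows immediately by the reduction above. For clause (3), your argument via Theorem~\ref{main_thm} and the inaccessibility of $\kappa$ is correct and close in spirit to the paper's, which instead runs the contrapositive through Lemma~\ref{ext_calc_lemma}: from $\iExt^1_{\CondAb_\kappa}(\ul{A},\ul{\Z}) = 0$ one gets $\Ext^1_{\Ab}(A,\Z^{(\lambda)}) = 0$ for all $\lambda < \kappa$, hence $A$ is $\kappa$-free, hence free by strong compactness. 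Your route produces a concrete witnessing $S$, while the paper's is slightly more economical.
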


Second, we extend Theorem \ref{cs_whitehead_thm} from the 
category of (discrete) abelian groups to the category of locally compact 
abelian groups; here it is useful to recall that the projective objects of the former category are exactly the free ones. In particular, we prove the following theorem:

\begin{theorem} \label{thm:LCA_intro}
  Let $A$ be an object of the category $\mathsf{LCA}$ of locally compact abelian groups. 
  Then
	\[
	  \underline{\mathrm{Ext}}^1_{\CondAb}(\ul{A},\ul{\mathbb{Z}})=0
	\]
  if and only if $A$ is projective in $\mathsf{LCA}$.
\end{theorem}
We record as well several observations by Peter Scholze showing that Theorem \ref{cs_whitehead_thm} does not extend to the category of solid abelian groups.

The structure of the paper is as follows. In Section 
\ref{condensed_sec}, we introduce the basic definitions and 
facts regarding condensed mathematics that we will need. 
In Section \ref{prelim_sec}, we recall some classical 
algebraic facts pertinent to Whitehead's problem and introduce 
their analogues in the condensed setting. In 
Section \ref{countable_sec}, we prove a~slight strengthening of Stein's aforementioned classical result \cite{stein} that all 
countable Whitehead groups are free. In Section 
\ref{unctble_sec}, we prove a precursor to Theorem \ref{main_thm} concerning the 
space $2^\kappa$, 
and, in Section \ref{cohen_sec}, we adapt that proof to establish 
clause (1) of Theorem \ref{main_thm}. From here we present two 
paths to clause (2) of Theorem \ref{main_thm}. The first, recorded 
in Section \ref{stone_sec}, is by far the shorter and more direct 
of the two. The second, comprising Sections \ref{top_spaces_sec} 
and \ref{compact_sec}, passes through a more general investigation 
into the interplay between condensed mathematics and forcing.
In Section \ref{top_spaces_sec}, a robust correspondence is 
established between 
\begin{itemize}
    \item $\B$-names for points in a certain extension of a 
    topological space $X$; and
    \item continuous functions from the Stone space $S$ of 
    $\B$ to $X$.
\end{itemize}
Section \ref{compact_sec} then establishes the aforementioned 
equivalence between the settings of $C(S,X)$ and $V^{\bb{B}}$
with respect to interpretations of certain formulas. We expect the 
results of this section to be applicable in a wide variety of 
contexts, and derive clause (2) of Theorem \ref{main_thm} as an immediate corollary.
In Section \ref{ma_sec}, we prove Theorem \ref{local_theorem}(1). 
In Section \ref{alternate_section}, we record an algebraic lemma 
converting the groups $\iExt^1_{\CondAb}(\ul{A},\ul{\Z})(S)$ into a more tractable, and suggestive, form. This lemma yields an alternative 
proof of clause (2) of Theorem \ref{main_thm} (and hence of Theorem~\ref{cs_whitehead_thm}), 
as well as clauses (2) and (3) of Theorem \ref{local_theorem}.
In Section \ref{lca_sec} we prove Theorem \ref{thm:LCA_intro} and record impediments to 
the extension of Theorem \ref{cs_whitehead_thm} to the class of solid abelian groups. We close out our work in Section \ref{conclusion} with 
a few concluding remarks and one main open question.

We have pursued wherever possible an account of our results accessible 
to readers with no more than a basic knowledge of either set theory 
or homological algebra. With this in mind, let us record here the 
places in the paper where more background knowledge 
may be required. First, some of the arguments in Section 
\ref{prelim_sec} presume some knowledge of derived functors and 
derived categories. That said, at the end of Section 
\ref{prelim_sec} we isolate an elementary combinatorial statement 
equivalent to the assertion that $\ul{A}$ is not Whitehead for 
a given abelian group $A$. It is this combinatorial statement that 
we work with for most of the remainder of the paper, so the reader 
may safely take the calculations of Section \ref{prelim_sec} 
as a black box. Sections \ref{cohen_sec}--\ref{compact_sec} 
require some basic knowledge of forcing. Both \cite{jech} 
and \cite{kunen} contain more than is needed here; we also direct 
the reader to \cite{moore_forcing}, which provides a concise 
introduction to forcing aimed at non-specialists. 
The final sections are slightly more demanding, with Section 
\ref{ma_sec} requiring a somewhat more sophisticated 
understanding of forcing and Sections \ref{alternate_section} and 
\ref{lca_sec} requiring some knowledge of homological algebra and 
condensed mathematics.


\subsection{Notation and conventions}

Our standard references for undefined notions and notations are 
\cite{jech} for set theory and \cite{weibel} for homological 
algebra.

Given a group $G$, we will sometimes slightly abuse notation and write $G$ to refer to its underlying 
set. We let $|G|$ denote the cardinality of the underlying set of $G$.
If $f$ is a function and $X \subseteq \dom(f)$, then $f[X]$ denotes 
the image of $X$ under $f$, i.e., $f[X] = \{f(x) \mid x \in X\}$.

Given a Boolean algebra $\B$, we let $\B^+$ denote the set of all nonzero elements 
of $\B$ and $S(\B)$ denote the Stone space 
of $\B$. Concretely, $S(\B)$ is the space of ultrafilters on $\B$, 
with basic clopen sets given by 
\[
  N_b := \{\mc{U} \in S(\B) \mid b \in \mc{U}\}
\]
for $b \in \B^+$.

\section{Condensed abelian groups} \label{condensed_sec}

Here we briefly introduce the category $\CondAb$ of condensed 
abelian groups, as well as the category $\Cond$ of condensed sets. 
For more details and proofs of many of the statements contained in 
this section, we refer the reader to \cite{CS1}. We let
$\mathsf{CHaus}$, $\mathsf{Prof}$, and $\mathsf{ED}$ denote the 
categories of compact Hausdorff spaces, profinite sets (i.e., 
totally disconnected compact Hausdorff spaces, also known as Stone spaces), and extremally 
disconnected compact Hausdorff spaces, respectively, noting that 
$\mathsf{ED} \subseteq \mathsf{Prof} \subseteq \mathsf{CHaus}$. 
Recall that extremally disconnected compact Hausdorff spaces are precisely 
those spaces homeomorphic to Stone spaces of complete Boolean algebras, and 
that profinite sets are precisely those spaces $S$ that can 
be represented as inverse limits $S = \varprojlim_{i \in I} S_i$, 
where $I$ is a directed partial order and each $S_i$ is a finite 
(discrete) space. Given an uncountable cardinal $\kappa$, we say 
that a profinite set $S$ is \emph{$\kappa$-small} if it can be 
represented as an inverse limit as above with $|I| < \kappa$. 
Equivalently, a profinite set $S$ is $\kappa$-small if the set 
of all of its clopen subsets (or, equivalently, its topological weight) has cardinality less than $\kappa$.\footnote{
There is some discrepancy in the terminology around this notion in 
various presentations of condensed mathematics. In \cite{CS1}, 
$\kappa$-small profinite sets are defined to be those profinite sets 
of \emph{cardinality} less than $\kappa$, while in the later \cite{CS3}, 
$\kappa$-small profinite sets are defined to be those profinite sets 
with fewer than $\kappa$-many clopen subsets. This distinction is 
immaterial for strong limit cardinals $\kappa$, but in general we find 
the latter convention more natural and thus adopt it here.} 
We let $\mathsf{Prof}_\kappa$ denote the full subcategory of 
$\mathsf{Prof}$ consisting of all $\kappa$-small profinite sets, 
and define $\mathsf{ED}_\kappa$ in the same way. We write $S\times_T S'$ for the fiber product of a pair of morphisms $f:S\to T$ and $g:S'\to T$ in any of these categories; concretely, $S\times_T S'=\{(x,y)\in S\times S'\mid f(x)=g(y)\}$.
The category of 
sets is denoted $\mathsf{Set}$, and the category of abelian groups 
is denoted $\Ab$. In any category with a terminal object, we let 
$\ast$ denote that terminal object (so, e.g., $\ast$ is the 
one-point space in $\mathsf{CHaus}$, the one-point set in 
$\mathsf{Set}$, and the one-element group in $\mathsf{Ab}$).

\begin{definition}
\label{def:kappa_cond_set}
    Let $\kappa$ be an uncountable cardinal. A \emph{$\kappa$-condensed 
    set} (resp.\ \emph{$\kappa$-condensed abelian group}) is a contravariant functor $T: \mathsf{Prof}_\kappa \ra \mathsf{Set}$ 
    (resp.\ $T: \mathsf{Prof}_\kappa \ra \mathsf{Ab}$) such that
    the following conditions hold.
    \begin{enumerate}
        \item $T(\emptyset) = \ast$.
        \item For all $S_0, S_1 \in \mathsf{Prof}_\kappa$, the natural 
        map
        \[
          T(S_0 \sqcup S_1) \ra T(S_0) \times T(S_1)
        \]
        induced by the inclusion maps of $S_0$ and $S_1$ into $S_0 \sqcup S_1$ 
        is a bijection.
        \item Suppose that $S, S' \in \mathsf{Prof}_\kappa$ and 
        $f:S' \ra S$ is a surjective continuous map. Let $p_0$ and 
        $p_1$ denote the two projections from $S' \times_S S'$ to 
        $S'$, and $p_0^*$ and $p_1^*$ their images under $T$. 
        Then the natural map
        \[
          T(S) \ra \{x \in T(S') \mid p_0^*(x) = p_1^*(x)\}
        \]
        induced by $f$ is a bijection.\footnote{In slightly more detail, 
        $f$ induces a map from $T(S)$ to $T(S')$ whose range is readily seen 
        to be contained in the set $\{x \in T(S') \mid p_0^*(x) = p_1^*(x)\}$. 
        This condition then asserts that this map is a bijection between 
        $T(S)$ and $\{x \in T(S') \mid p_0^*(x) = p_1^*(x)\}$.}
    \end{enumerate}
    More succinctly, a $\kappa$-condensed set (resp. 
    $\kappa$-condensed abelian group) is a sheaf of 
    sets (resp. sheaf of abelian groups) on the site 
    $\ast_{\kappa\text{-pro\'{e}t}}$.
    The category of $\kappa$-condensed sets is denoted $\Cond_\kappa$, 
    and the category of $\kappa$-condensed abelian groups is denoted 
    $\CondAb_\kappa$.
\end{definition}


Three sorts of choices of $\kappa$ in Definition \ref{def:kappa_cond_set} 
have predominated so far:
\begin{itemize}
    \item strong limit cardinals $\kappa$, valued for their closure properties in \cite{CS1, CS2, CS3}, with the associated categories $\Cond_\kappa$ assembling to form $\Cond$ in the fashion described just below;
    \item inaccessible cardinals $\kappa$, valued for their even stronger closure properties in the closely related and contemporaneous \emph{pyknotic} framework of Barwick and Haine \cite{barwick_haine};
    \item $\kappa=\omega_1$, valued for its relative minimality in \cite{analytic_stacks}, wherein the associated condensed sets are termed \emph{light}. We will return to this setting in Section \ref{ma_sec}. Note that $\mathsf{Prof}_{\omega_1}$ is precisely the category of totally disconnected \emph{metrizable} compact Hausdorff spaces.
\end{itemize}

Given strong limit cardinals 
$\kappa < \kappa'$, the forgetful functors $\Cond_{\kappa'} 
\ra \Cond_\kappa$ and $\CondAb_{\kappa'} \ra \CondAb_\kappa$ 
have natural left adjoints; by way of these adjoints, the categories $\Cond_\kappa$ form a direct system, where $\kappa$ ranges over the strong limit cardinals 
$\kappa$; one then defines the category 
$\mathsf{Cond}$ of condensed sets as $\mathsf{Cond} = \varinjlim_\kappa 
\Cond_\kappa$. Similarly, define the category $\mathsf{CondAb}$ of 
condensed abelian groups as $\mathsf{CondAb} = \varinjlim_\kappa 
\CondAb_\kappa$. In practice, we will slightly abuse notation and 
think of condensed sets or condensed abelian groups as contravariant 
functors defined on the entire category $\mathsf{Prof}$.
Given a condensed set/abelian group $T$, we call $T(\ast)$ the 
\emph{underlying set/abelian group} of $T$. We note that an element 
$T$ of $\Cond$ or $\CondAb$ is fully determined by its restriction 
to $\mathsf{ED}$ (cf.\ \cite[Proposition 2.7]{CS1}). 

Given a Hausdorff space $X$, one can define a condensed set 
$\ul{X}$ by letting $\ul{X}(S) = C(S,X)$, i.e., $X(S)$ is the set of 
continuous functions from $S$ to $X$ for all $S \in \mathsf{Prof}$; we will sometimes term this $\ul{X}$ the \emph{condensation} of $X$. 
This operation defines a functor from the category $\mathsf{Haus}$ of 
Hausdorff topological spaces to $\Cond$. When restricted to the full subcategory of $\mathsf{Haus}$ spanned by the 
compactly generated Hausdorff spaces, it is a fully faithful 
embedding (cf.\ \cite[Proposition 1.7]{CS1}). 

Similarly, given a topological abelian group $A$, one can define 
a condensed abelian group $\ul{A}$ by letting $\ul{A}(S) = 
C(S,A)$ (considered as an abelian group with pointwise addition) 
for all $S \in \mathsf{Prof}$. When restricted to the class 
$\mathsf{LCA}$ of locally compact abelian groups, this again 
describes a fully faithful embedding of $\mathsf{LCA}$ into 
$\mathsf{CondAb}$.

For each uncountable cardinal $\kappa$, there is an obvious 
forgetful functor from $\CondAb_\kappa$ to 
$\Cond_\kappa$, and this functor admits a left adjoint that sends 
a $\kappa$-condensed set $T$ to a $\kappa$-condensed abelian 
group denoted $\Z[T]$. Concretely, as noted in \cite[\S 2]{CS1}, 
$\Z[T]$ is the sheafification of the functor from $\mathsf{Prof}_\kappa$ 
to $\Ab$ that sends each $S \in \mathsf{Prof}_\kappa$ to the free group 
on $T(S)$. Moreover,  $\{\Z[\ul{S}] \mid S \in \mathsf{ED}_\kappa\}$ is, for any strong 
limit cardinal $\kappa$, a set 
of compact projective generators for $\CondAb_\kappa$. 
These facts straightforwardly 
propagate upward to the category $\CondAb$ of all condensed abelian 
groups.

\section{Algebraic preliminaries} \label{prelim_sec}

Given any abelian group $A$, a \emph{free resolution} of $A$ 
is a short exact sequence of the form 
\[
  0 \ra K \xrightarrow{\iota} F \xrightarrow{\pi} A \ra 0
\]
in which both $F$ and $K$ are free abelian groups. In such 
resolutions, we will often implicitly assume that $K$ is a subgroup 
of $F$ and $\iota$ is the inclusion map. We note that every 
infinite abelian group $A$ has a canonical free resolution, in which 
$F$ is the free abelian group generated by the underlying set of $A$ 
and $K$ is the kernel of the induced surjection from $F$ onto $A$. 
The elements of $K$ are therefore the formal linear combinations of 
elements of $F$ that evaluate to $0$ in~$A$.

By applying the functor $\Hom(\cdot, \Z)$ to a free resolution of $A$ and using the fact 
that $F$ and $K$ are free, and therefore Whitehead, we obtain an exact sequence of the form
\[
  0 \ra \Hom(A,\Z) \ra \Hom(F, \Z) \ra \Hom(K, \Z) \ra 
  \Ext^1(A, \Z) \ra 0
\]
We thus see that $A$ is Whitehead if and only if the map $\Hom(F, \Z) \ra \Hom(K, \Z)$ is 
surjective, i.e., if and only if every group homomorphism $\varphi:K \ra \Z$ extends to a homomorphism 
$\psi:F \ra \Z$.

We now turn our attention to the situation in the category of 
condensed abelian groups. $\CondAb$ is a symmetric monoidal category with an internal Hom-functor, which we will denote by 
$\iHom : \CondAb^{\mathrm{op}} \times \CondAb \rightarrow \CondAb$. If $A$ and $B$ are 
sufficiently nice topological groups, then $\iHom(\ul{A}, \ul{B})$ has a particularly nice 
description:

\begin{proposition}[Clausen--Scholze, {\cite[Proposition 4.2]{CS1}}] \label{ihom_prop}
  Suppose that $A$ and $B$ are Hausdorff topological abelian groups and $A$ is compactly generated. 
  Then there is a natural isomorphism of condensed abelian groups
  \[
    \iHom(\ul{A},\ul{B}) \cong \ul{\Hom(A,B)},
  \]
  where, on the right-hand side, $\Hom(A,B)$ is computed in the category of topological abelian 
  groups and endowed with the compact-open topology. 
\end{proposition}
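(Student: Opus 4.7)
The plan is to establish the isomorphism sectionwise: for each $S \in \mathsf{Prof}$ I will identify both $\iHom(\ul{A},\ul{B})(S)$ and $\ul{\Hom(A,B)}(S)$ with the set of continuous maps $S \times A \to B$ that are group homomorphisms in the second variable, and then verify naturality in $S$ and compatibility with the group structures. Once this is done the result will extend to the isomorphism in $\CondAb$ claimed in the statement.

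First I would unfold the left-hand side. Using the adjunction defining internal $\Hom$ in $\CondAb$ together with the free-forgetful adjunction $\Z[-] \dashv |\cdot|$ between $\Cond$ and $\CondAb$, I would write
\[
\iHom(\ul{A}, \ul{B})(S) \;=\; \Hom_{\CondAb}\bigl(\Z[\ul{S}] \otimes \ul{A},\, \ul{B}\bigr)
\]
and then rewrite maps out of this tensor product, via the universal property of $\Z[\ul{S}]$, as maps of condensed sets $\ul{S} \times \ul{A} \to \ul{B}$ that are $\Z$-linear in the second coordinate.

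Next I would identify this with a topological object. Since $S$ is compact Hausdorff (in particular locally compact) and $A$ is compactly generated, the product $S \times A$ is again compactly generated and the canonical comparison $\ul{S} \times \ul{A} \to \ul{S \times A}$ is an isomorphism in $\Cond$; hence, using full faithfulness of $\ul{(-)}$ on compactly generated Hausdorff spaces together with the description $\ul{B}(S \times A) = C(S \times A, B)$, the set above is identified with the collection of continuous maps $f : S \times A \to B$ such that $f(s,-) : A \to B$ is a group homomorphism for every $s \in S$. On the right-hand side, $\ul{\Hom(A,B)}(S) = C(S, \Hom(A,B))$, and I would invoke the exponential law for the compact-open topology (valid since $S$ is compact and $A$ is compactly generated) to identify a continuous map $S \to \Hom(A,B) \subseteq C(A,B)$ with a continuous map $S \times A \to B$ whose restriction to each slice is a group homomorphism. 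These two identifications produce the same subset of $C(S \times A, B)$, yielding the required bijection $\iHom(\ul{A}, \ul{B})(S) \cong \ul{\Hom(A,B)}(S)$.

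Finally I would check that the bijection is natural in $S$, which is automatic from the fact that every step above is functorial in $S$, and that it intertwines the abelian group structures (pointwise addition on both sides). The main technical obstacle is the topological bookkeeping in the middle step: verifying the product identity $\ul{S} \times \ul{A} \cong \ul{S \times A}$ and the exponential law, both of which rely crucially on the compact-generation hypothesis on $A$ (which is exactly why this hypothesis appears in the statement) together with the local compactness of profinite sets.
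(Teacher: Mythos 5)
This proposition is not proved in the paper: it is imported verbatim from Clausen and Scholze's lecture notes \cite[Proposition 4.2]{CS1}, so there is no in-paper proof to compare against. Your argument is, however, essentially the standard one and is sound. The chain of adjunctions
\[
\iHom(\ul A,\ul B)(S)\;=\;\Hom_{\CondAb}(\Z[\ul S]\otimes\ul A,\ul B)
\]
and the reduction to condensed-set maps $\ul S\times\ul A\to\ul B$ that are additive in the second slot is correct, and the identification $\ul S\times\ul A\cong\ul{S\times A}$ is automatic (it holds for any Hausdorff $S,A$, since $C(T,S\times A)=C(T,S)\times C(T,A)$ for every $T$). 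The one step deserving more care than you give it is the exponential law: in its textbook form it requires $A$ locally compact Hausdorff, not merely compactly generated. The correct reduction, which your prose gestures at but does not quite spell out, is two-staged: (i) since $S\in\mathsf{Prof}$ is compact Hausdorff (hence locally compact) and $A$ is compactly generated, the product $S\times A$ is compactly generated, so a map $\check g:S\times A\to B$ is continuous if and only if its restriction to $S\times K$ is continuous for every compact $K\subseteq A$; and (ii) for each such $K$ the restriction map $C(A,B)_{\mathrm{co}}\to C(K,B)_{\mathrm{co}}$ is continuous and $K$ is compact Hausdorff, so the classical exponential law gives continuity of $\check g|_{S\times K}$. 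This is also precisely where the hypotheses of the proposition enter, so it is worth making explicit. With that point spelled out, the proof is complete and is in line with the argument in \cite{CS1}.
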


$\iHom$ has corresponding derived functors, the internal Ext-functors 
\[
  \iExt^n : \CondAb^{\mathrm{op}} \times \CondAb \rightarrow \CondAb
\] 
for $n \geq 1$ and, at the level of the derived category, an internal $R\Hom$-functor
\[
  R\iHom : D(\CondAb)^{\mathrm{op}} \times D(\CondAb) \rightarrow D(\CondAb);
\]
The former arise, of course, as the $n^{\mathrm{th}}$ cohomology groups of the latter.
The following lemma shows that these functors behave as expected in relation to their classical analogues in 
$\Ab$; namely, the classical functors can be recovered by evaluating their condensed analogues at 
a point. When we need to be careful about the category in which certain functors are being 
considered, we will write the name of the category as a subscript on the functor, i.e., 
$\Ext^1_{\Ab}$ is the $\Ext$-functor in the category of abelian groups.

\begin{lemma}
\label{lemma:evaluation}
  For any abelian groups $A$ and $B$ (with the discrete topology),
  \[
    \iExt^1_{\CondAb}(\ul{A},\ul{B})(\ast) = \Ext^1_{\Ab}(A,B).
  \]
  More generally, for any condensed abelian groups $T$ and $M$, we have
  \[
    R\iHom_{\CondAb}(T,M)(\ast) = R\Hom_{\CondAb}(T,M),
  \]
  and, for discrete abelian groups $A$ and $B$, we in turn have
  \[
    R\Hom_{\CondAb}(\ul{A},\ul{B}) = R\Hom_{\Ab}(A,B).
  \]
\end{lemma}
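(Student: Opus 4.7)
My plan is to establish the two general identities of the lemma and then deduce the first by taking $H^1$. The whole argument hinges on a single structural observation available from Section \ref{condensed_sec}: the object $\ul{\Z} = \Z[\ul{\ast}]$ is simultaneously the monoidal unit of $\CondAb$ and a compact projective generator (since $\ast$ is extremally disconnected). Because evaluation at $\ast$ coincides with the functor $\Hom_{\CondAb}(\ul{\Z}, -)$, it is in particular exact, which will repeatedly allow derived functors to be computed ``pointwise.''

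For the identity $R\iHom(T,M)(\ast) = R\Hom_{\CondAb}(T,M)$, I would combine two facts. The closed monoidal structure gives the underived calculation
\[
  \iHom(T,M)(\ast) = \Hom_{\CondAb}(\ul{\Z} \otimes T, M) = \Hom_{\CondAb}(T, M),
\]
so global sections of $\iHom$ recover $\Hom$. And since $(-)(\ast)$ is exact, it preserves quasi-isomorphisms; hence for any injective resolution $M \to I^\bullet$ in $\CondAb$, the complex $\iHom(T, I^\bullet)(\ast) = \Hom_{\CondAb}(T, I^\bullet)$ represents both sides of the desired identity in $D(\Ab)$.

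For $R\Hom_{\CondAb}(\ul{A},\ul{B}) = R\Hom_{\Ab}(A,B)$ with $A$ and $B$ discrete, I would invoke the adjunction $\ul{(-)} \dashv (-)(\ast)$ between $\Ab$ and $\CondAb$. This adjunction is available because for discrete $A$ the condensation $\ul{A}$ is the sheafification of the constant presheaf with value $A$, and the composite $(-)(\ast) \circ \ul{(-)}$ is the identity on $\Ab$, which in particular exhibits $\ul{(-)}$ as fully faithful. Exactness of the right adjoint then gives that $\ul{(-)}$ preserves projectives. I would also verify directly that $\ul{(-)}$ is itself exact---this amounts to the observation that any set-theoretic section of a surjection $A \twoheadrightarrow A''$ of discrete groups induces, by postcomposition, a surjection $C(S,A) \twoheadrightarrow C(S,A'')$ of locally constant maps out of any profinite $S$, which gives the exactness presheaf-wise and hence sheaf-wise. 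A free resolution $P^\bullet \to A$ in $\Ab$ therefore produces a projective resolution $\ul{P^\bullet} \to \ul{A}$ in $\CondAb$, and full faithfulness gives
\[
  R\Hom_{\CondAb}(\ul{A}, \ul{B}) \simeq \Hom_{\CondAb}(\ul{P^\bullet}, \ul{B}) = \Hom_{\Ab}(P^\bullet, B) \simeq R\Hom_{\Ab}(A, B).
\]

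Combining these two identities and taking $H^1$ yields the first assertion. The main subtlety, such as it is, lies in arranging that the condensation functor $\ul{(-)}$ interacts cleanly with derived functors---namely, that it is exact and preserves projectives; both halves reduce, via the adjunction, to exactness of $(-)(\ast)$, which is in turn settled by the identification of evaluation at $\ast$ with $\Hom$ from the projective generator $\ul{\Z}$.
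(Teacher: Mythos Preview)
Your proposal is correct and follows essentially the same route as the paper: the identity $R\iHom(T,M)(\ast)=R\Hom_{\CondAb}(T,M)$ is obtained from $\Z[\ul{\ast}]=\ul{\Z}$ being the tensor unit (the paper cites \cite[proof of Corollary~4.8]{CS1} for this, while you argue it directly via exactness of $(-)(\ast)$ and an injective resolution), and the identity $R\Hom_{\CondAb}(\ul{A},\ul{B})=R\Hom_{\Ab}(A,B)$ is obtained by lifting a free resolution of $A$ along the fully faithful exact functor $\ul{(-)}$ (the paper cites \cite[Props.~2.18, 2.19]{Aparicio_condensed_21} for exactness and projectivity, while you supply both via the adjunction $\ul{(-)}\dashv(-)(\ast)$). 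The only difference is that you unpack the cited references rather than invoking them.
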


\begin{proof}
  As noted in \cite[proof of Corollary 4.8]{CS1} (and as follows from items (ii) and (iii) of its page 13),
  \[
    R\iHom_{\CondAb}(T,M)(S)=R\Hom_{\CondAb}(T\otimes\mathbb{Z}[S],M).
  \]
  for any condensed abelian groups $T,M$ and any extremally disconnected profinite set $S$.
  When $S=\ast$, our general claim follows from the facts that $\Z[*]=\ul{\Z}$ and that 
  $\ul{\Z}$ is the unit object for the condensed tensor product $\otimes$.
To see the assertion for discrete abelian groups, recall that their category fully and faithfully embeds into $\CondAb$, and observe by \cite[Props. 2.18 and 2.19]{Aparicio_condensed_21} that if $\cdots\to P^1\to P^0\to A$ is a projective (hence free) resolution of $A$ then $\cdots\to \underline{P}^1\to \underline{P}^0\to \ul{A}$ is a projective resolution of $\ul{A}$.
Writing $\mathcal{P}$ and $\underline{\mathcal{P}}$ for $\cdots\to P^1\to P^0\to 0$ and $\cdots\to \underline{P}^1\to \underline{P}^0\to 0$, respectively, we then have
\[
    R\Hom_{\CondAb}(\ul{A},\ul{B}) = \Hom_{\CondAb}(\ul{\mathcal{P}},\ul{B}) = \Hom_{\Ab}(\mathcal{P},B) = R\Hom_{\Ab}(A,B),
  \]
  as desired.
\end{proof}

It is natural at this point in the discussion to declare a \emph{condensed} abelian group $T$ to be Whitehead if $\iExt^1(T, \ul{\Z}) = 0$. Observe next that,
in close analogy with the classical situation, 
if $F$ is a free (discrete) abelian group, 
then $\ul{F}$ is Whitehead in $\CondAb$. To see this, 
let $\kappa$ be a cardinal, and let $F = \bigoplus_\kappa 
\Z$ be the free group on $\kappa$ generators; it follows from \cite[Proposition 2.19]{Aparicio_condensed_21} that $\ul{F} \cong \bigoplus_\kappa \ul{\Z}$, where the direct sum on the right is computed in $\CondAb$. By \cite[Remark 3.10 and Lemma~4.2]{Aparicio_condensed_21}, we then have
\[
  R\iHom_{\CondAb}(\ul{F}, \ul{\Z}) = \prod_\kappa 
  R\iHom_{\CondAb}(\ul{\Z},\ul{\Z}) = \prod_\kappa \ul{\Z}[0],
\]
where $\ul{\Z}[0]$ denotes the chain complex that is $\ul{\Z}$ 
in degree $0$ and $0$ elsewhere. In particular, we have 
$\iExt^1_{\CondAb}(\ul{F}, \ul{\Z}) = 0$.

One reasonable translation (though, as we will discuss later, not the only possible one) 
of Whitehead's problem to the condensed setting is then the following:
\begin{quote}
  Suppose that $A$ is a discrete abelian group and $\ul{A}$ is Whitehead in $\CondAb$. Must $A$ 
  be free?
\end{quote}
Note that, by Lemma \ref{lemma:evaluation}, the assertion that $\ul{A}$ is Whitehead in $\CondAb$ 
is a strengthening of the assertion that $A$ is Whitehead in $\Ab$; the latter is equivalent 
to asserting that $\iExt^1(\ul{A},\ul{\Z})(\ast) = 0$, whereas the former is equivalent to 
asserting that $\iExt^1(\ul{A}, \ul{\Z})(S) = 0$ for \emph{every} profinite set $S$. And, indeed, Theorem 
\ref{cs_whitehead_thm} implies that, unlike the classical Whitehead problem, this translation to 
the condensed setting has a positive answer in $\ZFC$.

In the coming sections, we will give a more concrete, combinatorial proof of Theorem \ref{cs_whitehead_thm}. 
In particular, given a nonfree discrete abelian group $A$, we will identify a natural $S \in \mathsf{ED}$ such that $\iExt^1(\ul{A},\ul{\Z})(S) \neq 0$. The process by which this will be done is 
analogous to that described in the classical setting at the beginning of this section, so in the interest of 
symmetry, we end this section by giving an outline of the structure of the argument.

Suppose that $A$ is a nonfree discrete abelian group, and let $0 \ra K \ra F \ra A \ra 0$ be a
free resolution of $A$. Using the discreteness of the groups under consideration, it is straightforward to observe that the induced sequence 
$0 \ra \ul{K} \ra \ul{F} \ra \ul{A} \ra 0$ is exact in $\CondAb$. 
Applying $\iHom(\cdot, \ul{\Z})$ to this short exact sequence yields the exact sequence
\[
  0 \ra \iHom(\ul{A}, \ul{\Z}) \ra \iHom(\ul{F}, \ul{\Z}) \ra \iHom(\ul{K}, \ul{\Z}) 
  \ra \iExt^1(\ul{A}, \ul{\Z}) \ra 0.
\]
Given an $S \in \mathsf{ED}$, the functor from $\CondAb$ to $\Ab$ sending 
$T \in \CondAb$ to $T(S) \in \Ab$ is exact (cf.\ \cite[Lemma 6.1.3]{le_stum}).
Thus, evaluating the above sequence at a space $S \in \mathsf{ED}$ and recalling Proposition \ref{ihom_prop}, we obtain
\[
  0 \ra \ul{\Hom(A,\Z)}(S) \ra \ul{\Hom(F,\Z)}(S) \ra \ul{\Hom(K,\Z)}(S) \ra 
  \iExt^1(\ul{A}, \ul{\Z})(S) \ra 0.
\]
It follows that $\iExt^1(\ul{A}, \ul{\Z})(S) = 0$ if and only if the 
map 
\[
  \ul{\Hom(F,\Z)}(S) \ra \ul{\Hom(K,\Z)}(S)
\]
is a surjection. Recall that 
$\ul{\Hom(F,\Z)}(S)$ consists of all continuous functions $\psi:S \ra \Hom(F,\Z)$ (and similarly 
for $K$), where $\Hom(F,\Z)$ is given the compact-open topology (which, since $F$ and $\Z$ are 
discrete, is simply the topology $\Hom(F,\Z)$ inherits as a closed subspace of 
$\prod_F \Z$ endowed with the product topology). Moreover, the map
$\ul{\Hom(F,\Z)}(S) \ra \ul{\Hom(K,\Z)}(S)$ sends a continuous $\psi:S \ra \Hom(F,\Z)$ to 
the map $\phi:S \ra \Hom(K,\Z)$ defined by letting $\phi(s) = \psi(s) \restriction K$ 
for all $s \in S$. 

Therefore, to show that $\ul{A}$ is not Whitehead, it will suffice to find 
a space $S \in \mathsf{ED}$ and a continuous $\varphi:S \ra \Hom(K,\Z)$ that does not ``lift pointwise" 
to a map from $S$ to $\Hom(F, \Z)$, i.e., for which there is no continuous $\psi:S \ra 
\Hom(F,\Z)$ for which $\psi(s) \restriction K = \varphi(s)$ for all $s \in S$.
The construction of such an $S$ and $\varphi$ will be the subject of the next sections. For expository reasons, and because it more closely derives from 
classical arguments around Whitehead's problem, we begin by finding $S$ and 
$\varphi$ as above but for which $S$ is merely profinite and not extremally 
disconnected. We then show how to improve these arguments to obtain 
$S \in \mathsf{ED}$, as desired.

\section{Countable groups} \label{countable_sec}

Recall that, if $A$ is a torsion-free abelian group and $X$ is a subgroup of $A$, we say that 
$X$ is a \emph{pure} subgroup of $A$ if $A/X$ is torsion-free. Given any subgroup $X$ of $A$, 
the \emph{pure closure} of $X$ in $A$ is the smallest pure subgroup of $A$ containing $X$; 
concretely, this can be seen to be equal to $\{a \in A \mid \exists n > 0 ~ [ na \in X]\}$.
The following theorem gives a useful characterization of countable free abelian groups (recall that a finitely generated abelian group is 
free if and only if it is torsion-free).

\begin{theorem} [\cite{pontrjagin} (cf.\ {\cite[Theorem 4.2]{eklof}})] \label{thm:pontrjagin}
  Suppose that $A$ is a countable torsion-free abelian group. Then $A$ is free if and only 
  if every finitely generated subgroup of $A$ is contained in a finitely generated pure 
  subgroup of $A$.
\end{theorem}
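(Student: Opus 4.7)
The plan is to prove the two directions separately; the substantive content lives in the reverse implication, via a standard chain construction.

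For the forward direction, I would fix a basis $\{e_i\}_{i \in I}$ of $A$ and note that any finitely generated subgroup $X$ lies in the finitely generated subgroup $F$ spanned by those basis vectors actually appearing in a finite generating set of $X$. That $F$ is pure in $A$ is immediate: if $n a \in F$ for some $a \in A$ and $n > 0$, expanding $a$ in the basis shows that every basis coefficient outside $F$ must vanish, so $a \in F$.

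For the reverse direction, the idea is to build, by recursion on $n < \omega$, an increasing chain of finitely generated pure subgroups $0 = X_0 \subseteq X_1 \subseteq X_2 \subseteq \cdots$ of $A$ whose union is all of $A$. Enumerating $A = \{a_n : n < \omega\}$, at stage $n+1$ I would apply the hypothesis to the finitely generated subgroup $X_n + \Z a_n$ to obtain a finitely generated pure $X_{n+1} \supseteq X_n + \Z a_n$; this ensures $a_n \in X_{n+1}$ and hence $A = \bigcup_n X_n$. The crucial step is then to show that each quotient $X_{n+1}/X_n$ is free, so that the short exact sequence
\[
0 \to X_n \to X_{n+1} \to X_{n+1}/X_n \to 0
\]
splits. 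Finite generation of $X_{n+1}/X_n$ is clear, and torsion-freeness is exactly what purity buys: if $x \in X_{n+1}$ and $kx \in X_n$ for some $k \neq 0$, then since $X_n$ is pure in $A$, $x \in X_n$. A finitely generated torsion-free abelian group is free, so the sequence splits, yielding a free complement $Y_n \leq X_{n+1}$ of finite rank with $X_{n+1} = X_n \oplus Y_n$. Inductively, $X_n = Y_0 \oplus \cdots \oplus Y_{n-1}$, and passing to the union gives $A = \bigoplus_{n < \omega} Y_n$.

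The one conceptual pressure point in the argument is the torsion-freeness of $X_{n+1}/X_n$, which is precisely the content of the purity hypothesis; the remaining ingredients (classification of finitely generated torsion-free abelian groups, splitting of short exact sequences with a free quotient, passage to the colimit) are routine, so I do not anticipate any further obstacles.
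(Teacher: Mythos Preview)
The paper does not actually prove this theorem; it is stated with a citation to Pontrjagin's original paper and to Eklof's survey, and is used as a black box in the subsequent arguments. Your proposal is correct and is essentially the standard proof one finds in those references: the forward direction via finite-rank direct summands of a free group, and the reverse direction via an exhausting chain of finitely generated pure subgroups whose successive quotients are finitely generated torsion-free, hence free, yielding a direct-sum decomposition of $A$. There is nothing to compare against in the paper itself, and no gap in your argument.
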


As mentioned above, Stein \cite{stein} proved that every countable Whitehead group is free. The 
following theorem gives one way of seeing this (and slightly more).

\begin{theorem} \label{ctbl_thm}
  Suppose that $A$ is a countable, torsion-free, nonfree abelian group,
  \[
    0 \ra K \xrightarrow{\subseteq} F \xrightarrow{\pi} A \ra 0
  \]
  is a free resolution of $A$, and $B_K$ is a basis for $K$. Then there is a~homomorphism $\varphi : K \ra \Z$ such that
  \begin{enumerate}
    \item $\varphi$ does not extend to a homomorphism $\psi:F \ra \Z$; and
    \item $\varphi[B_K] \subseteq \{0,1\}$.
  \end{enumerate}
\end{theorem}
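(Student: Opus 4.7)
The plan is to prove the theorem in two steps: first, to establish that $\Ext^1_{\Ab}(A, \Z) \neq 0$ (Stein's theorem) via Pontryagin's criterion; second, to pass from any non-extending homomorphism to a $\{0,1\}$-valued (on $B_K$) one via a simple subgroup-closure argument that works for an arbitrary basis $B_K$.

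For the second step, identify $\Hom(K, \Z) \cong \Z^{B_K}$ via $B_K$, and let $E \subseteq \Z^{B_K}$ denote the image of the restriction $\Hom(F, \Z) \to \Hom(K, \Z)$, so that $A$ is Whitehead iff $E = \Z^{B_K}$. Suppose $\{0, 1\}^{B_K} \subseteq E$. Since $E$ is closed under addition and any $\alpha \in \{0, \ldots, n\}^{B_K}$ decomposes as $\alpha = \sum_{i=1}^n \alpha_i$ with $\alpha_i(b) = 1$ iff $\alpha(b) \geq i$, we get $\{0, \ldots, n\}^{B_K} \subseteq E$. Since $\mb{1} \in \{0,1\}^{B_K} \subseteq E$, the constant sequence $n \cdot \mb{1}$ also lies in $E$, and subtracting yields $\{-n, \ldots, n\}^{B_K} \subseteq E$; taking the union over $n$ gives $E = \Z^{B_K}$. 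Contrapositively, if $\Ext^1(A, \Z) \neq 0$, some $\sigma \in \{0, 1\}^{B_K}$ yields a non-extending $\varphi := \varphi_\sigma$, establishing both (1) and (2) simultaneously.

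For the first step, invoke Pontryagin's criterion (Theorem \ref{thm:pontrjagin}) to fix a finitely generated $X \subseteq A$ not contained in any finitely generated pure subgroup, and filter $A = \bigcup_n A_n$ by finitely generated subgroups with $X \subseteq A_0$ and each $A_{n+1}/A_n$ cyclic; after prime factorization, each quotient is either $\Z$ or $\Z/p_n$ for some prime $p_n$. Infinitely many torsion steps $A_{n+1}/A_n \cong \Z/p_n$ must occur, since otherwise, for some $n_0$, $A_{n_0}$ would be pure in $A$, and $X \subseteq A_0 \subseteq A_{n_0}$ would lie in a finitely generated pure subgroup. Each $A_n$ is finitely generated free, so $\Ext^1(A_n, \Z) = 0$, and the $\varprojlim$-exact sequence gives $\Ext^1(A, \Z) \cong \varprojlim^1 \Hom(A_n, \Z)$. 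The restriction maps at torsion steps have index-$p_n$ images (imposing the condition $p_n \mid \psi(p_n a_n)$ for the new generator $a_n$), and the failure of the pure closure of $X$ to be finitely generated guarantees that these divisibility conditions accumulate down to $\Hom(A_0, \Z)$ without ever stabilizing, so that Mittag--Leffler fails and $\varprojlim^1 \Hom(A_n, \Z) \neq 0$. The main obstacle lies in this last step---the careful bookkeeping to make the Mittag--Leffler failure manifest at a single level of the filtration; alternatively, one may bypass derived limits altogether and directly construct a non-extending $\varphi'$ by diagonalization against the countable family of partial candidate extensions on the finite pieces $F_n$ of $F$, with the Pontryagin witness $X \subseteq A_0$ providing enough torsion-step latitude at late stages to block each candidate in turn.
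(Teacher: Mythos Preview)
Your Step~2 reduction has a genuine gap. The argument shows that if $\{0,1\}^{B_K}\subseteq E$ then $\{-n,\ldots,n\}^{B_K}\subseteq E$ for every $n$, but the union $\bigcup_n\{-n,\ldots,n\}^{B_K}$ is only the set of \emph{bounded} functions $B_K\to\Z$, which is a proper subgroup of $\Z^{B_K}$ whenever $B_K$ is infinite. And $B_K$ \emph{is} infinite here: if it were finite, $K$ would lie in a finitely generated direct summand $G$ of $F$, giving $A\cong (G/K)\oplus H$ with both summands free, contrary to hypothesis. So from $\{0,1\}^{B_K}\subseteq E$ you can only conclude that every $\varphi$ bounded on $B_K$ extends, not that $E=\Z^{B_K}$.

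This breaks the two-step strategy as written. Step~1, whether via $\varprojlim^1$ or otherwise, produces \emph{some} non-extending $\varphi$, but there is no reason it should be bounded on the \emph{given} (arbitrary) basis $B_K$, so Step~2 cannot be applied to it. To salvage the plan you would need Step~1 to output a bounded witness, and at that point the reduction in Step~2 does go through; but the $\varprojlim^1$ sketch does not obviously do this, and the bookkeeping you flag as the ``main obstacle'' is exactly where that control would have to come from. The paper's proof sidesteps the issue by never separating the two tasks: it builds the $\{0,1\}$-valued $\varphi$ directly by the diagonalization you allude to at the end (enumerating $\Hom(F_0,\Z)$ for a fixed finite $F_0$ coming from the Pontryagin witness, and at stage $i$ choosing a single new basis value in $\{0,1\}$ to create a divisibility obstruction for $\psi_i$). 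Your final parenthetical alternative is thus the approach that actually works, and it is essentially the paper's argument.
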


\begin{proof}
  First, fix any basis $B_F$ for the free group $F$ and notice that $B_K$ is infinite: if not, then $K$ would be included in a~subgroup $G$ of $F$ generated by a~finite $S\subset B_F$; then $F = G \oplus H$ where $H$ is generated by $B_F\setminus S$, and we get $A \cong F/K \cong (G/K) \oplus H$ where $H$ is free and $G/K$ is finitely generated torsion-free, hence free, in contradiction with the nonfreeness of $A$.

  Next, let us argue that, without loss of generality, we may assume 
  that $F$ (and hence also $K$) is countable. To see this, suppose that 
  $K$ is uncountable. Let $\chi$ be a sufficiently large regular cardinal, 
  and let $N$ be a countable elementary submodel of 
  $(H(\chi), \in, A, B_F, B_K)$. Let $F_0 := F \cap N$ and 
  $K_0 := K \cap N$. By elementarity, $F_0$ and $K_0$ are free, with 
  bases $B_F \cap N$ and $B_K \cap N$, respectively, and 
  \[
    0 \ra K_0 \xrightarrow{\subseteq} F_0 \xrightarrow{\pi \restriction 
    F_0} A \ra 0
  \]
  is a free resolution of $A$ by countable free groups. Suppose that 
  we can find a~homomorphism $\varphi_0:K_0 \ra \Z$ such that 
  $\varphi_0[B_K \cap N] \subseteq \{0,1\}$ and $\varphi_0$ does 
  not extend to a homomorphism $\psi_0 : F_0 \ra \Z$. Then we can 
  extend $\varphi_0$ to a~homomorphism $\varphi:K \ra \Z$ by setting 
  $\varphi(t) = 0$ for all $t \in B_K \setminus N$. The fact that 
  $\varphi_0$ does not extend to a~homomorphism from $F_0$ to $\Z$ 
  implies that $\varphi$ does not extend to a~homomorphism from $F$ 
  to $\Z$. Therefore, we may assume that $F$ is countable.
  
  We can thus injectively enumerate $B_K$ as $\langle t_n \mid n < \omega \rangle$ and, for each $n<\omega$, let $K_n$ denote the subgroup of $K$ generated by $\{t_k \mid k<n\}$. We will define our homomorphism $\varphi:K \ra \Z$ by recursion, requiring that $\varphi(t_n) \in \{0,1\}$ for all $n < \omega$. We will do this by constructing a~strictly increasing sequence $\langle n_i \mid i < \omega \rangle$ 
  of natural numbers and, for each $i < \omega$, a~homomorphism $\varphi_i:K_{n_i} \rightarrow 
  \Z$ in such a way that $\varphi_i[\{t_k \mid k<n_i\}]\subseteq\{0,1\}$ and, for all $i < j < \omega$, $\varphi_j\restriction K_{n_i} = \varphi_i$. In the process, we will also 
  define a $\subseteq$-increasing sequence $\langle Y_n \mid n < \omega \rangle$ of 
  finite subsets of $B_F$ such that, for all $i < \omega$, letting $F_i$ denote 
  the subgroup of $F$ generated by $Y_i$, we have $K_{n_i} \subseteq F_i$.
  
  By Theorem \ref{thm:pontrjagin}, since $A$ is countable, torsion-free, and nonfree, there is a finitely generated subgroup $X_0$ of $A$ such that its pure closure $X$ is not finitely generated. Let $Y_0$ be a~finite subset of $B_F$ such that $X_0\subseteq \pi[F_0]$ where $F_0$ denotes the subgroup of $F$ generated by $Y_0$.	Put $n_0 = 0$ and enumerate $\Hom(F_0,\Z) = \{\psi_i \mid i<\omega\}$. Finally, let $\varphi_0$ be the zero homomorphism.
	
	Assume that we have already defined $Y_i$, $F_i$, $n_i$ and $\varphi_i:K_{n_i} \rightarrow \Z$, for some $i<\omega$, in such a way that $K_{n_i}\subseteq F_i$. Consider any $z_i\in F$ such that $x_i:=\pi(z_i)\in X\setminus \pi[F_i]$.
	
	Since $X$ is the pure closure of $X_0$, there is a~positive integer $j_i>1$ such that $j_ix_i = \pi(j_iz_i)\in X_0\subseteq \pi[F_0]$. It follows that there exists $y_i\in F_0$ such that $\pi(y_i) = \pi(j_iz_i)$, whence $s_i:= j_iz_i-y_i\in K = \Ker(\pi)$. We can thus express $s_i$ as $a_i + \sum_{n_i\leq k<n_{i+1}} b_kt_k$ where $a_i\in K_{n_i}$, $n_i< n_{i+1}<\omega$ and $b_k\in \Z$ for each $k\in [n_i,n_{i+1})$. Let $Y_{i+1}$ be any finite subset of $B_F$ such that $Y_i\subseteq Y_{i+1}$ and $K_{n_{i+1}} \subseteq F_{i+1}$ where $F_{i+1}$ denotes the subgroup of $F$ generated by $Y_{i+1}$.
	
	\begin{claim}
    There is $k \in [n_i, n_{i+1})$ such that $j_i \nmid b_k$.
  \end{claim}  
  
  \begin{proof}
    Suppose not, and set
    \[
      s_i^* := s_i - a_i = \sum_{k = n_i}^{n_{i+1}-1} b_k t_k.
    \]
    Then $s_i^* \in K$ is of the form $j_iz_i - r$, where $r \in F_i$. Moreover, since all of the coefficients $b_k$ for $k \in [n_i, n_{i+1})$ are divisible by $j_i$, it follows that $s_i^*$ is divisible by $j_i$ (in $K$). Since $r = j_i z_i - s_i^*$ is in $F_i$ and $F_i$ 
    is generated by a subset of $B_F$, this means that there exists $r^*\in F_i$ such that $r = j_i r^*$. We get $s_i^* = j_i(z_i-r^*)\in K$ which implies $z_i - r^* \in K$ using the fact that $A$ is torsion-free. In particular, $\pi(z_i) = \pi(r^*)$, so $\pi(z_i) \in \pi[F_i]$, contradicting the fact that $\pi(z_i) = x_i \notin \pi[F_i]$.
  \end{proof}
  
  Fix a $k_i \in [n_i, n_{i+1})$ as given by the claim. To construct $\varphi_{i+1}$, it suffices to define $\varphi_{i+1}(t_k)$ for all $k \in [n_i, n_{i+1})$. 
  For $k \in [n_i, n_{i+1}) \setminus \{k_i\}$, simply let $\varphi_{i+1}(t_k) = 0$. To define $\varphi_{i+1}(t_{k_i})$, consider the homomorphisms $\psi_i:F_0 \rightarrow \Z$ and $\varphi_i:K_{n_i} \rightarrow \Z$ and the number
  \[
    d_i := \varphi_i(a_i) + \psi_i(y_i).
  \]
  If $d_i$ is divisible by $j_i$, then let $\varphi_{i+1}(t_{k_i}) = 1$. Otherwise, let $\varphi_{i+1}(t_{k_i}) = 0$.
  
  Now let $\varphi:=\bigcup_{i<\omega}\varphi_i$. We claim that this is as desired. Clearly, $\varphi : K \ra \Z$ is a homomorphism and $\varphi[B_K] \subseteq \{0,1\}$. We must therefore show that every homomorphism $\psi:F \ra \Z$ fails to extend $\varphi$. 
  
  To this end, fix a homomorphism $\psi:F \ra \Z$. Find $i < \omega$ such that $\psi \restriction 
  F_0 = \psi_i$. We claim that $\psi(s_i) \neq \varphi(s_i)$. Note that
  \[
    \psi(s_i) = j_i \psi(z_i) - \psi_i(y_i),
  \]
  and
  \[
    \varphi(s_i) = b_{k_i} \varphi_{i+1}(t_{k_i}) + \varphi_i(a_i).
  \]
  Therefore, if it were the case that $\psi(s_i) = \varphi(s_i)$, we would have
  \begin{align*}
    j_i \psi(z_i) &= b_{k_i} \varphi_{i+1}(t_{k_i}) + \varphi_i(a_i) 
    + \psi_i(y_i) \\ &= b_{k_i} \varphi_{i+1}(t_{k_i}) + d_i.
  \end{align*}
  Since $j_i \psi(z_i)$ is clearly divisible by $j_i$, it follows that the right hand side of this equation must also be divisible by $j_i$.
  Suppose first that $d_i$ is divisible by $j_i$. Then $\varphi_{i+1}(t_{k_i}) = 1$, so, since $k_i$ was chosen so that $b_{k_i}$ is not divisible by $j_i$, it follows that the right hand side of the above equation is not divisible by $j_i$, which is a~contradiction. 
  If, on the other hand, $d_i$ is not divisible by $j_i$, then $\varphi_{i+1}(t_{k_i}) = 0$, so the right hand side of the above equation equals $d_i$ and is hence not divisible by $j_i$, which is again a~contradiction. Therefore, it must be the case that $\psi(s_i) \neq \varphi(s_i)$.
\end{proof}

It is also well-known that all Whitehead groups are torsion-free. We now record a more general result, analogous to Theorem~\ref{ctbl_thm}, implying this fact.

 \begin{proposition} \label{prop:torsion}
   Let
   \[
     0 \ra K \xrightarrow{\subseteq} F \xrightarrow{\pi} A \ra 0
   \]
   be a short exact sequence of groups where $F$ is torsion-free and $K$ is free with a~basis~$B_K$. Assume that $A$ is not torsion-free (i.e.\ that $K$ is not a pure subgroup of $F$). Then there is a~homomorphism $\varphi:K \ra \Z$ such that
   \begin{enumerate}
     \item $\varphi$ does not extend to a homomorphism $\psi:F \ra \Z$; and
     \item there is a unique $t^* \in B_K$ such that
     \begin{itemize}
       \item $\varphi(t^*) = 1$;
       \item $\varphi(t) = 0$ for all $t \in B_K \setminus \{t^*\}$.
     \end{itemize}
   \end{enumerate}
 \end{proposition}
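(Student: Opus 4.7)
The plan is to mimic in miniature the key local step of the proof of Theorem~\ref{ctbl_thm}, exploiting a single witness to the failure of purity of $K$ in $F$. Since $A$ is not torsion-free, $K$ is not pure in $F$, so we can fix $z \in F \setminus K$ and a positive integer $j > 1$ with $jz \in K$. Expressing $jz$ uniquely in the basis $B_K$ as a finite $\Z$-linear combination
\[
  jz = \sum_{t \in B_K} c_t \cdot t,
\]
the first observation is that some coefficient $c_{t^*}$ must fail to be divisible by $j$. Indeed, if every $c_t$ were divisible by $j$, then writing $s := \sum_{t} (c_t / j)\, t \in K$ we would have $jz = js$ in $F$, whence $z = s \in K$ by torsion-freeness of $F$, contradicting the choice of $z$.

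Having fixed such a $t^* \in B_K$ with $j \nmid c_{t^*}$, I define $\varphi : K \to \Z$ on the basis by $\varphi(t^*) = 1$ and $\varphi(t) = 0$ for all $t \in B_K \setminus \{t^*\}$, and extend $\Z$-linearly. This obviously satisfies clause (2), so it remains to check (1): that $\varphi$ has no extension $\psi : F \to \Z$. But any such $\psi$ would yield
\[
  j \psi(z) = \psi(jz) = \varphi(jz) = \sum_{t \in B_K} c_t \varphi(t) = c_{t^*},
\]
forcing $j \mid c_{t^*}$, a contradiction.

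There is no real obstacle here; the whole proposition is essentially the observation that failure of purity produces a one-step witness of the kind engineered coefficient-by-coefficient in the proof of Theorem~\ref{ctbl_thm}. The only minor care needed is to invoke torsion-freeness of $F$ (not of $A$) to conclude $z = s$ from $jz = js$, which is exactly where the hypothesis on $F$ is used.
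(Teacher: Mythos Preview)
Your proof is correct. It differs from the paper's argument, however, in being explicitly constructive. The paper instead considers the canonical pure embedding $\nu:K\to\Z^{B_K}$, observes that non-purity of $K$ in $F$ prevents $\nu$ from extending to $F$, and then uses the universal property of the product to conclude that \emph{some} coordinate projection $p_{t^*}\circ\nu$ fails to extend; this $p_{t^*}\circ\nu$ is the desired $\varphi$. Your argument, by contrast, fixes a single witness $(z,j)$ to non-purity, expands $jz$ in the basis, and identifies a specific $t^*$ (one whose coefficient is not divisible by $j$) for which the projection fails to extend, together with an explicit obstruction. The paper's route is shorter and more structural; yours is more elementary, pins down the witness concretely, and makes transparent exactly where torsion-freeness of $F$ enters --- features that align well with the paper's stated goal of excavating the combinatorics underlying these results.
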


\begin{proof}
Being free, $K$ is isomorphic to the direct sum of copies of $\Z$ indexed by~$B_K$. We consider the canonical embedding $\nu : K \to \Z^{B_K}$, i.e.,\ for every $t,s\in B_K$, $\nu(t)(t) = 1$ and $\nu(t)(s) = 0$ if $s\neq t$. Then $\nu$ is pure, i.e.,\ $\nu[K]$ is pure in $\Z^{B_K}$ (in fact, $\nu$ is even an elementary embedding). Since $K$ is not pure in $F$, it follows that there does not exist any homomorphism $\mu: F \to \Z^{B_K}$ such that $\mu\restriction K = \nu$.

For each $t\in B_K$, let us denote by $p_t:\Z^{B_K} \to \Z$ the $t$-th canonical projection. Since any homomorphism into a direct product is uniquely determined by specifying its compositions with all the canonical projections, there has to be some $t^*\in B_K$ such that $\varphi:=p_{t^*}\nu$ cannot be extended to a homomorphism from $F$ into $\Z$. One readily checks that $t^*$ and $\varphi$ possess the desired properties.
\end{proof}

\begin{remark}
    In the formulation of Theorem~\ref{ctbl_thm}, we could not have hoped to find $\varphi$ with $(\varphi\restriction B_K)^{-1}\{1\}$ finite, let alone a~$\varphi$ satisfying the stronger condition (2) from Proposition~\ref{prop:torsion}. The reason is that any given countable torsion-free group~$A$, being the union of a~countable chain of finitely generated free subgroups, is isomorphic to a~(pure) subgroup of $\Z^{\omega}/\Z^{(\omega)}$, cf.\ \cite[Proposition~3.1]{Saroch} or \cite[Theorem~3.3.2]{Prest}. Here, $\Z^{(\omega)}$ denotes the direct sum of countably many copies of $\Z$, i.e., the subgroup of $\Z^\omega$ consisting of elements with finite support.
    
    Let $B = \{e_n \mid n<\omega\}$ denote the canonical basis of the free group $\Z^{(\omega)}$. Then for $A$ as above, there exists a subgroup $F$ of $\Z^{\omega}$ such that the following diagram with exact rows, where $\mu$ and $\nu$ denote the respective identity embeddings, commutes.
    \[\xymatrix{0 \ar[r] & \Z^{(\omega)} \ar[r]^-{\nu} & \Z^{\omega} \ar[r] & \Z^{\omega}/\Z^{(\omega)} \ar[r] & 0 \\ 0 \ar[r] & \Z^{(\omega)} \ar[r]^-{\subseteq} \ar@{=}[u] & F \ar[r] \ar[u]^-{\mu} & A \ar[r] \ar[u] & 0
    }\]
    
    It follows that $F$ is countable and thus free since all countable subgroups of $\Z^\omega$ are; see \cite[Theorems~IV.2.3 and IV.2.8]{Eklof_Mekler}. Since $\mu$ extends the map $\nu$, we see, in particular, that $p\mu$ extends the map $p\nu$ for any $p\in\Hom(\Z^{\omega},\Z)$. This applies specifically for any $p$ which is a~$\Z$-linear combination of the canonical projections $p_n:\Z^\omega \to \Z$, $n < \omega$. It remains to notice that any $\varphi\in\Hom(\Z^{(\omega)},\Z)$ which is nonzero only on finitely many elements from $B$ is of the form $p\nu$ for $p$ as above.

    On the other hand, since $\Z$ is a~slender group (\cite[Corollary~III.2.4]{Eklof_Mekler}), no homomorphism $\varphi:\Z^{(\omega)}\to \Z$ which is nonzero on infinitely many elements from $B$ can be extended to a~homomorphism from $\Z^\omega$ into $\Z$. Of course, it can happen that a~particular such $\varphi$ can be extended to a~homomorphism from $F$ into $\Z$. 
\end{remark}

\section{Uncountable groups} \label{unctble_sec}

In what follows, given a cardinal $\kappa$, $2^\kappa$ denotes the topological space consisting 
of all functions $f:\kappa \ra 2$, equipped with the product topology. In particular, basic clopen 
subsets of $2^\kappa$ are of the form $U_\sigma := \{f \in 2^\kappa \mid \sigma \subseteq f\}$, where 
$\sigma$ is a finite partial function from $\kappa$ to $2$. Note that $2^\kappa$ is a profinite set.

\begin{theorem} \label{thm:main_thm_1}
  Suppose that $A$ is a nonfree abelian group, and let 
  $\kappa$ be the minimal cardinality of a nonfree subgroup of $A$. 
  Suppose moreover that $0 \ra K \xrightarrow{\subseteq} F \ra A \ra 0$ is a free resolution of $A$. Then there 
  is a continuous map $\varphi:2^\kappa \ra \Hom(K, \Z)$ that cannot be lifted pointwise to a continuous 
  map $\psi:2^\kappa \ra \Hom(F, \Z)$.
  
  Moreover, given a basis $B_K$ for $K$, we can require that, for all $f \in 2^\kappa$ and all 
  $t \in B_K$, we have $\varphi(f)(t) \in \{0,1\}$.
\end{theorem}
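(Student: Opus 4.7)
The plan is to mimic the opening elementary-submodel reduction of Theorem~\ref{ctbl_thm} and then build $\varphi$ as a pointwise sum
\[\varphi(f)(t) := \sum_{\alpha \in E} f(\alpha)\,\eta_\alpha(t)\]
indexed by a stationary set $E \subseteq \kappa$, where each $\eta_\alpha : K \to \Z$ is a ``localized'' homomorphism, supported on the basis elements appearing in the $\alpha$-th slab of a $\kappa$-filtration of $F$ and obtained by applying Theorem~\ref{ctbl_thm} (or Proposition~\ref{prop:torsion}) to a countable nonfree subquotient of the resolution.

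Reduction to $|A| = \kappa$ would proceed exactly as in Theorem~\ref{ctbl_thm}: take $N \prec H(\chi)$ of cardinality~$\kappa$ containing $\{A, F, K, B_F, B_K\}$ along with a nonfree subgroup of~$A$ of size~$\kappa$, consider the free resolution $0 \to K \cap N \to F\cap N \to (A\cap N) \to 0$, and extend any suitable~$\varphi_0$ for this reduced resolution to~$\varphi$ by setting~$\varphi(f)(t)=0$ for $t \in B_K \setminus N$. By Shelah's Singular Compactness Theorem $\kappa$ is then regular; the case $\kappa = \aleph_0$ reduces to Theorem~\ref{ctbl_thm} via constant~$\varphi$, so henceforth $\kappa$ is uncountable regular. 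Choose a continuous $\kappa$-filtration $F = \bigcup_{\alpha < \kappa} F_\alpha$ by intersecting $F$ with an increasing continuous chain of elementary submodels of $H(\chi)$ of size less than~$\kappa$, so that each $F_\alpha$ is free on $B_F \cap F_\alpha$ and each $K_\alpha := K \cap F_\alpha$ free on $B_K \cap F_\alpha$, and set $A_\alpha := F_\alpha/K_\alpha$.

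By Eklof's theorem, the nonfreeness of $A$ implies that
\[E := \{\alpha < \kappa : A/A_\alpha \text{ is not } \kappa\text{-free}\}\]
is stationary in $\kappa$; thin $E$ so that for each $\alpha \in E$ there is $\beta_\alpha < \min(E \setminus (\alpha+1))$ with $A_{\beta_\alpha}/A_\alpha$ countable and nonfree. Apply Theorem~\ref{ctbl_thm} (or Proposition~\ref{prop:torsion}) to the free resolution
\[0 \to K_{\beta_\alpha}/K_\alpha \to F_{\beta_\alpha}/F_\alpha \to A_{\beta_\alpha}/A_\alpha \to 0\]
to obtain $\bar\eta_\alpha$ sending the induced basis into $\{0,1\}$ and not extending to $F_{\beta_\alpha}/F_\alpha \to \Z$; pull back to $\eta_\alpha : K \to \Z$ by composition with $K_{\beta_\alpha} \twoheadrightarrow K_{\beta_\alpha}/K_\alpha$ and extension by $0$ on $B_K \setminus K_{\beta_\alpha}$. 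Well-definedness and continuity of $\varphi$, together with $\varphi(f)[B_K] \subseteq \{0,1\}$, all follow from the sparsity of $E$ and the vanishing of each $\eta_\alpha$ on $K_\alpha$.

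For non-liftability, a continuous pointwise lift $\psi : 2^\kappa \to \Hom(F, \Z)$ would yield, for each $\alpha \in E$, an extension $\tilde\eta_\alpha := \psi(\mathbf{1}_{\{\alpha\}}) - \psi(\mathbf{0}) \in \Hom(F,\Z)$ of $\eta_\alpha$; by an appropriate modification, one would extract from $\tilde\eta_\alpha \restriction F_{\beta_\alpha}$ an extension of $\bar\eta_\alpha$ to $F_{\beta_\alpha}/F_\alpha$, contradicting the choice of $\bar\eta_\alpha$. The main technical subtlety lies exactly in this last step: with $G_\alpha$ a complement of $F_\alpha$ in $F_{\beta_\alpha}$, the required modification involves subtracting from $\tilde\eta_\alpha$ an extension of $\tilde\eta_\alpha \restriction F_\alpha$ that vanishes on the $G_\alpha$-components of elements of $K_{\beta_\alpha}$; existence of such an extension amounts to an $\Ext$-vanishing statement for $A_{\beta_\alpha}/A_\alpha$, and securing it will likely require refining the choice of $\beta_\alpha$ so that the nonfreeness of $A_{\beta_\alpha}/A_\alpha$ is witnessed compatibly with the projection structure.
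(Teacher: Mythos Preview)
Your proposal has a genuine gap at the step where you claim one can thin $E$ so that each $A_{\beta_\alpha}/A_\alpha$ is \emph{countable} and nonfree. This fails as soon as $\kappa > \aleph_1$. For instance, if $A$ is $\aleph_2$-free but not free (such groups exist in $\ZFC$), then in the standard filtration the nonfree successor quotients $A_{\alpha+1}/A_\alpha$ are themselves $\aleph_1$-free of size $\aleph_1$; every countable subgroup of such a quotient is free, so no choice of $\beta_\alpha$ will give you a countable nonfree $A_{\beta_\alpha}/A_\alpha$. More generally, the nonfreeness of an $\aleph_n$-free group only becomes visible in subquotients of size $\aleph_{n-1}$, so a direct appeal to Theorem~\ref{ctbl_thm} cannot work uniformly. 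This is exactly why the paper argues by induction on $\kappa$: at the inductive step one applies the full hypothesis to each nonfree $A_{\alpha+1}/A_\alpha$ (of size $\kappa_\alpha < \kappa$) to obtain a continuous $\varphi_\alpha : 2^{\kappa_\alpha} \to \Hom(K^*_\alpha,\Z)$ that does not lift, and then weaves the $\varphi_\alpha$ together using a block of $\kappa_\alpha$ many coordinates of $2^\kappa$ (shifted past $\alpha$), rather than a single coordinate.

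The ``technical subtlety'' you flag in the non-liftability step is real, but the fix is not a refinement of $\beta_\alpha$; it is a compactness-plus-stationarity argument. Given a putative lift $\psi$, continuity of $\psi$ together with compactness of $2^\kappa$ yields, for each basis element $z_\gamma$ of $F$, a finite set $v(\gamma) \subseteq \kappa$ such that $\psi(f)(z_\gamma)$ depends only on $f \restriction v(\gamma)$. Stationarity of the bad set then produces an $\alpha$ with $v(\gamma) \subseteq \alpha$ for all $\gamma < \alpha$. For that $\alpha$, any two $f,g \in 2^\kappa$ agreeing below $\alpha$ satisfy $\psi(f)\restriction F_\alpha = \psi(g)\restriction F_\alpha$, so the difference $\psi(f)-\psi(g)$ automatically vanishes on $F_\alpha$ and descends to $F_{\alpha+1}/F_\alpha$ without any further modification. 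In the paper this is used with $f = g_1^+$ and $g = g_0^+$ to produce a continuous lift of $\varphi_\alpha$, contradicting the inductive hypothesis; in your setup (at $\kappa=\aleph_1$, where your scheme is salvageable) the same argument applied to $\mathbf{1}_{\{\alpha\}}$ and $\mathbf{0}$ would make $\tilde\eta_\alpha$ descend directly, with no need for the $G_\alpha$-complement gymnastics or any $\Ext$-vanishing input.
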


\begin{proof}
  If $A$ has nonzero torsion elements, then Proposition \ref{prop:torsion} provides us 
  with a homomorphism $\tau \in \Hom(K,\Z)$ such that $\tau[B_K] \subseteq \{0,1\}$ 
  and $\tau$ cannot be lifted to an element of $\Hom(F,\Z)$. We can in this case let 
  $\varphi:2^\kappa \ra \Hom(K,\Z)$ be the constant map taking value $\tau$. Thus, assume 
  for the rest of the proof that $A$ is torsion-free.

  The proof will be by induction on $\kappa$. Note that, by Shelah's singular compactness theorem \cite{shelah_singular_compactness}, $\kappa$ must 
  be regular.  Let $B_F$ and $B_K$ be 
  bases for $F$ and $K$, respectively, and note that $|B_K| \leq |B_F|$. Let $\chi$ be a~sufficiently large regular cardinal, so that all objects of interest are in $H(\chi)$, and let 
  $\vartriangleleft$ be a fixed well-ordering of $H(\chi)$.
  
  We first claim that we may assume both that $A$ is \emph{almost free}, 
  i.e., that $\kappa = |A|$, and that $|F| = \kappa$. To see this, let $N$ be an elementary submodel of $(H(\chi),\in, \vartriangleleft,$ $ A,B_F,B_K)$ 
  with $|N| = \kappa \subseteq N$. Let $A_0 := A \cap N$. By elementarity, $N$ contains as 
  an element a nonfree subgroup of $A$ of cardinality $\kappa$. Since $\kappa \subseteq N$, 
  this subgroup is a subset of $A_0$; therefore, $A_0$ is nonfree, and, by the minimality of 
  $\kappa$, almost free. Let $F_0 := F \cap N$ and $K_0 := K \cap N$; by elementarity, 
  $B_{F,0} := B_F \cap N$ and $B_{K,0} := B_K \cap N$ are bases for $F_0$ and $K_0$, 
  respectively, and $0 \ra K_0 \ra F_0 \ra A_0 \ra 0$ is a free resolution of $A_0$. Moreover, we clearly have $|F_0| = \kappa$.
  
  Suppose that we can find a continuous $\varphi_0 : 2^{\kappa} \ra 
  \Hom(K_0,\Z)$ that does not lift pointwise to a continuous map $\psi_0 : 2^{\kappa} \ra 
  \Hom(F_0, \Z)$, and assume moreover that, for all $f \in 2^{\kappa}$ and all 
  $t \in B_{K,0}$, we have $\varphi_0(f)(t) \in \{0,1\}$. 
  Then $\varphi_0$ can be extended pointwise to a continuous map 
  $\varphi : 2^\kappa \ra \Hom(K,\Z)$ by setting 
  \[
    \varphi(f)(t) = \begin{cases}
      \varphi_0(f)(t) & \text{if } t\in B_{K,0} \\
      0 & \text{otherwise}
    \end{cases}
  \]
  for all $t \in B_K$ and extending linearly. The continuity of $\varphi$ follows from the continuity of $\varphi_0$, and our construction 
  ensures that $\varphi(f)(t) \in \{0,1\}$ for all $f \in 2^\kappa$ and all $t \in B_K$. Moreover, $\varphi$ does not lift pointwise to 
  a continuous $\psi: 2^\kappa \ra \Hom(F,\Z)$, as pointwise restriction 
  of such a $\psi$ to $F_0$ would induce a continuous pointwise 
  extension $\psi_0:2^\kappa \ra \Hom(F_0, \Z)$ of $\varphi_0$.

  Thus, for the rest of the proof we will assume that $A$ is almost free 
  and $|F| = \kappa$. The case of $\kappa = \aleph_0$ is covered by 
  Theorem \ref{ctbl_thm}, so we may assume that $\kappa > \aleph_0$.
  Let $\vec{M} = \langle M_\alpha \mid \alpha < \kappa \rangle$ be a $\in$-increasing, continuous sequence of 
  elementary submodels of $(H(\chi), \in, \vartriangleleft, A,F,K)$ such that 
  $|M_\alpha| < \kappa$ and $\delta_\alpha := M_\alpha \cap \kappa \in \kappa$ for all $\alpha < 
  \kappa$. For each $\alpha < \kappa$, let $A_\alpha := A \cap M_\alpha$, $F_\alpha := F \cap 
  M_\alpha$, and $K_\alpha := K \cap M_\alpha$. Since $A$, $F$, and $K$ each have size at most 
  $\kappa$ and $\vec{M}$ is $\in$-increasing, we have $A = \bigcup_{\alpha < \kappa} A_\alpha$, 
  $F = \bigcup_{\alpha < \kappa} F_\alpha$, and $K = \bigcup_{\alpha < \kappa} K_\alpha$.

  Let $E$ be the set of $\alpha < \kappa$ such that there exists 
  a $\beta$ such that $\alpha < \beta < \kappa$ and $A_\beta / A_\alpha$ 
  is nonfree. Note that, if $\alpha < \beta < \beta' < \kappa$ and 
  $A_\beta/A_\alpha$ is nonfree, then also $A_{\beta'}/A_\alpha$ 
  is nonfree, since it contains $A_\beta/A_\alpha$ as a subgroup.
  We claim that $E$ is stationary. If not, then there exists a club 
  $D \subseteq \kappa$ such that, for all $\alpha < \beta$, both 
  in $D$, the group $A_\beta/A_\alpha$ is free. Moreover, since 
  $A$ is almost free, we know that $A_{\min(D)}$ is free. Therefore, 
  it is straightforward to build a basis for $A$ by recursion 
  along $D$ (cf.\ \cite[Theorem 5.3]{eklof}), contradicting the assumption 
  that $A$ is not free.

  For all $\alpha \in E$, let $\alpha < \beta_\alpha < \kappa$ be 
  such that $A_{\beta_\alpha}/A_\alpha$ is not free. Let 
  $D := \{\gamma < \kappa \mid \forall \alpha < \gamma \;[\beta_\alpha < 
  \gamma]\}$. Then $D$ is a club in $\kappa$. Letting 
  $\langle \gamma_\eta \mid \eta < \kappa \rangle$ be the increasing 
  enumeration of $D$, the following two facts are immediate:
  \begin{enumerate}
      \item $\{\eta < \kappa \mid \gamma_\eta \in E\}$ is stationary 
      in $\kappa$;
      \item for all $\eta < \kappa$ for which $\gamma_\eta \in E$, 
      the set $A_{\gamma_{\eta+1}}/A_{\gamma_\eta}$ is not free.
  \end{enumerate}
  Therefore, by thinning out $\vec{M}$ to only include those models 
  indexed by elements of $D$ and then reindexing, we may 
  assume that the set
  \[
    S := \{\alpha < \kappa \mid A_{\alpha+1}/A_\alpha \text{ is nonfree}\}
  \]
  is stationary in $\kappa$.
  
  For each $\alpha \in S$, let $A_\alpha^* := A_{\alpha+1}/A_\alpha$, $F_\alpha^* := 
  F_{\alpha+1}/F_\alpha$, and $K_\alpha^* := K_{\alpha+1}/K_\alpha$. Similarly, let 
  $B_{F,\alpha}^* := B_F \cap (M_{\alpha+1} \setminus M_\alpha)$ and $B_{K,\alpha}^* := 
  B_K \cap (M_{\alpha+1} \setminus M_\alpha)$. Note that each $F_\alpha^*$ and $K_\alpha^*$ is free, with bases given by $\{z + F_\alpha \mid z \in 
  B_{F,\alpha}^*\}$ and $\{t + K_\alpha \mid t \in B_{K,\alpha}^*\}$, respectively. Also, 
  our free resolution of $A$ induces a free resolution
  \[
    0 \ra K_\alpha^* \ra F_\alpha^* \ra A_\alpha^* \ra 0
  \]
  of $A_\alpha^*$. Since $\alpha \in S$, we know that $A_\alpha^*$ is nonfree. Let $\kappa_\alpha := 
  |A_\alpha^*| < \kappa$, and apply the inductive hypothesis to find a continuous 
  $\varphi_\alpha : 2^{\kappa_\alpha} \ra \Hom(K_\alpha^*,\Z)$ that does not lift pointwise 
  to a continuous $\psi_\alpha : 2^{\kappa_\alpha} \ra \Hom(F_\alpha^*, \Z)$. Note that we 
  are slightly abusing terminology here, since $K_\alpha^*$ is not formally a subgroup of 
  $F_\alpha^*$. Nonetheless, we identify $K_\alpha^*$ with the subgroup $\{r + F_\alpha \mid 
  r \in K_{\alpha+1}\}$, so our assumption is formally that there is no continuous 
  $\psi_\alpha:2^{\kappa_\alpha} \ra \Hom(F_\alpha^*,\Z)$ such that, for all 
  $f \in 2^{\kappa_\alpha}$ and all $r \in K_{\alpha+1}$, we have $\psi_\alpha(f)(r + F_\alpha) = 
  \varphi_\alpha(f)(r + K_\alpha)$.
  
  We are now ready to describe the construction of our desired continuous map $\varphi:2^\kappa 
  \rightarrow \Hom(K,\Z)$. We first need a bit of notation: given $f \in 2^\kappa$ and 
  $\alpha \in S$, define a map $f_\alpha \in 2^{\kappa_\alpha}$ by letting $f_\alpha(\eta) = 
  f(\alpha + 1 + \eta)$ for all $\eta < \kappa_\alpha$. Now, to define $\varphi$, it suffices 
  to specify $\varphi(f)(t)$ for all $f \in 2^\kappa$ and $t \in B_K$, and then extend each 
  $\varphi(f)$ linearly to all of $K$. To do this, suppose we are given $f \in 2^\kappa$ 
  and $t \in B_K$. If there is $\alpha \in S$ such that $f(\alpha) = 1$ and $t \in B^*_{K,\alpha}$,\footnote{Note that such an $\alpha$ must be unique, if it exists.} then let
  \[
    \varphi(f)(t) = \varphi_\alpha(f_\alpha)(t + K_\alpha).
  \]
  In all other cases, let $\varphi(f)(t) = 0$.
  
  The continuity of $\varphi:2^\kappa \ra \Hom(K,\Z)$ follows immediately from the continuity of 
  $\varphi_\alpha$ for each $\alpha \in S$. Moreover, our construction, and the analogous fact 
  about each $\varphi_\alpha$, ensures that $\varphi(f)(t) \in \{0,1\}$ for each 
  $f \in 2^\kappa$ and $t \in B_K$. It thus remains to show that $\varphi$ cannot be lifted pointwise 
  to a continuous $\psi:2^\kappa \ra \Hom(F,\Z)$.
  
  Suppose for the sake of contradiction that $\psi:2^\kappa \ra \Hom(F,\Z)$ is a continuous map 
  such that $\psi(f) \restriction K = \varphi(f)$ for all $f \in 2^\kappa$. 
  Let $\{z_\gamma \mid \gamma < \kappa\}$ be the $\vartriangleleft$-least injective enumeration of 
  $B_F$, and note that, for all $\alpha < \kappa$, 
  $\{z_\gamma \mid \gamma < \delta_\alpha\}$ is a basis for $F_\alpha$. Temporarily fix 
  $\gamma < \kappa$. By the continuity of $\psi$, for every $f \in 2^\kappa$ there exists a finite 
  $v(f,\gamma) \subseteq \kappa$ such that $\psi(g)(z_\gamma) = \psi(f)(z_\gamma)$ for all 
  $g \in U_{f \restriction v(f,\gamma)}$. By the compactness of $2^\kappa$, there is a finite 
  $\mc F_\gamma \subseteq 2^\kappa$ such that $2^\kappa = \bigcup_{f \in \mc F_\gamma} U_{f 
  \restriction v(f,\gamma)}$. Then, letting $v(\gamma) := \bigcup_{f \in \mc F_\gamma} v(f,\gamma)$, 
  it follows that, for all $f,g \in 2^\kappa$, if $f \restriction v(\gamma) = g \restriction v(\gamma)$, 
  then $\varphi(f)(z_\gamma) = \varphi(g)(z_\gamma)$.
  
  By the stationarity of $S$ and the fact that $\kappa$ is a regular, uncountable cardinal, we can 
  fix $\alpha \in S$ such that $\delta_\alpha = \alpha$ and $v(\gamma) \subseteq \alpha$ for all 
  $\gamma < \alpha$. We will complete the proof by constructing a continuous map 
  $\psi_\alpha : 2^{\kappa_\alpha} \rightarrow \Hom(F^*_\alpha, \Z)$ that lifts $\varphi_\alpha$ 
  pointwise, contradicting our choice of $\varphi_\alpha$.
  
  We first need a bit more notation. Given a function $g \in 2^{\kappa_\alpha}$ and an $i < 2$, define 
  a function $g^+_i \in 2^\kappa$ as follows. First, let $g^+_i(\alpha) = i$. Then, for all 
  $\eta < \kappa_\alpha$, let $g^+_i(\alpha + 1 + \eta) = g(\eta)$. Finally, let 
  $g^+_i(\beta) = 0$ for all $\beta \in \kappa \setminus [\alpha, \alpha + \kappa_\alpha)$.
  Note that, for $i < 2$, we have $(g^+_i)_\alpha = g$.
  
  As usual, to define $\psi_\alpha$, it suffices to specify $\psi_\alpha(g)(z + F_\alpha)$ for 
  all $g \in 2^{\kappa_\alpha}$ and $z \in B^*_{F,\alpha}$. To this end, given such $g$ and $z$, let 
  \[
    \psi_\alpha(g)(z + F_\alpha) = \psi(g^+_1)(z) - \psi(g^+_0)(z).
  \]
  
  We will be done if we show that $\psi_\alpha(g)(t + F_\alpha) = \varphi_\alpha(g)(t + K_\alpha)$ 
  for all $g \in 2^{\kappa_\alpha}$ and $t \in B^*_{K,\alpha}$. To this end, fix such $g$ and $t$. 
  Let $B_{F,\alpha} := \{z_\gamma \mid \gamma < \alpha\} = B_F \cap M_\alpha$, and note that 
  $B_{F,\alpha} \cup B^*_{F,\alpha}$ is a basis for $F_{\alpha+1}$.
  Since $t \in K_{\alpha+1} \subseteq F_{\alpha+1}$, we can find finite subsets $I^- \subseteq B_{F,\alpha}$ 
  and $I^+ \subseteq B^*_{F,\alpha}$ together with integer coefficients $\{a_z \mid z \in I^- \cup 
  I^+\}$ such that, in $F_{\alpha+1}$, we have
  \[
    t = \sum_{z \in I^- \cup I^+} a_z z.
  \]
  Let
  \[
    t^- := \sum_{z \in I^-} a_z z \text{\ \ and \ } t^+ := \sum_{z \in I^+} a_z z.
  \]
  The following observations are immediate, but crucial:
  \begin{itemize}
    \item $t = t^- + t^+$;
    \item $t^- \in F_\alpha$;
    \item $t^+$ is a linear combination of elements of $B^*_{F,\alpha}$; in consequence, we have
    $\psi_\alpha(g)(t^+ + F_\alpha) = \psi(g_1^+)(t^+) - \psi(g_0^+)(t^+)$.
  \end{itemize}
  Since $g_1^+ \restriction \alpha = g_0^+ \restriction \alpha$, our choice of $\alpha$ ensures that 
  $\psi(g_1^+) \restriction F_\alpha = \psi(g_0^+) \restriction F_\alpha$, and hence in particular 
  $\psi(g_1^+)(t^-) = \psi(g_0^+)(t^-)$. But now, putting all of this together we obtain
  \begin{align*}
    \psi_\alpha(g)(t + F_\alpha) &= \psi_\alpha(g)(t^+ + F_\alpha) \\ 
    &= \psi(g_1^+)(t^+) - \psi(g_0^+)(t^+) \\
    &= \psi(g_1^+)(t^-) + \psi(g_1^+)(t^+) - (\psi(g_0^+)(t^-) + \psi(g_0^+)(t^+)) \\ 
    &= \psi(g_1^+)(t) - \psi(g_0^+)(t) \\ 
    &= \varphi(g_1^+)(t) - \varphi(g_0^+)(t) \\ 
    &= \varphi_\alpha(g)(t + K_\alpha) - 0 = \varphi_\alpha(g)(t + K_\alpha).
  \end{align*}
  In the above equation, the first equality holds because $t = t^- + t^+$ and $t^- \in F_\alpha$, 
  and hence $t + F_\alpha = t^+ + F_\alpha$. The second follows from the definition of 
  $\psi$, the third from the observation above that $\psi(g_1^+)(t^-) = \psi(g_0^+)(t^-)$, 
  the fourth by linearity, the fifth from the assumption that $\psi$ lifts $\varphi$ pointwise, 
  and the sixth from the definition of $\varphi$, noting that $g^+_i(\alpha) = i$ for $i < 2$.
  This shows that $\psi_\alpha$ lifts $\varphi_\alpha$ pointwise, furnishing the desired contradiction 
  and completing the proof.  
\end{proof}

\section{Cohen forcing} \label{cohen_sec}
Given an infinite set $X$, we let $\Add(\omega, X)$ denote the forcing to add 
$|X|$-many Cohen reals. Concretely, $\Add(\omega, X)$ consists of all finite partial 
functions from $X$ to $\{0,1\}$, ordered by reverse inclusion.

In this section, we will modify the arguments of the previous section to prove the following theorem:

\begin{theorem} \label{thm:cohen_forcing}
  Suppose that $A$ is a nonfree abelian group and $\kappa = |A|$. Then, in $V^{\Add(\omega, \kappa)}$, 
  $A$ is not Whitehead.
\end{theorem}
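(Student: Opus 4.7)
The strategy is to adapt the proof of Theorem \ref{thm:main_thm_1} to the forcing setting: where the earlier argument used continuous maps $2^\kappa \to \Hom(K, \Z)$, here we use names in $\Add(\omega,\kappa)$ for homomorphisms $K \to \Z$, with the countable chain condition of $\Add(\omega,\kappa)$ playing the role previously played by compactness of $2^\kappa$. The reductions of Theorem \ref{thm:main_thm_1} transfer readily to the extension: the torsion case is handled by applying Proposition \ref{prop:torsion} in $V[G]$, and we may otherwise assume that $A$ is torsion-free and almost free with $|F| = \kappa$. The argument proceeds by induction on $\kappa$; the base case $\kappa = \aleph_0$ follows from Theorem \ref{ctbl_thm} applied in $V^{\Add(\omega,\omega)}$, since that theorem is itself provable in $\ZFC$.

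For the inductive step $\kappa > \aleph_0$, construct the filtration $\langle M_\alpha : \alpha < \kappa \rangle$ and stationary set $S = \{\alpha < \kappa : A^*_\alpha \text{ is nonfree}\}$ exactly as in the proof of Theorem \ref{thm:main_thm_1}. Assign to each $\alpha \in S$ a pairwise disjoint ``block'' $B_\alpha \subseteq \kappa$ of size $\kappa_\alpha$ together with a ``switch coordinate'' $s_\alpha \in \kappa$, where the $s_\alpha$ are all distinct and disjoint from each $B_\alpha$. Applying the inductive hypothesis to $A^*_\alpha$ (working in an intermediate Cohen extension avoiding the $s_\alpha$) furnishes, for each $\alpha \in S$, a name $\dot\varphi^*_\alpha$ with forcing support in $B_\alpha$ for a homomorphism $K^*_\alpha \to \Z$ that is forced not to extend to any homomorphism $F^*_\alpha \to \Z$ in $V[G]$. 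Then define a name $\dot\varphi$ in $\Add(\omega,\kappa)$ for a homomorphism $K \to \Z$ by setting, for $t \in B^*_{K,\alpha}$, $\dot\varphi(t) = \dot\varphi^*_\alpha(t + K_\alpha)$ if $\dot g(s_\alpha) = 1$ and $\dot\varphi(t) = 0$ otherwise, and extending linearly.

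To show that $\dot\varphi$ is forced not to extend, suppose $\dot\psi$ names a homomorphism $F \to \Z$ in $V[G]$ extending $\dot\varphi$. The $ccc$-ness of $\Add(\omega, \kappa)$ provides, for each basis element $z_\gamma$ of $F$, a countable maximal antichain deciding $\dot\psi(z_\gamma)$ supported on a countable $w(\gamma) \subseteq \kappa$. A closure argument together with the stationarity of $S$ and a density appeal produces some $\alpha \in S$ such that $g(s_\alpha) = 1$ and $s_\alpha \notin w(\gamma)$ for every $\gamma$ with $z_\gamma \in F_\alpha$. The automorphism $\sigma$ of $\Add(\omega,\kappa)$ flipping coordinate $s_\alpha$ leaves both $\dot\varphi^*_\alpha$ (supported in $B_\alpha$) and $\dot\psi \restriction F_\alpha$ unchanged, so the difference $\dot\psi[G] - \sigma(\dot\psi)[G]$ vanishes on $F_\alpha$ and descends to a homomorphism $F^*_\alpha \to \Z$ in $V[G]$ whose restriction to $K^*_\alpha$ equals $\varphi^*_\alpha$, contradicting the inductive choice of $\dot\varphi^*_\alpha$.

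The main obstacle is ensuring that the non-extendability given by the inductive hypothesis holds not merely in the smaller forcing extension $V^{\Add(\omega,B_\alpha)}$ but in the full extension $V[G]$; this requires strengthening the inductive statement (to assert non-extendability in $V^{\Add(\omega,\lambda)}$ for all $\lambda \geq |A|$) and applying it in an appropriate intermediate Cohen extension so that $\Add(\omega,B_\alpha)$ absorbs into $\Add(\omega,\kappa)$ via the standard product decomposition of Cohen forcing.
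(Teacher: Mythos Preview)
Your approach is essentially the same as the paper's, and it is correct, though the paper packages the induction slightly differently. The paper also proves the result by induction on $|A|$, with the same filtration, stationary set $S$, and automorphism-flipping argument; the only substantive difference is in the inductive strengthening. Where you strengthen to ``$\varphi$ does not extend in $V^{\Add(\omega,\lambda)}$ for all $\lambda \geq |A|$,'' the paper strengthens further to ``$\varphi$ does not extend in any ccc forcing extension of $V^{\Add(\omega,\kappa)}$'' (this is Theorem~\ref{thm:cohen}). Both strengthenings are strong enough to close the induction for the purpose of Theorem~\ref{thm:cohen_forcing}; the paper's is more general and is what is actually needed later in Theorem~\ref{forcing_thm}. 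On the organizational side, the paper avoids your explicit block-and-switch bookkeeping by using the isomorphism $\Add(\omega,\kappa) \cong \Add(\omega,\kappa) \times \Add(\omega,\kappa)$: the first factor plays the role of all your $B_\alpha$ at once (producing the $\varphi_\alpha$ as actual homomorphisms in an intermediate model $V_1$ rather than as names), and the ordinal $\alpha$ itself serves as the switch coordinate in the second factor. The paper also works with single conditions in a fixed generic deciding $\dot\psi(z_\gamma)$ (finite supports) rather than countable maximal antichains, which makes the closure argument slightly more direct. Your version goes through once you stipulate that the switches $s_\alpha$ are arranged so that a standard closing-off argument applies (e.g., $s_\alpha \geq \alpha$ under a suitable identification), a point you gesture at but do not make explicit.
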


Note that this establishes clause (1) of Theorem \ref{main_thm}. Before proving Theorem \ref{thm:cohen_forcing}, we make the following observation.

\begin{lemma} \label{lem:cohen_preservation}
  Suppose that $\P$ is a ccc forcing notion and $A$ is an 
  abelian group. Then $A$ is free in $V$ if and only if $A$ is free in $V^{\P}$.
\end{lemma}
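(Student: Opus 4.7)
The forward direction will be immediate: if $B \in V$ is a basis for $A$, then the assertion ``every element of $A$ has a unique finite $\Z$-linear expression from $B$'' is absolute between $V$ and $V^{\P}$, so $B$ remains a basis. The plan for the nontrivial backward direction is to prove, by induction on the cardinal $\lambda$, the stronger statement that for \emph{every} abelian group of size at most $\lambda$, freeness is absolute between $V$ and $V^{\P}$.

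For the base case $\lambda = \aleph_0$, if a countable group has torsion then nonfreeness is immediate in both models since free abelian groups are torsion-free; if the group is torsion-free and nonfree, then Pontryagin's criterion (Theorem~\ref{thm:pontrjagin}) supplies a finitely generated subgroup whose pure closure is not finitely generated, and both assertions concern the fixed countable group and are therefore absolute. For the inductive step I would assume $A$ is nonfree in $V$ with $|A| = \lambda > \aleph_0$ and let $A_0 \subseteq A$ be a nonfree subgroup of least cardinality $\kappa$. If $\kappa < \lambda$, then the inductive hypothesis applied directly to $A_0$, combined with the fact that subgroups of free abelian groups are free, yields that $A$ is nonfree in $V^{\P}$. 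So assume $\kappa = \lambda$; by Shelah's singular compactness theorem \cite{shelah_singular_compactness} $\lambda$ is regular, and by minimality of $\kappa$ every subgroup of $A_0$ of size less than $\lambda$ is free.

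I would then fix a continuous filtration $A_0 = \bigcup_{\alpha < \lambda} A_\alpha$ by (free) subgroups of size less than $\lambda$ and consider
\[
  S := \{\alpha < \lambda \mid A_{\alpha+1}/A_\alpha \text{ is not free}\}.
\]
The stationarity argument reproduced in the proof of Theorem~\ref{thm:main_thm_1} shows that $S$ is stationary in $\lambda$ in $V$. By the inductive hypothesis applied to each quotient (a group of size less than $\lambda$), $S$ is computed identically in $V^{\P}$; and since $\P$ is ccc, $\lambda$ remains a regular uncountable cardinal in $V^{\P}$, and ccc forcing preserves stationary subsets of such cardinals, so $S$ is stationary in $V^{\P}$ as well. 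The converse direction of the same standard argument (attempting to build a basis for $A_0$ along any club disjoint from $S$) then yields that $A_0$, and hence $A$, is nonfree in $V^{\P}$.

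The main obstacle is not any one step but the bookkeeping of the induction: one must verify that the inductive hypothesis genuinely suffices to identify $S$ in both models, and that the filtration characterization of freeness transfers cleanly from $V$ to $V^{\P}$ by way of ccc preservation of stationarity at regular uncountable cardinals. The individual ingredients used --- Pontryagin's criterion, singular compactness, and the stationary-set preservation theorem --- are all standard and are either stated earlier in the paper or available in \cite{Eklof_Mekler, jech}.
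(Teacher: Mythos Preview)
Your proof is correct and follows essentially the same route as the paper: induction on $|A|$, Pontryagin's criterion for the countable base case, reduction to the almost-free case (the paper does this directly, you via a minimal nonfree subgroup and singular compactness), and then preservation under ccc forcing of the stationary set $S$ coming from a filtration, using the inductive hypothesis on the quotients $A_{\alpha+1}/A_\alpha$. The only quibble is notational: you use $A_0$ both for the minimal nonfree subgroup and, a moment later, as the index-$0$ term of its own filtration, which should be disambiguated.
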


\begin{proof}
  If $A$ is free in $V$, then it clearly remains free in $V^{\P}$, and if $A$ is not torsion-free, then it is 
  nonfree in both models. Therefore, we may assume that $A$ is torsion-free and nonfree in $V$, and 
  we will show that $A$ remains nonfree in $V^{\P}$. 
  
  The proof is by induction on $|A|$. Suppose first that $A$ is countable. Then, by Theorem 
  \ref{thm:pontrjagin}, $A$ has a finitely generated subgroup $X$ whose pure closure is not finitely 
  generated. Since $A$ has the same finitely generated subgroups in $V$ and in $V^{\P}$, and 
  since a subgroup of $A$ is pure in $V$ if and only if it is pure in $V^{\P}$, $X$ retains these 
  properties in $V^{\P}$, so, again by Theorem \ref{thm:pontrjagin}, $A$ is nonfree in $V^{\P}$.
  
  Now suppose that $|A| = \kappa > \aleph_0$. First, note that we may assume that $A$ is almost 
  free. Indeed, if $A_0$ is a nonfree subgroup of $A$ and $|A_0| < \kappa$, then we can 
  apply the inductive hypothesis to conclude that $A_0$, and hence also $A$, remains 
  nonfree in $V^{\P}$.
  
  Thus, assume that $A$ is almost free, and hence, by Shelah's aforementioned singular 
  compactness theorem, $\kappa$ is regular. Since $A$ is nonfree, we can 
  fix a $\subseteq$-increasing, continuous sequence $\vec{A} := \langle A_\alpha \mid \alpha < 
  \kappa \rangle$ 
  such that
  \begin{enumerate}
    \item each $A_\alpha$ is a subgroup of $A$ of cardinality less than $\kappa$;
    \item $A = \bigcup_{\alpha < \kappa} A_\alpha$;
    \item the set $S := \{\alpha < \kappa \mid A_{\alpha+1}/A_\alpha \text{ is nonfree}\}$ is 
    stationary in $\kappa$.
  \end{enumerate}
  Items (1) and (2) are clearly upward absolute to $V^{\P}$. By the inductive hypothesis, for 
  all $\alpha \in S$, the group $A_{\alpha+1}/A_\alpha$ remains nonfree in $V^{\P}$. Moreover, 
  since $\P$ has the ccc, $S$ remains stationary in $\kappa$, so (3) is also upward absolute. 
  Thus, $\vec{A}$ continues to witness that $A$ nonfree in $V^{\P}$.
\end{proof}

Given infinite cardinals $\kappa \leq \lambda$, note that $\Add(\omega, \kappa)$ is a regular 
suborder of $\Add(\omega, \lambda)$, and $\Add(\omega, \lambda) \cong \Add(\omega, \kappa) 
\times \Add(\omega, [\kappa, \lambda))$. To establish Theorem \ref{thm:cohen_forcing}, we will 
actually prove the following formally stronger statement.

\begin{theorem} \label{thm:cohen}
  Suppose that $A$ is a nonfree abelian group, $\kappa$ is the least cardinality 
  of a nonfree subgroup of $A$,
  \[
    0 \ra K \ra F \ra A \ra 0
  \]
  is a free resolution of $A$, and $B_K$ is a basis for $K$. Then, in $V^{\Add(\omega, \kappa)}$, 
  there is a $\varphi \in \Hom(K,\Z)$ such that
  \begin{enumerate}
    \item $\varphi[B_K] \subseteq \{0,1\}$;
    \item if $\Q \in V$ is a ccc forcing notion, then, in 
    $V^{\Add(\omega, \kappa) \times \Q}$, there is no 
    $\psi \in \Hom(F,\Z)$ extending $\varphi$.
  \end{enumerate}
\end{theorem}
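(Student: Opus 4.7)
The plan is to adapt the induction on $\kappa$ from Theorem~\ref{thm:main_thm_1}, replacing the continuous map $\varphi: 2^\kappa \to \Hom(K,\Z)$ by an $\Add(\omega,\kappa)$-name for a homomorphism $K \to \Z$ and replacing the compactness-based support argument in the lifting step by one based on the ccc. The torsion and countable ($\kappa = \aleph_0$) cases admit a ground-model $\varphi$: Proposition~\ref{prop:torsion} and Theorem~\ref{ctbl_thm} furnish $\varphi$'s whose non-extension hinges on a finitely generated subgroup together with either divisibility data or the enumeration of $\Hom(F_0,\Z)$ for a finitely generated $F_0$, all of which are absolute between $V$ and any ccc extension. (For the countable case we apply Theorem~\ref{ctbl_thm} to a countable nonfree subgroup of $A$, which exists by hypothesis, and extend the resulting $\varphi_0$ to $K$ by $0$ outside the basis of the associated $K_0$.)

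For the main inductive case ($A$ torsion-free, $\kappa > \aleph_0$), we first reduce to $A$ almost free via an elementary submodel argument in $V$, then fix the data $\langle M_\alpha : \alpha < \kappa\rangle$, $A_\alpha$, $F_\alpha$, $K_\alpha$, the quotients $A_\alpha^*, F_\alpha^*, K_\alpha^*$, bases $B_{F,\alpha}^*, B_{K,\alpha}^*$, and stationary $S$ exactly as in Section~\ref{unctble_sec}, and set $\kappa_\alpha := |A_\alpha^*|$. Fix an index set $I := S \sqcup \bigsqcup_{\alpha \in S} J_\alpha$ of cardinality $\kappa$ with $|J_\alpha| = \kappa_\alpha$ and the $J_\alpha$'s pairwise disjoint and disjoint from $S$, and identify $\Add(\omega,\kappa) \cong \Add(\omega, I)$. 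The inductive hypothesis applied to each $A_\alpha^*$ yields an $\Add(\omega, J_\alpha)$-name $\dot\varphi_\alpha$ for a homomorphism $K_\alpha^* \to \Z$ with values in $\{0,1\}$ on its canonical basis and no extension in any ccc extension of $V^{\Add(\omega,J_\alpha)}$. Splitting the Cohen generic as $g = (h, (g_\alpha)_{\alpha \in S})$ with $h \in 2^S$ and $g_\alpha \in 2^{J_\alpha}$, define $\varphi \in V[g]$ by declaring $\varphi(t) := \dot\varphi_\alpha[g_\alpha](t + K_\alpha)$ when $t \in B_{K,\alpha}^*$ and $h(\alpha) = 1$, and $\varphi(t) := 0$ otherwise, then extend $\Z$-linearly to $K$. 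This satisfies clause (1) by construction.

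For clause (2), suppose $\psi: F \to \Z$ extends $\varphi$ in some ccc extension $V[g][H]$, with $H$ generic for a ccc $\dot\Q$; absorb into $\B' := \mathrm{ro}(\Add(\omega,\kappa)) \ast \dot\Q$, over which $\psi$ has a name $\dot\psi$. By ccc, for each $z \in B_F$ the value of $\dot\psi(z)$ is decided by a countable antichain, whose Cohen projections give a countable support $v(z) \subseteq I$. A stationarity-plus-closure argument then furnishes $\alpha \in S$ with $\delta_\alpha = \alpha$, with $v(z_\gamma)$ contained in the coordinates indexing the initial segment of the filtration strictly below $\alpha$ for all $\gamma < \alpha$, and with $v(z_\gamma) \cap (J_\alpha \cup \{\alpha\}) = \emptyset$ for all $\gamma < \alpha$ (using that any given countable $v(z_\gamma)$ meets only countably many $J_\beta$'s). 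Mimicking the ``difference at $f(\alpha) = 0, 1$'' construction of $\psi_\alpha$ in Section~\ref{unctble_sec}, pass to the enlarged ccc complete Boolean algebra $\B \ast (\dot\Q \times \sigma_\alpha(\dot\Q))$ — where $\sigma_\alpha$ is the involution of $\B$ flipping the bit at the coordinate $\alpha$ — to obtain names $\dot\psi$ and $\sigma_\alpha(\dot\psi)$ for lifts of $\dot\varphi$ and $\sigma_\alpha(\dot\varphi)$ respectively. The closure property forces $\dot\psi - \sigma_\alpha(\dot\psi)$ to vanish on $F_\alpha$, so it descends to a homomorphism $\dot\psi_\alpha : F_\alpha^* \to \Z$; a direct computation analogous to the end of the proof of Theorem~\ref{thm:main_thm_1} then shows $\dot\psi_\alpha$ lifts $\pm \dot\varphi_\alpha$ in a ccc extension of $V^{\Add(\omega,J_\alpha)}$, contradicting the inductive hypothesis.

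The principal technical hurdle will be this final symmetry step: $\sigma_\alpha$ need not extend to all of $\B \ast \dot\Q$ since $\dot\Q$ may genuinely depend on the $\alpha$-coordinate, but the product-forcing workaround $\dot\Q \times \sigma_\alpha(\dot\Q)$ circumvents this while preserving ccc. Verifying that $\dot\psi - \sigma_\alpha(\dot\psi)$ vanishes on representatives of $F_\alpha$-basis elements — and hence that $\dot\psi_\alpha$ is well-defined — depends crucially on the disjointness $v(z_\gamma) \cap (J_\alpha \cup \{\alpha\}) = \emptyset$ secured in the closure argument.
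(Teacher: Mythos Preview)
Your approach tracks the paper's closely: same induction on $\kappa$, same base cases, same filtration with stationary $S$, same bit-flipping automorphism at a well-chosen $\alpha$. The main organizational difference is that the paper exploits $\Add(\omega,\kappa)\cong\Add(\omega,\kappa)\times\Add(\omega,\kappa)$ to first pass to an intermediate model $V_1=V^{\Add(\omega,\kappa)}$ in which every $\varphi_\alpha$ is an \emph{actual} homomorphism (not a name), and only then introduces the second Cohen generic $g$ that selects which $\varphi_\alpha$'s are switched on. This removes the need for your partition $I=S\sqcup\bigsqcup J_\alpha$ and the attendant bookkeeping about $v(z_\gamma)$ avoiding $J_\alpha$.

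There is a genuine gap in your symmetry step. You claim that $\B\ast(\dot\Q\times\sigma_\alpha(\dot\Q))$ is ccc, but the product of two ccc forcings need not be ccc, so neither this product nor the two-step iteration $\dot\Q\ast\sigma_\alpha(\dot\Q)$ is guaranteed to be ccc over $V[g]$. Since the inductive hypothesis only rules out extensions of $\varphi_\alpha$ in \emph{ccc} extensions, your construction of $\psi_\alpha$ may land in a model to which the hypothesis does not apply, and the contradiction does not close. The paper avoids the product entirely: because the automorphism $\sigma_\alpha$ lives on the second Cohen factor and $V_1[G_0]=V_1[G_1]$, the paper takes a \emph{single} generic $H$ for $\P:=\dot\P^{G_0}$ over that common model and sets $\psi_i:=\dot\psi^{G_i\ast H}$ for $i<2$, so that both $\psi_0$ and $\psi_1$ already live in the one ccc extension $V_1[G_0][H]$ of $V_1$. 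The verification that $\psi_0\restriction F_\alpha=\psi_1\restriction F_\alpha$ then comes directly from the deciding conditions $(p_\gamma,q_\gamma)\in G\ast H$ with $\dom(p_\gamma)\subseteq\alpha$. You should replace your product-forcing workaround with this single-$H$ argument.
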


\begin{proof}
  The proof is by induction on $|A|$; since the proof resembles that of Theorem \ref{thm:main_thm_1}, 
  we will omit some details and frequently refer the reader back to that earlier proof.
  If $A$ has nonzero torsion elements or is torsion-free and countable, then the homomorphism 
  $\varphi \in \Hom(K,\Z)$ constructed (in $V$) in the proof of Proposition \ref{prop:torsion} 
  or Theorem \ref{ctbl_thm}, respectively, satisfies the conclusion of the theorem, since, as 
  the reader may readily verify in either case, the argument that $\varphi$ cannot be extended 
  to an element of $\Hom(F,\Z)$ can be carried out equally well in any outer model of $V$.
  
  We may thus assume that $\kappa > \aleph_0$ and $A$ is torsion-free. 
  As in the proof of Theorem \ref{thm:main_thm_1}, 
  we may assume that $A$ is almost free and $|F| = \kappa$. To see this, let 
  $N$, $A_0$, $F_0$, $K_0$, $B_{F,0}$, and $B_{K,0}$ be as in the analogous argument 
  in the earlier proof. Suppose that we are able to obtain, 
  in $V^{\Add(\omega, \kappa)}$, a $\varphi_0 \in \Hom(K_0, \Z)$ such that 
  $\varphi_0[B_{K,0}] \subseteq \{0,1\}$ and, for every ccc $\Q \in V$, $\varphi_0$ cannot be extended to a map $\psi_0 \in 
  \Hom(F_0, \Z)$ in $V^{\Add(\omega, \kappa) \times \Q}$. Extend $\varphi_0$ 
  to a map $\varphi \in \Hom(K,\Z)$ by letting $\varphi(t) = 0$ for all $t \in B_K \setminus B_{K,0}$. Then 
  $\varphi \in V^{\Add(\omega, \kappa)}$. Moreover, if 
  $\Q \in V$ is a ccc forcing notion, then $\varphi$ does not 
  extend in $V^{\Add(\omega, \kappa) \times \Q}$ to an element of $\Hom(F,\Z)$, as the 
  restriction of such an extension to $F_0$ would extend $\varphi_0$, contradicting our choice of 
  $\varphi_0$.
  
  Assume now that $A$ is almost free and $|F| = \kappa$. For $\alpha < \kappa$, let 
  $M_\alpha$, $\delta_\alpha$, $A_\alpha$, $F_\alpha$, and $K_\alpha$ be as in the proof of 
  Theorem \ref{thm:main_thm_1}. Let
  \[
    S := \{\alpha < \kappa \mid A_{\alpha+1}/A_\alpha \text{ is nonfree}\};
  \] 
  again, since $A$ is almost free and nonfree, we may assume that $S$ is stationary in $\kappa$.
  For each $\alpha \in S$, let $\kappa_\alpha$, $A_\alpha^*$, $F_\alpha^*$, $K_\alpha^*$, 
  $B^*_{F,\alpha}$, and $B^*_{K,\alpha}$ be as in the proof of Theorem \ref{thm:main_thm_1}.
  
  Note that $\Add(\omega, \kappa) \cong \Add(\omega, \kappa) \times \Add(\omega, \kappa)$; move to an extension $V_1$ of the ground model $V$ by (the first copy 
  of) $\Add(\omega, \kappa)$. In particular, by the inductive hypothesis, we can assume that, for every 
  $\alpha \in S$, we have a $\varphi_\alpha \in \Hom(K_\alpha^*, \Z)$ such that
  \begin{itemize}
    \item $\varphi_\alpha[\{t + K_\alpha \mid t \in B^*_{K,\alpha}\}] \subseteq \{0,1\}$;
    \item for every ccc $\Q \in V$, in $V_1^{\Q}$ there is no $\psi_\alpha \in \Hom(F_\alpha^*, \Z)$ 
    extending $\varphi_\alpha$.
  \end{itemize}
  We now describe a method for producing elements of $\Hom(K,\Z)$. Given a function $f:\kappa \ra \{0,1\}$, 
  define $\varphi(f) \in \Hom(K,\Z)$ as follows. For every $t \in B_K$, if there is $\alpha \in S$ 
  such that $t \in B^*_{K,\alpha}$ and $f(\alpha) = 1$, then let $\varphi(f)(t) = \varphi_\alpha(t 
  + K_\alpha)$. In all other cases, let $\varphi(f)(t) = 0$. Then extend linearly to all of $K$.
  By construction, we have $\varphi(f)[B_K] \subseteq \{0,1\}$ for all $f:\kappa \ra \{0,1\}$.
  
  Note that this construction continues to make sense in any outer model of $V_1$. Let 
  $\dot{g}$ be the canonical $\Add(\omega, \kappa)$-name for the union of the generic filter, and 
  note that $\dot{g}$ is a name for a function from $\kappa$ to $\{0,1\}$. Then let $\dot{\varphi}$ 
  be an $\Add(\omega, \kappa)$-name for $\varphi(\dot{g})$. We claim that $\dot{\varphi}$ is forced 
  to be as desired. To see this, let $\Q \in V$ be a ccc forcing 
  notion, and suppose for the sake of contradiction that $\dot{\psi} \in V_1$ is an $\Add(\omega, \kappa) \times 
  \Q$-name that is forced to be an element of $\Hom(F,\Z)$ extending $\dot{\varphi}$.
  
  Let $G \times H$ be $\Add(\omega, \kappa) \times \Q$-generic over $V_1$, and let 
  $g = \bigcup G$. Let $\varphi = \dot{\varphi}^G = \varphi(g)$, and let $\psi = \dot{\psi}^{G\times H}$.
  Let $\{z_\gamma \mid \gamma < \kappa\}$ be the $\vartriangleleft$-least injective enumeration of 
  $B_F$. For each $\gamma < \kappa$, find $(p_\gamma,q_\gamma) \in G \times H$ deciding the value of 
  $\dot{\psi}(z_\gamma)$. Since the domain of each $p_\gamma$ is a finite subset of $\kappa$, we can 
  find $\alpha \in S$ such that
  \begin{itemize}
    \item $\delta_\alpha = \alpha$;
    \item for all $\gamma < \alpha$, we have $\dom(p_\gamma) \subseteq \alpha$.
  \end{itemize}
  For each $p \in \Add(\omega, \kappa)$, define $\hat{p} \in \Add(\omega, \kappa)$ by letting 
  $\dom(\hat{p}) = \dom(p)$ and, for all $\beta \in \dom(p)$, letting
  \[
    \hat{p}(\beta) = \begin{cases}
      p(\beta) & \text{if } \beta \neq \alpha \\ 
      1 - p(\beta) & \text{if } \beta = \alpha.
    \end{cases}
  \]
  Since the map $p \mapsto \hat{p}$ is an automorphism of $\Add(\omega, \kappa)$, the filter 
  $\hat{G} = \{\hat{p} \mid p \in G\}$ is generic over $V_1$, and the function $\hat{g} = \bigcup 
  \hat{G}$ agrees with $g$ everywhere except $\alpha$, where it takes the opposite value.
  Without loss of generality, assume that $g(\alpha) = 0$ and $\hat{g}(\alpha) = 1$. For ease of 
  comprehension, we then denote $(G,g)$ by $(G_0, g_0)$ and $(\hat{G}, \hat{g})$ by 
  $(G_1, g_1)$. Note also that $V_1[G_0] = V_1[G_1]$, and hence $H$ is $\Q$-generic over both.
  For $i < 2$, let $\varphi_i := \varphi(g_i) = \dot{\varphi}^{G_i}$, and let $\psi_i := 
  \dot{\psi}^{G_i \times H}$. 
  
  By our assumptions about $\dot{\psi}$, we know that, for each $i < 2$, $\psi_i \in \Hom(F,\Z)$ 
  extends $\varphi_i$. Moreover, by our choice of $\alpha$ and the fact that $g_0 \restriction \alpha 
  = g_1 \restriction \alpha$, we know that $\psi_0 \restriction F_\alpha = \psi_1 \restriction 
  F_\alpha$. Now define a homomorphism $\psi_\alpha \in \Hom(F_\alpha^*, \Z)$ as follows. For all 
  $z \in B^*_{F,\alpha}$, let $\psi_\alpha(z + F_\alpha) = \psi_1(z) - \psi_0(z)$, and then extend 
  linearly to the rest of $F_\alpha^*$. We claim that $\psi_\alpha$ extends $\varphi_\alpha$. The proof 
  of this is exactly the same as that of the analogous fact in the proof of Theorem \ref{thm:main_thm_1}, 
  so we leave it to the reader. But then, in $V_1[G \times H]$, we have an element of 
  $\Hom(F_\alpha^*,\Z)$ extending~$\varphi_\alpha$, contradicting our choice of $\varphi_\alpha$ and the fact that $V_1[G \times H]$ is an extension of~$V_1$ 
  by $\Add(\omega, \kappa) \times \Q$, which is a ccc forcing notion in $V$.
\end{proof}

\section{Stone spaces} \label{stone_sec}
In this section, we take a direct route to the proof of clause 
(2) of Theorem \ref{main_thm}. A more indirect route, involving 
a broader investigation into more general connections between 
condensed mathematics and forcing, occupies Sections 
\ref{top_spaces_sec} and \ref{compact_sec}, which can be read 
independently of this one. 

Given an infinite cardinal $\kappa$, let $\bb{B}_\kappa$ denote the Boolean completion of 
$\Add(\omega, \kappa)$, and let $S_\kappa$ denote the Stone space of $\bb{B}_\kappa$. We will sometimes slightly abuse notation and think 
of $\Add(\omega, \kappa)$ as a subset of $\bb{B}_\kappa$. Given sets $X$ and $Y$, we now highlight a 
translation between $\bb{B}_\kappa$-names for functions from $X$ to $Y$ and continuous 
functions from $S_\kappa$ to the product space $Y^X$. Recall that a clopen basis for $S_\kappa$ is 
given by all sets of the form
\[
  N_b := \{\mc{U} \in S_\kappa \mid b \in \mc{U}\}
\]
for $b \in \bb{B}^+_\kappa$.

\begin{definition} \label{def:translation}
  Suppose that $\kappa$ is an infinite cardinal and $X$ and $Y$ are nonempty sets.
  \begin{enumerate}
    \item Suppose that $\varphi : S_\kappa \ra Y^X$ is continuous, where $Y$ is discrete and 
    $Y^X$ is given the product topology. Define a $\bb{B}_\kappa$-name $\dot{f}_\varphi$ for a 
    function from $X$ to $Y$ as follows. Given $x \in X$ and $y \in Y$, let 
    $U_{x,y} := \{h \in Y^X \mid h(x) = y\}$, and set
    \[
      \llbracket \dot{f}_\varphi(x) = y \rrbracket_{\bb{B}_x} := \bigvee \{b \in \bb{B}_\kappa 
      \mid N_b \subseteq \varphi^{-1}[U_{x,y}]\}.
    \]
    \item Suppose now that $Y$ is a \emph{finite} set and $\dot{f}$ is a 
    $\bb{B}_\kappa$-name for a function from $X$ to $Y$. Define a function 
    $\varphi_{\dot{f}} : S_\kappa \ra Y^X$ as follows: for each $\mc{U} \in S_\kappa$ and $x \in X$, let 
    $\varphi_{\dot{f}}(\mc{U})(x)$ be the unique $y \in Y$ such that $\llbracket \dot{f}(x) = y 
    \rrbracket_{\bb{B}_\kappa} \in \mc{U}$. Note that such a $y$ must exist, since 
    $\{ \llbracket \dot{f}(x) = y \rrbracket_{\bb{B}_\kappa} \mid y \in Y \}$ is a finite maximal 
    antichain in $\bb{B}_\kappa$ and therefore must intersect $\mc{U}$.
  \end{enumerate}
\end{definition}

It is routine to verify that $\dot{f}_\varphi$ as defined in Definition \ref{def:translation}(1) is 
indeed a $\bb{B}_\kappa$-name for a function from $X$ to $Y$ and that $\varphi_{\dot{f}}$ as 
defined in Definition \ref{def:translation}(2) is a continuous function from $S_\kappa$ to 
$Y^X$. (These facts will also follow from more general arguments appearing below in 
Section \ref{top_spaces_sec}.) This translation, 
together with Theorem \ref{thm:cohen}, allows us to replace the space $2^\kappa$ in Theorem 
\ref{thm:main_thm_1} with the Stone space $S_\kappa$.

\begin{theorem} \label{thm:stone}
  Suppose that $A$ is a nonfree abelian group and $\kappa = |A|$. Suppose moreover that 
  $0 \ra K \ra F \ra A \ra 0$ is a free resolution of $A$. Then there is a continuous 
  map $\varphi : S_\kappa \ra \Hom(K,\Z)$ that cannot be lifted pointwise to a~continuous map 
  $\psi : S_\kappa \ra \Hom(F,\Z)$.
  
  Moreover, given a basis $B_K$ for $K$, we can require that, for all $U \in S_\kappa$ and all 
  $t \in B_K$, we have $\varphi(U)(t) \in \{0,1\}$.
\end{theorem}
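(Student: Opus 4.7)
The plan is to derive Theorem \ref{thm:stone} from Theorem \ref{thm:cohen} by translating between $\B_\kappa$-names and continuous functions via the correspondence recorded in Definition \ref{def:translation}. Let $\mu$ be the least cardinality of a nonfree subgroup of $A$, so $\mu \leq \kappa$, and note that the natural embedding $\Add(\omega, \mu) \hookrightarrow \Add(\omega, \kappa)$ induces a complete embedding $\B_\mu \hookrightarrow \B_\kappa$ with $V^{\B_\kappa}$ a ccc (in fact Cohen) extension of $V^{\B_\mu}$. By Theorem \ref{thm:cohen} applied with $\mu$ in place of $\kappa$, I may fix a $\B_\mu$-name, and thence via the embedding a $\B_\kappa$-name $\dot\varphi$, forced to be a homomorphism $K \to \Z$ taking values in $\{0,1\}$ on $B_K$ and admitting no extension to $\Hom(F,\Z)$ in any ccc outer model of $V^{\B_\mu}$; in particular, no such extension exists in $V^{\B_\kappa}$.

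To obtain the desired continuous $\varphi$, I apply Definition \ref{def:translation}(2) to $\dot\varphi \restriction B_K$, with $X = B_K$ and $Y = \{0, 1\}$, yielding a continuous $\tilde\varphi : S_\kappa \to \{0,1\}^{B_K}$. Since $K$ is free on $B_K$ and $K, \Z$ are both discrete, the natural identification $\Hom(K,\Z) \cong \Z^{B_K}$ is a homeomorphism between the compact-open topology on the left and the product topology on the right; composing $\tilde\varphi$ with the continuous inclusion $\{0,1\}^{B_K} \hookrightarrow \Z^{B_K} \cong \Hom(K,\Z)$ then produces a continuous $\varphi : S_\kappa \to \Hom(K,\Z)$ satisfying $\varphi(U)(t) \in \{0,1\}$ for every $U \in S_\kappa$ and $t \in B_K$.

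Finally, suppose toward contradiction that a continuous $\psi : S_\kappa \to \Hom(F,\Z)$ is a pointwise lift of $\varphi$. Fixing a basis $B_F$ for $F$ and identifying $\Hom(F,\Z) \cong \Z^{B_F}$, each coordinate map $\psi_z : S_\kappa \to \Z$ (for $z \in B_F$) is continuous and, by compactness of $S_\kappa$ and discreteness of $\Z$, has finite image. Applying Definition \ref{def:translation}(1) to $\psi$ viewed as a map $S_\kappa \to \Z^{B_F}$ yields a $\B_\kappa$-name for a function $B_F \to \Z$, which extends linearly to a $\B_\kappa$-name $\dot\psi$ for a homomorphism $F \to \Z$. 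The main verification is that $\dot\psi$ is forced to extend $\dot\varphi$: this reduces to the standard observation that evaluating a name arising from Definition \ref{def:translation}(1) at a generic ultrafilter $U \in S_\kappa$ recovers the corresponding value $\psi(U)$, so that the pointwise equalities $\psi(U) \restriction K = \varphi(U)$ transfer to the forced identity $\dot\psi \restriction K = \dot\varphi$. This contradicts the defining property of $\dot\varphi$. Since the substantive content lies in Theorem \ref{thm:cohen}, the only remaining work is the routine translation between names and continuous functions; I anticipate no serious obstacles.
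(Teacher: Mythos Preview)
Your proposal is correct and follows essentially the same route as the paper's proof: translate the name from Theorem~\ref{thm:cohen} into a continuous map via Definition~\ref{def:translation}(2), and translate a hypothetical continuous lift back to a name via Definition~\ref{def:translation}(1). One terminological caution: points $U \in S_\kappa$ are ultrafilters in $V$, not generic ones, so the final verification that $\dot\psi$ is forced to extend $\dot\varphi$ should be argued directly at conditions $b \in \B_\kappa$ (as the paper does via two short claims) rather than by ``evaluating at a generic ultrafilter''; your choice to define $\dot\psi$ on $B_F$ and extend linearly is a harmless variant that spares you one of those claims, and the remaining check is, as you say, routine.
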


\begin{proof}
  By Theorem \ref{thm:cohen}, we can fix an $\Add(\omega, \kappa)$-name $\dot{\tau}$ for 
  an element of $\Hom(K,\Z)$ for which it is forced to be the case that
  \begin{itemize}
    \item $\dot{\tau}[B_K] \sub \{0,1\}$; and
    \item there is no $\sigma \in \Hom(F,\Z)$ that extends $\dot{\tau}$.
  \end{itemize}
  We think of $\dot{\tau}$ as a $\bb{B}_\kappa$-name. Let $\dot{f}$ be the $\bb{B}_\kappa$-name 
  for the restriction of $\tau$ to $B_K$; $\dot{f}$ is thus a name for a function from 
  $B_K$ to $2$. Let $\varphi_{\dot{f}}$ be the continuous function from $S_\kappa$ to 
  $2^{B_K}$ given by Definition \ref{def:translation}(2), and let $\varphi : S_\kappa \ra 
  \Hom(K,\Z)$ be defined by letting $\varphi(\mc{U})$ be the linear extension of $\varphi_{\dot{f}}(\mc{U})$ 
  to all of $K$. Then $\varphi$ is also a~continuous function.
  
  We claim that $\varphi$ is as desired. We clearly have $\varphi(\mc{U})[B_K] \sub \{0,1\}$ for all 
  $\mc{U} \in S_\kappa$. Suppose for the sake of contradiction that $\psi:S_\kappa \ra \Hom(F,\Z)$  
  is a continuous map and extends $\varphi$ pointwise. Let $\dot{f}_\psi$ be the $\bb{B}_\kappa$-name 
  for a function from $F$ to $\Z$ given by Definition \ref{def:translation}(1). 
  
  \begin{claim}
    $\Vdash_{\bb{B}_\kappa} \dot{f}_\psi \in \Hom(F,\Z)$.
  \end{claim}
  
  \begin{proof}
    Suppose for the sake of contradiction that there are $y,z \in F$ and $b \in \bb{B}_\kappa^+$ 
    such that $b \Vdash_{\bb{B}_\kappa} \dot{f}_\psi(y) + \dot{f}_\psi(z) \neq \dot{f}_\psi(y+z)$.
    By extending $b$, we may assume that there are $k_y$, $k_z$, and $k_{yz}$ such that 
    $b \Vdash_{\bb{B}_\kappa} (\dot{f}_\psi(y), \dot{f}_\psi(z), \dot{f}_\psi(y + z)) = 
    (k_y, k_z, k_{yz})$ and $k_y + k_z \neq k_{yz}$. Let $\mc{U} \in S_\kappa$ be such that $b \in \mc{U}$. 
    Then by the definition of $\dot{f}_\psi$, we have $\psi(\mc{U})(y) = k_y$, $\psi(\mc{U})(z) = k_z$, and 
    $\psi(\mc{U})(y + z) = k_{yz}$, contradicting the fact that $\psi(\mc{U}) \in \Hom(F,\Z)$.
  \end{proof}
  
  \begin{claim}
    $\Vdash_{\bb{B}_\kappa} \dot{f}_\psi \restriction K = \dot{\tau}$.
  \end{claim}
  
  \begin{proof}
    Suppose for the sake of contradiction that there is $t \in B_K$ and $b \in \bb{B}_\kappa^+$ 
    such that $b \Vdash_{\bb{B}_\kappa} \dot{f}_\psi(t) \neq \dot{\tau}(t)$. By extending $b$ if 
    necessary, we may assume without loss of generality that there are $k_\psi, k_\tau \in \Z$ 
    such that $b \Vdash_{\bb{B}_\kappa}(\dot{f}_\psi(t),\dot{\tau}(t)) = (k_\psi,k_\tau)$.
    Let $\mc{U} \in S_\kappa$ be such that $b \in \mc{U}$. Then by the definition of 
    $\varphi_{\dot{f}}$, we have $\varphi_{\dot{f}}(\mc{U})(t) = \varphi(\mc{U})(t) = k_\psi$, and by the 
    definition of $\dot{f}_\psi$, we have $\psi(\mc{U})(t) = k_\tau$, contradicting the fact that 
    $\psi$ extends $\varphi$ pointwise.
  \end{proof}
  But now we have shown that there is forced by $\bb{B}_\kappa$ to be a $\sigma \in \Hom(F,\Z)$ (namely, 
  $\dot{f}_\psi$) that extends $\dot{\tau}$, which is a contradiction, thus completing the proof.
\end{proof}

\section{Topological spaces in forcing extensions}
\label{top_spaces_sec}

In this section and the next, we begin a general investigation 
into connections between condensed mathematics and forcing, 
the results of which will yield clause (2) of Theorem \ref{main_thm} 
as a corollary.

Suppose that $(X,\tau)$ is a compact Hausdorff space and $W$ is an outer model of $V$. 
Then there is a canonical way to interpret $(X,\tau)$ as a compact Hausdorff space 
$(\hat{X}, \hat{\tau})$ in $W$. First, let $\tau_c$ denote the collection of all 
closed subsets of $X$ (in $V$). In $W$, let $\hat{X}$ denote the collection of all 
maximal filters on $\tau_c \setminus \{\emptyset\}$. In other words, $\hat{X}$ is the 
collection of all maximal collections of elements of $\tau_c$ with the finite intersection 
property.

There is a natural map $\pi:X \ra \hat{X}$ defined by letting $\pi(x) := \{D \in \tau_c \mid 
x \in D\}$ for all $x \in X$. We also define $\pi:\tau \ra \power(\hat{X})$ by letting 
$\pi(U) := \{F \in \hat{X} \mid (X \setminus U) \notin F\}$. Then let $\hat{\tau}$ be the 
topology on $\hat{X}$ generated by $\pi[\tau]$.

\begin{remark}
    The space $(\hat{X}, \hat{\tau})$ identified above, together with the associated maps 
    $\pi:X \ra \hat{X}$ and $\pi:\tau \ra \hat{\tau}$ is precisely the \emph{interpretation} 
    of $(X,\tau)$ in $W$, in the sense of \cite{zapletal}. In particular, this implies 
    the following properties, which can also readily be directly verified:
    \begin{itemize}
        \item $(\hat{X},\hat{\tau})$ is compact in $W$;
        \item $\pi[\tau]$ is a basis for $\hat{\tau}$;
        \item for all $x \in X$ and $U \in \tau$, we have $x \in U$ if and only if 
        $\pi(x) \in \pi(U)$.
    \end{itemize}
\end{remark}

Suppose that $\B$ is a complete Boolean algebra and $S = S(\B)$ is its Stone space.
We now show how to identify the space $(\hat{X}, \hat{\tau})$, as computed 
in the forcing extension by $\B$, with an appropriate quotient of the space 
$C(S,X)^V$, equipped with the compact-open topology. In fact, we will prove 
something more general.

We first establish a correspondence between elements of $C(S,X)$ and 
$\B$-names for points in $\hat{X}$.\footnote{Recent work in 
this direction, focusing especially on the case in which 
$X$ is a (compact) Polish space, can be found in 
\cite{vaccaro_viale}; many of the basic results we present here 
are implicit in that work.} 
In the forward direction, fix a 
function $f \in C(S,X)$, and define a $\B$-name $\dot{F}_f$ for a subset 
of $\tau_c$ as follows. For every $D \in \tau_c$ and every $b \in \B^+$, 
put $(\check{D}, b)$ in $\dot{F}_f$ if and only if $N_b \subseteq 
f^{-1}[D]$.

\begin{lemma}
    $\dot{F}_f$ is a $\B$-name for a maximal filter on $\tau_c$.
\end{lemma}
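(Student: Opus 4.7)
The plan is to compute the Boolean values $\llbracket \check{D} \in \dot{F}_f \rrbracket_\B = \bigvee\{b \in \B : N_b \subseteq f^{-1}[D]\}$ and verify that each filter and maximality axiom is forced with Boolean value $1_\B$. The conceptual anchor is Stone duality: the clopen algebra of $S$ is (canonically isomorphic to) $\B$, and, because $\B$ is complete, also to the algebra $\mathrm{RO}(S)$ of regular open subsets of $S$. Under this identification, the join $\llbracket \check{D} \in \dot{F}_f \rrbracket$ corresponds to the interior (up to regular-open closure) of the closed set $f^{-1}[D]\subseteq S$, so every clause of the lemma translates into a statement about interiors of preimages of closed subsets of $X$.

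First I would dispose of the filter axioms. The Boolean value of $\check{X} \in \dot{F}_f$ is $1_\B$ because $f^{-1}[X] = S$, and the value of $\check{\emptyset} \in \dot{F}_f$ is $0_\B$ because no $N_b$ with $b > 0$ is empty. Upward closure (if $D_1 \subseteq D_2$, then $\llbracket \check{D_1} \in \dot{F}_f \rrbracket \leq \llbracket \check{D_2} \in \dot{F}_f \rrbracket$) is immediate from $f^{-1}[D_1] \subseteq f^{-1}[D_2]$. Closure under binary intersection follows from the identity $N_b \cap N_c = N_{b \wedge c}$, the fact that $f^{-1}[D_1 \cap D_2] = f^{-1}[D_1] \cap f^{-1}[D_2]$, and the distributivity of binary meets over arbitrary joins in the complete Boolean algebra $\B$.

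The substantive content is maximality: for each $D \in \tau_c$, I need to show
\[
\llbracket \check{D} \in \dot{F}_f \rrbracket \;\vee\; \bigvee_{\substack{D' \in \tau_c \\ D \cap D' = \emptyset}} \llbracket \check{D'} \in \dot{F}_f \rrbracket \;=\; 1_\B.
\]
Translated through Stone duality, this reduces to verifying that the open subset
\[
W \;:=\; \mathrm{int}(f^{-1}[D]) \;\cup\; \bigcup_{D' \cap D = \emptyset} \mathrm{int}(f^{-1}[D'])
\]
of $S$ is dense. Here the compact Hausdorff (hence normal) topology on $X$ carries the weight: for any $U \in S$ with $x := f(U) \notin D$, one can choose disjoint open sets $V_1, V_2 \subseteq X$ with $x \in V_1$ and $D \subseteq V_2$, and then $D' := X \setminus V_2$ is a closed set disjoint from $D$ whose preimage contains the open neighborhood $f^{-1}[V_1]$ of $U$. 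Thus $W$ contains $\mathrm{int}(f^{-1}[D]) \cup f^{-1}[X \setminus D]$, whose complement is the boundary $\partial(f^{-1}[D])$ of a closed set and so is nowhere dense. Hence $W$ is dense in $S$, and the corresponding element of $\B$ is $1_\B$.

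The main obstacle I anticipate is bookkeeping rather than content: one must keep the translation between Boolean values in $\B$ and open (regular open) subsets of $S$ consistent, especially when passing an infinite join through a meet in the binary-intersection step. The only genuinely topological ingredients are the continuity of $f$ (ensuring $f^{-1}[D]$ is closed, hence with nowhere-dense boundary) and the normality of $X$ (providing the separating opens for maximality); everything else is routine Boolean-valued computation.
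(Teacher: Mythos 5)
Your proof is correct and follows essentially the same route as the paper's: both verify the filter axioms directly from the definition and then establish maximality by a density argument using the continuity of $f$ together with the separation properties of the compact Hausdorff space $X$ (you invoke normality, the paper invokes regularity; either suffices to separate a point from a closed set). The only cosmetic difference is that you phrase density via the isomorphism $\B\cong\mathrm{RO}(S)$ as density of an open subset of $S$, while the paper works directly with a dense subset $E_C$ of $\B$ — the two formulations are interchangeable.
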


\begin{proof}
    The fact that $\dot{F}_f$ is forced to be a filter is immediate from the 
    definition. To show that it is forced to be maximal, fix an arbitrary 
    $C \in \tau_c$. It suffices to show that the following set is dense in 
    $\B^+$:
    \[
        E_C := \{b \in \B^+ \mid N_b \subseteq f^{-1}[C] \text{ \textbf{or} } \exists 
        D \in \tau_c\, [D \cap C = \emptyset \wedge N_b \subseteq f^{-1}[D]]\}.
    \]
    To this end, fix an arbitrary $b_0 \in \B^+$, and assume that $N_{b_0} 
    \not\subseteq f^{-1}[C]$. Fix $\mc U \in N_{b_0}$ such that 
    $f(\mc U) \in X \setminus C$. Since $X$ is compact, it is regular, so 
    we can find an open set $O \in \tau$ such that $\mc U \in O \subseteq 
    \mathrm{cl}(O) \subseteq X \setminus C$. By the continuity of $f$, we can 
    find $b \leq b_0$ such that $\mc U \in N_b$ and $N_b \subseteq 
    f^{-1}[O]$. Then $b \in E_C$, as desired.
\end{proof}

In the other direction, fix a $\B$-name $\dot{F}$ for a maximal filter on 
$\tau_c$. For each $D \in \tau_c$, let $b_D := \llbracket D \in \dot{F} 
\rrbracket_\B$. Given $\mc U \in S$, let $\dot{F}_{\mc U} := \{D \in \tau_c 
\mid b_D \in \mc U\}$.

\begin{lemma}
    For all $\mc U \in S$, $\bigcap \dot{F}_{\mc U}$ contains exactly one point.
\end{lemma}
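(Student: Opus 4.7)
The plan is to show separately that $\bigcap \dot{F}_{\mathcal{U}}$ is (i) nonempty and (ii) a singleton, both by exploiting the maximality of $\dot{F}$ together with the fact that, in a Boolean algebra, ultrafilters are prime.

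For nonemptiness, I will argue that $\dot{F}_{\mathcal{U}}$ has the finite intersection property, after which compactness of $X$ delivers a point in $\bigcap \dot{F}_{\mathcal{U}}$. Concretely, suppose $D_1, \dots, D_n \in \dot{F}_{\mathcal{U}}$, so that $b_{D_i} \in \mathcal{U}$ for each $i$. Since $\mathcal{U}$ is a filter, $b := \bigwedge_i b_{D_i} \in \mathcal{U}$, in particular $b > 0$. Because $\dot{F}$ is forced to be a filter on $\tau_c \setminus \{\emptyset\}$, we have $b \Vdash \bigcap_i D_i \in \dot{F}$, and so in particular $b \Vdash \bigcap_i D_i \neq \emptyset$. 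Since $\bigcap_i D_i$ is a fixed element of $\tau_c$ and $b > 0$, this forces its nonemptiness in $V$ as well.

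For uniqueness, suppose toward a contradiction that distinct points $x \neq y$ both lie in $\bigcap \dot{F}_{\mathcal{U}}$. Using the compact Hausdorff (in particular, normal) structure of $X$, pick disjoint opens $U \ni x$ and $V \ni y$, and set $C := X \setminus U$ and $D := X \setminus V$; these are closed, with $x \notin C$, $y \notin D$, and $C \cup D = X$. I claim $b_C \vee b_D = 1_\B$. Indeed, $\dot{F}$ is forced to be a \emph{maximal} filter, so whenever $C \notin \dot{F}$ there is $C' \in \dot{F}$ with $C \cap C' = \emptyset$, whence $C' \subseteq D$, so by upward closure $D \in \dot{F}$; this shows $\Vdash_\B C \in \dot{F} \vee D \in \dot{F}$, yielding $b_C \vee b_D = 1_\B$. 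Since $\mathcal{U}$ is an ultrafilter on the Boolean algebra $\B$, it is prime, so either $b_C \in \mathcal{U}$ or $b_D \in \mathcal{U}$. In the first case $C \in \dot{F}_{\mathcal{U}}$, forcing $x \in C$, a contradiction; in the second, $y \in D$, also a contradiction.

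The main obstacle, and the reason the argument works, is the interplay between two distinct notions of maximality: the maximality of $\dot{F}$ as a filter on the lattice $\tau_c \setminus \{\emptyset\}$ (which is not itself a Boolean lattice, so primality is not automatic) and the primality of $\mathcal{U}$ as an ultrafilter on the Boolean algebra $\B$. The first is needed to convert a covering $C \cup D = X$ into the Boolean identity $b_C \vee b_D = 1_\B$, and the second is needed to propagate this disjunction down to $\mathcal{U}$; neither step can be shortcut. Once these are combined with basic compactness and regularity of $X$, the two bullet points of the lemma fall out cleanly.
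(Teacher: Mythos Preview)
Your proof is correct and follows essentially the same approach as the paper's: both establish nonemptiness via the finite intersection property (using that $\dot{F}$ is forced to be a filter and $\mathcal{U}$ is closed under finite meets) together with compactness of $X$, and both obtain uniqueness by taking disjoint open neighborhoods of two putative points, observing that their closed complements cover $X$, using maximality of $\dot{F}$ to get $b_C \vee b_D = 1_{\B}$, and then invoking primality of the ultrafilter $\mathcal{U}$. Your explicit justification of $b_C \vee b_D = 1_{\B}$ via ``if $C \notin \dot{F}$ then some $C' \in \dot{F}$ is disjoint from $C$, hence contained in $D$'' is a nice unpacking of what the paper leaves as ``a routine argument.''
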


\begin{proof}
    Fix $\mc U \in S$. We first show that $\dot{F}_{\mc U}$ is a filter. To
    this end, fix $D_0, D_1 \in \dot{F}_{\mc U}$. Since $\dot{F}$ is forced to be a filter, 
    we have $\llbracket D_0 \cap D_1 \in \dot{F} \rrbracket_{\B} = 
    \llbracket D_0 \in \dot{F} \rrbracket_{\B} \wedge \llbracket D_1 \in \dot{F}
    \rrbracket_{\B}$. Then, since $\mc U$ is a filter and $\llbracket D_i 
    \in \dot{F} \rrbracket_{\B} \in \mc U$ for $i < 2$, it follows that 
    $\llbracket D_0 \cap D_1 \in \dot{F} \rrbracket_{\B} \in \mc U$, 
    so $D_0 \cap D_1 \in \dot{F}_{\mc U}$.

    Thus, $\dot{F}_{\mc U}$ has the finite intersection property; since $X$ is 
    compact, it follows that $\bigcap \dot{F}_{\mc U}$ is nonempty. Now suppose 
    for the sake of contradiction that $x_0$ and $x_1$ are distinct elements of 
    $\bigcap \dot{F}_{\mc U}$. Since $X$ is Hausdorff, we can find disjoint open 
    neighborhoods $U_0$ and $U_1$ of $x_0$ and $x_1$, respectively. 
    For $i < 2$, let $D_i := X \setminus U_i$. Since $D_0 \cup D_1 = X$ and 
    $\dot{F}$ is forced to be a maximal filter on $\tau_c \setminus \{\emptyset\}$, 
    a routine argument shows that $\llbracket D_0 \in \dot{F} \rrbracket_{\B} 
    \vee \llbracket D_1 \in \dot{F} \rrbracket_{\B} = 1_{\B}$. Therefore, 
    since $\mc U$ is an ultrafilter, there must be $i < 2$ such that 
    $\llbracket D_i \in \dot{F} \rrbracket_{\B} \in \mc U$, and hence 
    $D_i \in \dot{F}_{\mc U}$. But then $x_i \notin D_i$ and 
    $\bigcap \dot{F}_{\mc U} \subseteq D_i$, so $x_i \notin 
    \bigcap \dot{F}_{\mc U}$, which is a contradiction. It follows that 
    $\bigcap \dot{F}_{\mc U}$ contains precisely one point.
\end{proof}

Using the claim, define a function $f_{\dot{F}}:S \ra X$ by letting, for all 
$\mc U \in S$, $f_{\dot{F}}(\mc U)$ be the unique $x \in \bigcap \dot{F}_{\mc U}$.

\begin{lemma}
    $f_{\dot{F}}$ is continuous.
\end{lemma}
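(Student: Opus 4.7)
The plan is to verify continuity directly by showing that, for every $\mathcal{U}_0 \in S$ and every open neighborhood $U \in \tau$ of $x_0 := f_{\dot{F}}(\mathcal{U}_0)$, there is a basic clopen $N_b$ with $\mathcal{U}_0 \in N_b \subseteq f_{\dot{F}}^{-1}[U]$. This will give openness of $f_{\dot{F}}^{-1}[U]$ at every point of the preimage.

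The key step is to find a closed set $E \in \dot{F}_{\mathcal{U}_0}$ with $E \subseteq U$. For this I would exploit the fact, established in the preceding lemma, that $\bigcap \dot{F}_{\mathcal{U}_0} = \{x_0\}$, so the family $\dot{F}_{\mathcal{U}_0} \cup \{X \setminus U\}$ consists of closed subsets of $X$ with empty total intersection. By compactness of $X$, some finite subfamily has empty intersection, so there exist $D_1,\ldots,D_n \in \dot{F}_{\mathcal{U}_0}$ with $(X \setminus U) \cap D_1 \cap \cdots \cap D_n = \emptyset$. Setting $E := D_1 \cap \cdots \cap D_n$, we have $E \in \dot{F}_{\mathcal{U}_0}$ (as $\dot{F}_{\mathcal{U}_0}$ is a filter, which was shown in the previous lemma) and $E \subseteq U$.

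Now put $b := b_E = \llbracket E \in \dot{F} \rrbracket_\B$. Since $E \in \dot{F}_{\mathcal{U}_0}$, we have $b \in \mathcal{U}_0$, so $\mathcal{U}_0 \in N_b$. Finally, for any $\mathcal{V} \in N_b$ we have $b \in \mathcal{V}$, so by definition $E \in \dot{F}_{\mathcal{V}}$; therefore $f_{\dot{F}}(\mathcal{V}) \in \bigcap \dot{F}_{\mathcal{V}} \subseteq E \subseteq U$. This establishes $N_b \subseteq f_{\dot{F}}^{-1}[U]$ and completes the argument.

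I do not anticipate any real obstacle here: all the ingredients (compactness of $X$, filter closure under finite intersections, and the bridge $b_E \in \mathcal{U}_0 \iff E \in \dot{F}_{\mathcal{U}_0}$ built into the definitions) are already available. The only place care is needed is in justifying the passage from ``the whole family has empty intersection'' to ``some finite subfamily does,'' which is where compactness of $X$ enters and without which the argument would fail.
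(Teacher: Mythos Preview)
Your proposal is correct and takes essentially the same approach as the paper: both arguments use compactness to find finitely many closed sets $D_1,\ldots,D_n \in \dot{F}_{\mathcal{U}_0}$ whose intersection lies inside $U$, and then observe that $b_{D_1 \cap \cdots \cap D_n} \in \mathcal{U}_0$ gives the desired basic clopen neighborhood. The only cosmetic difference is that the paper phrases the compactness step as extracting a finite subcover of $X \setminus U$ by the open sets $X \setminus D_y$ (one for each $y \in X \setminus U$), whereas you invoke the finite intersection property on $\dot{F}_{\mathcal{U}_0} \cup \{X \setminus U\}$ directly.
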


\begin{proof}
    Fix $U \in \tau$ and $\mc U \in f_{\dot{F}}^{-1}[U]$.
    For every $y \in X \setminus U$, we can find $D_y \in \tau_c$ 
    such that $y \notin D_y$ and $b_{D_y} \in \mc U$. Then 
    $\{X \setminus D_y \mid y \in X \setminus U\}$ is an open cover 
    of $X \setminus U$, so we can find a finite subset 
    $a \subseteq X \setminus U$ such that $D := \bigcap\{D_y \mid y \in 
    a\} \subseteq U$. Then $b_D = \bigwedge \{b_{D_y} \mid y \in a\} 
    \in \mc U$ and $N_{b_D} \subseteq f_{\dot{F}}^{-1}[U]$.
\end{proof}

It is routine to verify that this is a reflexive duality: Given 
$g \in C(S,X)$, we have $f_{\dot{F}_g} = g$ and, given a $\B$-name 
$\dot{E}$ for a maximal filter on $\tau_c$, we have 
$\Vdash_{\B} \dot{E} = \dot{F}_{f_{\dot{E}}}$. Also, this story can 
clearly be relativized below a given condition $b \in \B^+$: functions 
in $C(N_b,X)$ correspond to $\B$-names $\dot{F}$ such that 
$b \Vdash_{\B} ``\dot{F} \text{ is a maximal filter on } \tau_c"$.

Now suppose that $X$ is an arbitrary Hausdorff space, not necessarily 
compact. Now there may or may not be a well-behaved interpretation 
$\hat{X}$ of $X$ in the extension by $\B$. However, there is still a natural
extension of $X$ in $V^{\B}$ that will correspond to a quotient of 
the space $C(S,X)^V$ in $V^{\B}$. We will denote this extension by 
$\hat{X}_{\mc K}$; it can be thought of as the extension of $X$ spanned by 
ground model compact sets. We now give a definition of this space.

Let $\mc K(X)$ denote the collection of compact subsets of $X$ 
(in $V$). Then $(\mc K(X), \subseteq)$ is a directed partial order. 
For each $K \in \mc K(X)$, let $\tau_K$ denote the subspace topology 
on $K$. For each $K \in \mc K(X)$, there is a canonical interpretation 
$(\hat{K}, \hat{\tau}_K)$ of $(K, \tau_K)$ in $V^{\B}$, as above. 
Namely, $\hat{K}$ is the collection of all maximal filters on $(\tau_K)_c$.
Moreover, if $K \subseteq K'$ are both in $\mc K(X)$, then the inclusion map 
$\iota_{KK'}: K \ra K'$ induces an inclusion map $\hat{\iota}_{KK'} : 
\hat{K} \ra \hat{K'}$. We now define $(X_{\mc K}, \tau_{\mc K})$ in 
$V^{\B}$. 

First, we let the underlying set, $X_{\mc K}$, be the direct 
limit (i.e., colimit) of the system $\langle \hat{K}, \hat{\iota}_{KK'} 
\mid K,K' \in \mc K(X), \ K \subseteq K' \rangle$. Formally, 
$X_{\mc K}$ consists of equivalence classes of the form 
$[(x,K)]$, where $K \in \mc K(X)$ and $x \in \hat{K}$. Given an open 
set $U \in \tau$ and $K \in \mc K(X)$, let $\hat{U}_K$ denote its 
interpretation in $\hat{\tau}_K$. We then define $\hat{U}_{\mc K} 
\subseteq \hat{X}_{\mc K}$ as follows: given $K \in \mc K(X)$ and 
$x \in \hat{K}$, put $[(x,K)] \in \hat{U}_{\mc K}$ if and only if 
$x \in \hat{U}_K$. It is routine to check that this is well-defined. 
Now let $\hat{\tau}_{\mc K}$ be the topology generated by 
$\{\hat{U}_{\mc K} \mid U \in \tau\}$.

We take a slight detour to note that this is not the only, nor arguably the 
most natural, topology we could have placed on $\hat{X}_{\mc K}$. 
We alternatively could have taken the direct limit topology, which we will 
denote by $\hat{\tau}^{\lim}_{\mc K}$, i.e., the finest topology that makes 
all of the limit maps $\hat{\iota}_K : \hat{K} \ra \hat{X}_{\mc K}$ continuous.
We always have $\hat{\tau}_{\mc K} \subseteq \hat{\tau}^{\lim}_{\mc K}$, and 
if $X$ is locally compact, then the two topologies coincide (moreover, 
if $X$ is locally compact, then $\hat{X}_{\mc K}$ is precisely the interpretation 
of $X$ in $V^{\B}$ in the sense of \cite{zapletal}). However, in general the 
direct limit topology can be strictly finer, even for such relatively nice 
spaces as ${^\omega}\omega$, as the following example shows.

\begin{proposition}
    Suppose that $X = {^\omega}\omega$ and $\B$ adds a dominating real. 
    Then $\hat{\tau}^{\lim}_{\mc K} \supsetneq \hat{\tau}_{\mc K}$ in 
    $V^{\B}$.
\end{proposition}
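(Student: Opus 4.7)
The plan is to construct in $V^{\mathbb{B}}$ an explicit subset $U \subseteq \hat{X}_{\mathcal{K}}$ that is open in $\hat{\tau}^{\lim}_{\mathcal{K}}$ but not in $\hat{\tau}_{\mathcal{K}}$.

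First I would pin down the relevant spaces concretely. The compact subsets of $X = {}^{\omega}\omega$ are exactly those contained in some $K_f := \prod_{n < \omega} [0, f(n)]$ for $f \in (\omega^{\omega})^V$, and these form a cofinal subfamily of $\mathcal{K}(X)$. The interpretation $\hat{K}_f$ in $V^{\mathbb{B}}$ may be identified with the product $\prod_{n < \omega} [0, f(n)]$ as computed in $V^{\mathbb{B}}$, since by compactness the maximal filters of closed sets of $K_f$ correspond bijectively to points of that product. Assembling these gives that $\hat{X}_{\mathcal{K}}$ is the set of $x \in ({}^{\omega}\omega)^{V^{\mathbb{B}}}$ pointwise dominated by some ground-model function, and that $\hat{\tau}_{\mathcal{K}}$ is exactly the subspace topology on $\hat{X}_{\mathcal{K}}$ induced from the Baire topology on $({}^{\omega}\omega)^{V^{\mathbb{B}}}$. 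By definition $\hat{\tau}^{\lim}_{\mathcal{K}}$ instead satisfies: $W \subseteq \hat{X}_{\mathcal{K}}$ is open iff $W \cap K_f$ is open in the product topology on $K_f$ for every ground-model $f$.

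Now fix in $V^{\mathbb{B}}$ a dominating real $d \in \omega^{\omega}$, which I may assume satisfies $d(n) \geq 1$ for all $n$ (replacing $d$ by $n \mapsto \max\{d(n), 1\}$ if necessary), and set
\[
    U := \{x \in \hat{X}_{\mathcal{K}} \mid x(n) < d(n) \text{ for all } n < \omega\}.
\]
To verify $U \in \hat{\tau}^{\lim}_{\mathcal{K}}$, fix $f \in (\omega^{\omega})^V$ and let $m_f < \omega$ satisfy $f(n) < d(n)$ for all $n \geq m_f$; then $x(n) \leq f(n) < d(n)$ holds automatically for $x \in K_f$ and $n \geq m_f$, so $U \cap K_f = \{x \in K_f \mid x(n) < d(n) \text{ for all } n < m_f\}$ is clopen in $K_f$. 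For $U \notin \hat{\tau}_{\mathcal{K}}$, I would show that the constant zero function $\mathbf{0} \in U$ is not an interior point: given any $N < \omega$, define $y_N$ by $y_N(N) = d(N)$ and $y_N(n) = 0$ otherwise. Then $y_N$ is bounded by the ground-model constant function with value $d(N) \in \omega \subseteq V$, so $y_N \in \hat{X}_{\mathcal{K}}$; and $y_N \restriction N = \mathbf{0} \restriction N$ yet $y_N(N) = d(N) \not< d(N)$; hence $y_N$ lies in the basic $\hat{\tau}_{\mathcal{K}}$-neighborhood of $\mathbf{0}$ determined by $\mathbf{0} \restriction N$, while $y_N \notin U$, so no such neighborhood is contained in $U$.

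I expect the main subtlety to lie not in the construction itself but in the preparatory identifications of the previous paragraphs---in particular, that $\hat{K}_f$ is canonically the $V^{\mathbb{B}}$-product $\prod_n [0, f(n)]$ and that the basic opens generating $\hat{\tau}_{\mathcal{K}}$ reduce to basic Baire opens intersected with $\hat{X}_{\mathcal{K}}$. Once these are in hand, the separation of the two topologies is witnessed by the single set $U$ above.
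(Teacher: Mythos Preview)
Your proof is correct and follows essentially the same approach as the paper: concretely identify $\hat{X}_{\mathcal{K}}$ as the set of reals in $V^{\mathbb{B}}$ pointwise bounded by a ground-model real, then use a dominating real $d$ to build a set that is open on each $\hat{K}_f$ (because the condition becomes finitary there) but contains no nonempty $\hat{U}_{\mathcal{K}}$. The only cosmetic difference is the choice of witnessing set: the paper uses $O=\{y\mid \forall n\ y(n)\neq d(n)\}$, whereas you use $U=\{x\mid \forall n\ x(n)<d(n)\}$; both work for identical reasons. One minor point of presentation: when you argue that $\mathbf{0}$ is not interior, you only explicitly treat neighborhoods of the form $[\mathbf{0}\restriction N]$; it would be worth noting (as you presumably intend) that these form a neighborhood base at $\mathbf{0}$ in $\hat{\tau}_{\mathcal{K}}$, so this suffices.
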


\begin{proof}
    Let $W$ denote the extension by $\B$. 
    In $V$, for every $x \in {^\omega}\omega$ let $K_x := \{y \in {^{\omega}\omega} 
    \mid y < x\}$. Then $\{K_x \mid x \in {^\omega}\omega\}$ is a $\subseteq$-cofinal 
    subset of $\mc K(X)$. Moreover, it is routine to show that, for every 
    $x \in {^\omega}\omega$, we have $\hat{K}_x = (\prod_{n < \omega} x(n))^W$.
    Therefore, the underlying set of $\hat{X}_{\mc K}$ can be identified with 
    $\{y \in ({^\omega}\omega)^W \mid \exists x \in ({^\omega}\omega)^V ~ y < x\}$.

    Let $d \in ({^\omega}\omega)^W$ dominate every real in $V$, and let 
    \[
      O = \{y \in \hat{X}_{\mc K} \mid \forall n < \omega ~ y(n) \neq d(n)\}.
    \]

    \begin{claim}
        $O \in \hat{\tau}^{\lim}_{\mc K}$.
    \end{claim}

    \begin{proof}
        It suffices to show that, for every $x \in ({^\omega}\omega)^V$, 
        $O \cap \hat{K}_x \in \hat{\tau}_{K_x}$. Fix such an $x$, and let 
        $A := \{n < \omega \mid d(n) < x(n)\}$. By the choice of $d$, $A$ is 
        finite. Then, in $V$, the set $U = \{y \in {^\omega}\omega \mid 
        \forall n \in A ~ y(n) \neq d(n)\}$ is in $\tau$, and we have 
        $O \cap \hat{K}_x = \hat{U}_{K_x}$.
    \end{proof}

    \begin{claim}
        There is no $U \in \tau$ such that $\hat{U}_{\mc K} \subseteq O$.
    \end{claim}

    \begin{proof}
        Fix $U \in \tau$. By shrinking $U$ if necessary, we may assume that 
        $U$ is a basic open set, i.e., there is a finite set $A \subseteq \omega$ 
        and a function $\sigma:A \ra \omega$ such that $U = \{y \in {^\omega}\omega 
        \mid y \restriction A = \sigma\}$. Choose $n^* \in \omega \setminus A$ 
        and define $y \in {^\omega}\omega$ by 
        \[
          y(n) = \begin{cases}
              \sigma(n) & \text{if } n \in A \\
              d(n^*) & \text{if } n = n^* \\
              0 & \text{otherwise.}
          \end{cases}
        \]
        Then, in $W$, we have $y \in \hat{U}_{\mc K} \setminus O$, so $\hat{U}_{\mc 
        K} \not\subseteq O$.
    \end{proof}
    Since $\{\hat{U}_{\mc K} \mid U \in \tau\}$ is a base for $\hat{\tau}_{\mc K}$, 
    this completes the proof.
\end{proof}

We now show that $\hat{X}_{\mc K}$ is homeomorphic to a natural quotient of 
$C(S,X)^V$ in the extension by $\B$. Recall that we endow $C(S,X)^V$ with the 
compact-open topology. Now suppose that $G$ is a 
generic ultrafilter on $\B$, and, in $V[G]$, define an equivalence relation 
$\sim_G$ on $C(S,X)^V$ by letting $g \sim_G h$ if and only if there is 
$b \in G$ such that $g \restriction N_b = h \restriction N_b$. Let 
$C(S,X)^V/G$ denote the quotient space with respect to this equivalence 
relation. For each $U \in \tau$, define a set $N_U \subseteq C(S,X)^V/G$ 
as follows: for all $g \in C(S,X)^V$, put $[g] \in N_U$ if and only if there 
is $b \in G$ such that $g[N_b] \subseteq U$; note that this is well-defined.

\begin{proposition}
    $\{N_U \mid U \in \tau\}$ is a base for $C(S,X)^V/G$.
\end{proposition}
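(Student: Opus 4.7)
The plan is to verify directly the two axioms defining a base of a topology, together with one preliminary well-definedness check that the sets $N_U$ actually descend to the quotient.

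First I would confirm that $N_U$ does not depend on the chosen representative of $[g]$. Suppose $g \sim_G h$ via some $b_0 \in G$ (so that $g \restriction N_{b_0} = h \restriction N_{b_0}$) and that $g[N_{b'}] \subseteq U$ for some $b' \in G$. Then $b_0 \wedge b' \in G$ by the filter property of $G$, and, since $\B$ is a complete Boolean algebra, $N_{b_0 \wedge b'} = N_{b_0} \cap N_{b'}$ in the Stone space $S$; restricted to this set $g$ and $h$ coincide, so $h[N_{b_0 \wedge b'}] \subseteq g[N_{b'}] \subseteq U$, giving $[h] \in N_U$ as required.

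For the covering axiom, taking $U = X$ and any $b \in G$ (e.g.\ $b = 1_{\B}$) yields $g[N_b] \subseteq X$ for every $g \in C(S,X)^V$, so every $[g]$ lies in $N_X$. For closure under finite intersections, I would prove the sharper identity $N_{U_1} \cap N_{U_2} = N_{U_1 \cap U_2}$ for all $U_1, U_2 \in \tau$. The inclusion $\supseteq$ is immediate from the monotonicity of the image map: a single witness $b \in G$ for $g[N_b] \subseteq U_1 \cap U_2$ is simultaneously a witness to $[g] \in N_{U_i}$ for each $i$. For $\subseteq$, given $[g] \in N_{U_1} \cap N_{U_2}$ I would pick $b_i \in G$ with $g[N_{b_i}] \subseteq U_i$ for $i = 1,2$; then $b_1 \wedge b_2 \in G$ by the filter property of $G$ and $N_{b_1 \wedge b_2} = N_{b_1} \cap N_{b_2}$, so $g[N_{b_1 \wedge b_2}] \subseteq g[N_{b_1}] \cap g[N_{b_2}] \subseteq U_1 \cap U_2$, and hence $[g] \in N_{U_1 \cap U_2}$.

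I do not anticipate any real obstacle: the whole argument amounts to the standard dictionary between meets in $\B$ and intersections of the basic clopens $N_b$ of its Stone space, combined with the fact that $G$ is a filter. Accordingly, the same reasoning localizes below any condition, which should be useful for the homeomorphism comparison with $\hat{X}_{\mc K}$ in the sequel.
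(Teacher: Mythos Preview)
Your argument verifies the abstract base axioms (covering and finite intersections), which shows that $\{N_U \mid U \in \tau\}$ generates \emph{some} topology on the underlying set $C(S,X)^V/G$. But that is not what the proposition asserts. The space $C(S,X)^V/G$ is defined as the \emph{quotient} of $C(S,X)^V$ (carrying the compact-open topology) by $\sim_G$, so it already comes equipped with the quotient topology; the claim is that $\{N_U\}$ is a base for \emph{that} topology. For this you must show two things you have not addressed: (i) each $N_U$ is open in the quotient topology, and (ii) every quotient-open set is a union of sets $N_U$, i.e.\ given an arbitrary open $\hat{O}$ in the quotient and $[g] \in \hat{O}$, there exists $U \in \tau$ with $[g] \in N_U \subseteq \hat{O}$.

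Part (ii) is where the real content lies, and it genuinely uses the structure of the compact-open topology. The paper's proof pulls $\hat{O}$ back to an open set $O \subseteq C(S,X)^V$, finds a subbasic compact-open neighborhood $\{h : h[K] \subseteq U\}$ of $g$ inside $O$ for some compact $K \subseteq S$ and $U \in \tau$, and then covers $K$ by finitely many basic clopens $N_{b_i}$ on which $g$ lands in $U$. Setting $b = \bigvee_i b_i$, one then argues by cases on whether $b \in G$: if so, any $h$ with $[h] \in N_U$ can be modified off $N_c$ (for a witness $c \in G$) to land in $O$ without changing its $\sim_G$-class; if not, one similarly shows $\hat{O}$ is the whole space. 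None of this is captured by the intersection identity $N_{U_1} \cap N_{U_2} = N_{U_1 \cap U_2}$, which, while correct, is orthogonal to the point.
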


\begin{proof}
    It is immediate from the definition that each $N_U$ is open. Now 
    let $\hat{O} \subseteq C(S,X)^V/G$ be open, and fix $g \in 
    C(S,X)^V$ such that $[g] \in \hat{O}$. Let $O = \{h \in C(S,X)^V \mid 
    [h] \in \hat{O}\}$. We can then find a compact $K \subseteq S$ and 
    an open set $U \in \tau$ such that $g[K] \subseteq U$ and 
    $\{h \in C(S,X)^V \mid h[K] \subseteq U\} \subseteq O$. 
    Since $K$ is compact and $g$ is continuous, we can find a finite 
    collection $b_0, \ldots, b_{n-1}$ from $\B^+$ such that 
    $K \subseteq \bigcup\{N_{b_i} \mid i < n\}$ and, for all $i < n$, 
    we have $g[N_{b_i}] \subseteq U$. Letting $b := \bigvee \{b_i \mid 
    i < n\}$, we get $K \subseteq N_b$ and $g[N_b] \subseteq U$.

    Suppose first that $b \in G$. In this case, we claim that $N_U 
    \subseteq \hat{O}$. To see this, fix $h \in C(S,X)^V$ such that 
    $[h] \in N_U$. We can therefore find $c \in G$ such that 
    $h[N_c] \subseteq U$. Define $h' \in C(S,X)^V$ by letting 
    $h' \restriction N_c = h \restriction N_c$ and letting 
    $h' \restriction (S \setminus N_c)$ be constant, taking an 
    arbitrary value in $U$. Then $h'[K] \subseteq U$, so $[h'] 
    \in \hat{O}$. Moreover, since $c \in G$, we have $h' \sim_G h$, 
    so $[h'] = [h]$. Thus, $N_U \subseteq \hat{O}$. 

    Suppose next that $b \notin G$. In this case, we claim that 
    $\hat{O}$ is actually all of $C(S,X)^V/G$. To see this, 
    let $h \in C(S,X)^V$ be arbitrary. Define $h' \in C(S,X)^V$ 
    by letting $h' \restriction N_b = g \restriction N_b$ and 
    $h' \restriction (S \setminus N_b) = h \restriction (S \setminus N_b)$. 
    Then $h'[N_b] = g[N_b] \subseteq U$, so $[h'] \in \hat{O}$. 
    Since $b \notin G$, we have $[h'] = [h]$, so $[h] \in \hat{O}$. 
\end{proof}

Now, in $V[G]$, define a map $k : C(S,X)^V/G \ra \hat{X}_{\mc K}$ as 
follows. Given $g \in C(S,X)^V$, let $K := g[S]$. Then $K \in 
\mc K(X)$, so $g$ corresponds in $V$ to a $\B$-name $\dot{F}_g$ for 
an element of $\hat{K}$. Let $F_g$ be the evaluation of this name 
in $V[G]$, and set $k([g]) = [(F_g, K)]$.

\begin{proposition}
    $k$ is a bijection.
\end{proposition}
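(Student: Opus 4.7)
The plan is to verify $k$ is well-defined, surjective, and injective in that order, all via the reflexive duality between $C(S, Y)^V$ and $\B$-names for points of $\hat Y$ established earlier in this section for compact Hausdorff $Y$. The key conceptual observation is that every $g \in C(S, X)^V$ has compact image $g[S] \in \mc K(X)$, so the analysis reduces fibrewise along $\mc K(X)$ to the compact case already handled, with the direct-limit quotient defining $\hat X_{\mc K}$ absorbing the choice of target compactum.

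For well-definedness, I would assume $g \sim_G h$ witnessed by $b \in G$ with $g \restriction N_b = h \restriction N_b$, set $K := g[S] \cup h[S] \in \mc K(X)$, and regard both $F_g, F_h$ as lying in $\hat K$. For every closed $D \sub K$ and every $b' \le b$, one has $N_{b'} \sub g^{-1}[D]$ iff $N_{b'} \sub h^{-1}[D]$, so $b \wedge \llbracket D \in \dot F_g \rrbracket_\B = b \wedge \llbracket D \in \dot F_h \rrbracket_\B$, whence $D \in F_g \Leftrightarrow D \in F_h$ in $V[G]$. For surjectivity, given $[(x, K)]$ with $x \in \hat K$ in $V[G]$, I would pick a $\B$-name $\dot x$ for $x$ and apply the duality to the compact Hausdorff space $K$ to produce $g := f_{\dot x} \in C(S, K)^V \sub C(S, X)^V$ with $F_g = x$ in $V[G]$ (as filters on $(\tau_K)_c$). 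Then $g[S] \sub K$ and, tracing definitions, the natural map $\hat \iota_{g[S], K}$ carries the filter $F_g$ on $(\tau_{g[S]})_c$ to the filter $F_g$ on $(\tau_K)_c$---both arise from the same Boolean values $\bigvee\{b' : N_{b'} \sub g^{-1}[D]\}$---so $k([g]) = [(F_g, g[S])] = [(x, K)]$ in the direct limit.

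The bulk of the work is injectivity. Assuming $k([g]) = k([h])$ and setting $K := g[S] \cup h[S]$, I extract $b \in G$ with $b \Vdash_\B \dot F_g = \dot F_h$ as names for filters on $(\tau_K)_c$, and aim to show $g \restriction N_b = h \restriction N_b$. If this fails at some $\mc U \in N_b$, I would use the Hausdorffness of $X$ to pick disjoint open $U_0 \ni g(\mc U)$ and $U_1 \ni h(\mc U)$, then use continuity of $g$ and $h$ to find $c \in \B$ with $\mc U \in N_c \sub N_b$, $g[N_c] \sub U_0$, and $h[N_c] \sub U_1$. Setting $D := K \setminus U_0$, the inclusion $h[N_c] \sub D$ yields $c \Vdash_\B D \in \dot F_h$, hence $c \Vdash_\B D \in \dot F_g$; but $\llbracket D \in \dot F_g \rrbracket_\B = \bigvee\{b' : N_{b'} \sub g^{-1}[D]\}$ by definition, and $g[N_c] \cap D = \emptyset$ forces this join to meet $c$ trivially, giving the needed contradiction. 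The hard part, or at least the most delicate part, will be this last translation between the Boolean value $\llbracket D \in \dot F_g \rrbracket_\B$ and the geometric condition on preimages of $D$ under $g$---together with verifying that the inclusion-induced maps $\hat\iota$ really do match the filters constructed on different compacta, which is essential for passing between $g[S]$ and $K$ in both the surjectivity and injectivity steps.
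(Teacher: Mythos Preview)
Your proposal is correct. The surjectivity argument matches the paper's exactly, and your well-definedness check is a welcome addition the paper omits.

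For injectivity the paper takes a slightly different route: rather than starting from $k([g]) = k([h])$ and extracting a forcing condition, it observes directly in $V$ that the set
\[
\{\, b \in \B \mid g \restriction N_b = h \restriction N_b \ \text{or}\ g[N_b] \cap h[N_b] = \emptyset \,\}
\]
is dense in $\B$, and then lets genericity of $G$ do the work. Your argument is essentially the contrapositive of this density claim unwound inside $V[G]$: your separation of $g(\mc U)$ and $h(\mc U)$ by $U_0, U_1$ and the resulting $c$ with $g[N_c] \subseteq U_0$, $h[N_c] \subseteq U_1$ is exactly what one would do to verify density below an arbitrary condition. The paper's phrasing is terser and stays in $V$; yours is more explicit about how the Boolean values $\llbracket D \in \dot F_g \rrbracket_{\B}$ interact with preimages, and about the compatibility of the filters under the maps $\hat\iota_{g[S],K}$, which the paper leaves implicit. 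Neither approach buys anything the other lacks; they are two presentations of the same idea.
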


\begin{proof}
    To show that $k$ is injective, fix two functions $g,h \in 
    C(S,X)^V$. Familiar arguments yield the fact that, in $V$, the set
    \[
      \{b \in \B^+ \mid g \restriction N_b = h \restriction N_b \text{ 
      \textbf{or} } g[N_b] \cap h[N_b] = \emptyset\}
    \]
    is dense in $\B^+$. The genericity of $G$ then yields the injectivity 
    of $k$. To show that $k$ is surjective, fix an element 
    $[(F,K)] \in \hat{X}_{\mc K}$, and let $\dot{F}$ be a $\B$-name 
    for $F$ that is forced by $1_{\B}$ to be a maximal filter on 
    $(\tau_K)_c$. Then $f_{\dot{F}}$, as defined at the beginning of 
    this section, is in $C(S,X)^V$, and it is routine to verify that 
    $k([f_{\dot{F}}]) = [(F,K)]$.
\end{proof}

The continuity of $k$ and $k^{-1}$ will now follow immediately from the 
following proposition.

\begin{proposition}
    For all $U \in \tau$, we have $k[N_U] = \hat{U}_{\mc{K}}$.
\end{proposition}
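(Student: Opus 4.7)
The plan is to unwind both sides of the equality $k[N_U] = \hat{U}_{\mc K}$ to a single condition on a closed set in $F_g$ and then prove both containments. First I would fix $g \in C(S,X)^V$ and let $K = g[S]$, so that $k([g]) = [(F_g, K)]$. By definition, $k([g]) \in \hat{U}_{\mc K}$ iff $F_g \in \hat{U}_K$, which in turn (from the definition of the interpretation $\hat{U}_K = \pi(U \cap K)$) is equivalent to $K \setminus U \notin F_g$. On the other side, $[g] \in N_U$ is, by definition, the statement that there exists $b \in G$ with $g[N_b] \subseteq U$, i.e.\ $N_b \subseteq g^{-1}[U]$.

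For the forward inclusion, suppose $[g] \in N_U$, witnessed by $b \in G$ with $g[N_b] \subseteq U$. Set $D := g[N_b]$. Since $N_b$ is compact and $g$ is continuous, $D$ is a compact, hence closed, subset of $K$ contained in $U$. By the definition of the name $\dot{F}_g$, the pair $(\check D, b)$ lies in $\dot{F}_g$, so $b \Vdash_{\B} D \in \dot{F}_g$, and therefore $D \in F_g$ in $V[G]$. Since $D \cap (K \setminus U) = \emptyset$ and $F_g$ is a filter on $(\tau_K)_c \setminus \{\emptyset\}$, it follows that $K \setminus U \notin F_g$, i.e., $F_g \in \hat{U}_K$, so $k([g]) \in \hat{U}_{\mc K}$.

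For the reverse inclusion, suppose $k([g]) \in \hat{U}_{\mc K}$, so $K \setminus U \notin F_g$. Here I would invoke the maximality of $F_g$ (established in the earlier lemma on $\dot{F}_{\mc U}$, applied in the extension): by the same density argument, there exists a closed $D \in F_g$ with $D \cap (K \setminus U) = \emptyset$, i.e.\ $D \subseteq U$. Since $D \in F_g$, the Boolean value $\llbracket \check D \in \dot{F}_g \rrbracket_{\B}$ lies in $G$; unwinding the definition of $\dot{F}_g$, this Boolean value is $\bigvee \{b \in \B \mid N_b \subseteq g^{-1}[D]\}$, so by genericity of $G$ we may choose $b \in G$ with $N_b \subseteq g^{-1}[D] \subseteq g^{-1}[U]$. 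Hence $g[N_b] \subseteq U$ and $[g] \in N_U$, completing the proof.

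The only mild subtlety is the reverse direction, where one must use the maximality of $F_g$ to replace the open condition ``$K \setminus U \notin F_g$'' with a positive witness ``$D \in F_g$ for some closed $D \subseteq U$''; everything else is a routine unwinding of definitions already established earlier in the section.
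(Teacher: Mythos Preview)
Your proof is correct and follows essentially the same strategy as the paper's. The only organizational difference is in the reverse inclusion: the paper argues the contrapositive by observing that the set $\{b \in \B \mid g[N_b] \subseteq U \text{ or } g[N_b] \subseteq K \setminus U\}$ is dense in $\B$, so genericity together with $[g] \notin N_U$ forces $K \setminus U \in F_g$; you instead cite the already-established maximality of $F_g$ to extract a closed $D \subseteq U$ with $D \in F_g$ and then unwind the Boolean value. These are equivalent moves (the density argument is exactly what underlies the earlier maximality lemma), so your version simply reuses prior work rather than repeating it.
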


\begin{proof}
    Suppose that $g \in C(S,X)^V$ and $[g] \in N_U$. Let 
    $K := g[S]$, and fix $b \in G$ 
    such that $g[N_b] \subseteq U$. Then, by construction, we have 
    $(K \setminus U) \notin F_g$, so $k([g]) = [(F_g, K)] \in 
    \hat{U}_{\mc K}$. Conversely, suppose that $g \in C(S,X)^V$ and 
    $[g] \notin N_U$. Again, let $K := g[S]$. By familiar arguments, 
    the following set is dense in $\B^+$ (in $V$):
    \[
      \{b \in \B^+ \mid g[N_b] \subseteq U \text{ \textbf{or} }
      g[N_b] \subseteq K \setminus U\}.
    \]
    Since $[g] \notin N_U$, the genericity of $G$ implies that 
    $K \setminus U \in F_g$, so $k([g]) = [(F_g, K))] 
    \notin \hat{U}_{\mc K}$.
\end{proof}

Altogether, we have established the following theorem.

\begin{theorem} \label{homeo_thm}
    Suppose that $X$ is a Hausdorff space, $\B$ is a complete Boolean algebra, and $G \subseteq \B$ is a generic 
    ultrafilter. Then, in $V[G]$, we have
    \[
      C(S(\B),X)^V/G \cong \hat{X}_{\mc K}.
    \]
\end{theorem}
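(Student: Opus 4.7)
The plan is to simply assemble the three propositions immediately preceding the theorem, which already do all of the real work; the theorem is the synthesis of that material. The candidate homeomorphism is the map $k : C(S(\B),X)^V/G \to \hat{X}_{\mc K}$ defined just above, sending $[g]$ to $[(F_g, g[S])]$. The preceding proposition shows $k$ is a bijection, and the proposition after it shows $k[N_U] = \hat{U}_{\mc K}$ for every $U \in \tau$. So only one verification remains: that $k$ and $k^{-1}$ are continuous.

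For continuity, I would check that the two distinguished collections
\[
  \mathcal{B}_1 := \{N_U \mid U \in \tau\}, \qquad \mathcal{B}_2 := \{\hat{U}_{\mc K} \mid U \in \tau\}
\]
are bases for their respective topologies, and that $k$ sends $\mathcal{B}_1$ bijectively onto $\mathcal{B}_2$. The first is $\mathcal{B}_1$, handled by the first of the three propositions. The second fact holds by the third proposition together with the observation that $k$ is a bijection. The only point not already made explicit is that $\mathcal{B}_2$ is a base, not just a subbase, for $\hat{\tau}_{\mc K}$. To see this, note that on each compact piece $K \in \mc K(X)$ the interpretation map $U \mapsto \hat{U}_K$ commutes with finite intersections (this is a standard property of the interpretation $(\hat{K}, \hat{\tau}_K)$ recalled in the remark at the start of this section), and the definition of $\hat{U}_{\mc K}$ via $[(x,K)] \in \hat{U}_{\mc K} \Leftrightarrow x \in \hat{U}_K$ shows that $\hat{U}_{\mc K} \cap \hat{V}_{\mc K} = \widehat{(U \cap V)}_{\mc K}$. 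Thus $\mathcal{B}_2$ is closed under finite intersections, and since it generates $\hat{\tau}_{\mc K}$ by definition, it is a base.

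Once both sides are known to be based by $\mathcal{B}_1$ and $\mathcal{B}_2$ respectively, a bijection that carries one base onto the other is automatically a homeomorphism, giving the asserted isomorphism $C(S(\B),X)^V/G \cong \hat{X}_{\mc K}$ in $V[G]$. There is no serious obstacle here: the substantive content lives in the three preceding propositions (and in the compactness/continuity arguments used to construct $f_{\dot{F}}$ and the base $\mathcal{B}_1$), so the theorem itself is essentially a bookkeeping step.
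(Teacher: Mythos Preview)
Your proposal is correct and matches the paper's approach exactly: the paper presents the three propositions and then simply states that ``Altogether, we have established the following theorem,'' with no separate proof environment. Your extra verification that $\mathcal{B}_2$ is a base rather than merely a subbase is harmless but unnecessary, since continuity of $k$ can be checked on a subbase and continuity of $k^{-1}$ follows from $k^{-1}[\hat{U}_{\mc K}] = N_U$ together with the fact that $\mathcal{B}_1$ is already known to be a base.
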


We end this section with a few remarks. First, as was mentioned above, 
if $X$ is a locally compact Hausdorff space, then $\hat{X}_{\mc{K}}$ 
is the canonical interpretation of $X$ in $V^{\B}$ in the sense of 
\cite{zapletal}. Recall also that the 
category $\mathsf{ED}$ is precisely the category 
of Stone spaces of complete Boolean algebras, and that $\ul{X}(S)$ equals, by definition, $C(S,X)$ for any $S \in \mathsf{ED}$. Therefore, the results of this section 
show that one way to think about the embedding of the category of 
locally compact Hausdorff spaces into $\Cond$ is to recognize that, as noted, for all such spaces $X$, the condensed set $\ul{X}$ is simply an organized presentation of all forcing names for points in 
the canonical interpretations of $X$ in all possible set forcing 
extensions.

Second, write $cg$ for the left adjoint of the inclusion of the category of compactly generated topological spaces into the category of topological spaces; recall also that the embedding $X\mapsto\ul{X}$ of the category of $T_1$ topological spaces into \textsf{Cond} possesses a left adjoint whose counit is given by $X\mapsto cg(X)$ (modulo a small technical point; see \cite[pp.\ 8-9 and Appendix to Lecture II]{CS1}).
Observe next that in the $\bb{B}=\{0,1\}$ (and hence $S(\bb{B})=*$) instance of Theorem \ref{homeo_thm}, $\hat{X}_{\mc K}$ is homeomorphic to $X$.
Had we endowed it instead with the $\hat{\tau}^{\lim}_{\mc K}$ topology considered above, it would be homeomorphic to $cg(X)$, and the $\hat{\tau}^{\lim}_{\mc K}$ topology corresponds more generally to that given by post-composing the conversion recorded in Theorem \ref{homeo_thm} with the functor $cg$.

Third, we note that the 
operation $X \mapsto \hat{X}_{\mc K}$ commutes with products. Namely, 
if $I$ is an index set and $X_i$ is a Hausdorff space for every $i \in I$, then 
$(\widehat{\prod_{i \in I} X_i})_{\mc K} \cong \prod_{i \in I} (\hat{X}_i)_{\mc K}$.
This is because the collection 
\[
  \{ \prod_{i \in I} K_i \mid \forall i \in I \ K_i \in \mc K(X_i)\}
\]
is $\subseteq$-cofinal in $\mc K(\prod_{i \in I} X_i)$.

Fourth, even though Theorem \ref{homeo_thm} holds for \emph{all} 
Hausdorff spaces $X$, there is an important sense in which the correspondence is 
meaningfully stronger in case $X$ is compact. In particular, let 
$\dot{k}$ be the $\B$-name for the homeomorphism we constructed to 
witness Theorem \ref{homeo_thm}. If $X$ is compact, 
then, for every $\B$-name $\dot{x}$ for an element of $\hat{X}_{\mc K} (= 
\hat{X})$, there is a $g \in C(S,X)$ such that $1_{\B} \Vdash \dot{k}([g]) = 
\dot{x}$. In particular, in this case we can take $\dot{x}$ to be a 
$\B$-name for a maximal filter on $\tau_c$, then let 
$g = f_{\dot{x}}$, as defined at the outset of this section.
However, in general this may not be the case, and there may be 
$\B$-names $\dot{x}$ for elements of $\hat{X}_{\mc K}$ for which there is 
\emph{no} $g \in C(S,X)$ such that $1_{\B} \Vdash \dot{k}([g]) = \dot{x}$. 
For such $\dot{x}$, one may need to work below some condition $b \in \B$ that 
forces $\dot{x}$ to lie inside the interpretation of some particular 
$K \in \mc K(X)$; once one does so, one may then find $g \in C(S,X)$ such 
that $b \Vdash \dot{k}([g]) = \dot{x}$. This issue will lead to some 
slight complications and asymmetries in the next section.

We end this section by noting that these observations are of wider significance than might initially be suspected, by reason of the variety of expressions within condensed mathematics in which sets of the form $C(S,X)$, sometimes covertly, partake.
The reader is referred to Lemma \ref{ext_calc_lemma} below, and its argument, for a representative instance.

\section{Compact interpretations and continuous functions}
\label{compact_sec}

Fix for now a complete Boolean algebra $\B$, and let $S = S(\B)$.
In this section, we establish an equivalence between the settings of 
$C(S,X)$ on the one hand and $\B$-names for elements of $\hat{X}_{\mc K}$ 
on the other, with respect to interpretations of certain sentences. 
Fix a positive integer $n$ and Hausdorff spaces $\langle X_i 
\mid i < n \rangle = \vec{X}$. We will specify a collection 
$\mc L(\vec{X})$ of formulas. We will do this by recursively defining 
$\mc L_j(\vec{X})$ for $j \leq n$ and then letting 
$\mc L(\vec{X}) := \bigcup_{j \leq n} \mc L_j(\vec{X})$.
For each $j \leq n$ and $\varphi \in \mc L_j(\vec{X})$, the 
free variables in $\varphi$ will be precisely $\{v_i \mid j \leq i < n\}$.

First, let $\mc L_0(\vec{X})$ consist of all formulas of the form 
$(v_0, v_1, \ldots, v_{n-1}) \in U$, where $U$ is an open subset of 
$\prod_{i < n} X_i$. Now suppose that $j < n$ and we have specified 
$\mc L_j(\vec{X})$. Then $\mc L_{j+1}(\vec{X})$ consists precisely of 
the formulas that can be formed in one of the following ways. First, 
fix a formula $\varphi(v_j, v_{j+1}, \ldots, v_{n-1}) \in \mc L_j(\vec{X})$. 
Then
\begin{enumerate}
    \item the formula $\forall x \varphi(x, v_{j+1}, \ldots, v_{n-1})$ 
    is in $\mc L_{j+1}(\vec{X})$;
    \item the formula $\exists x \varphi(x, v_{j+1}, \ldots, v_{n-1})$ 
    is in $\mc L_{j+1}(\vec{X})$;
    \item for every compact $K \subseteq X_j$, the formula 
    $\exists x \in K \varphi(x, v_{j+1}, \ldots, v_{n-1})$ is in 
    $\mc L_{j+1}(\vec{X})$.
\end{enumerate}
We call a formula $\varphi \in \mc L(\vec{X})$ \emph{bounded} if all of 
its existential quantifiers are bounded by compact sets (i.e., they arise 
through option (3) above rather than option (2)). 
In their most basic setting, that of 
$\prod_{i < n} X_i$, the free variables $v_i$ occurring in formulas in 
$\mc L(\vec{X})$ should be understood as standing for points in $X_i$, 
and the truth value of sentences arising from the substitution of these points 
for the free variables is evaluated in the natural way. But we also want to 
interpret these formulas in other contexts, namely in the context of 
$C(S, \prod_{i < n} X_i)$ and of $V^{\B}$. 

Let us first deal with $C(S, \prod_{i<n} X_i)$, where the free variables $v_i$ 
stand for elements of $C(S, X_i)$. We describe now what it means for 
$C(S, \prod_{i<n} X_i)$ to satisfy a sentence formed by substituting 
appropriate continuous functions for free variables in formulas in 
$\mc L(\vec{X})$. By induction on $j \leq n$, we will deal with 
$\mc L_j(\vec{X})$, in fact specifying what it means for a sentence to 
be satisfied \emph{below a condition $b \in \B$}.

Suppose that $\varphi \in \mc L_0(\vec{X})$, $b \in \B$, and, for all $i < n$, 
$f_i \in C(S, X_i)$. Then $\varphi$ is of the form $(v_0, \ldots, v_{n-1}) 
\in U$ for some open set $U \subseteq \prod_{i < n} X_i$. We say that 
$C(S, \prod_{i < n}X_i)$ \emph{satisfies $\varphi(f_0, \ldots, f_{n-1})$ below 
$b$}, written 
\[
  C(S, \prod_{i < n} X_i) \models_b \varphi(f_0, \ldots, f_{n-1}),
\]
if and only if the set $\{s \in N_b \mid (f_0(s), \ldots, f_{n-1}(s)) \in U\}$ is 
dense in $N_b$. If $b = 1_{\B}$, then it is omitted from the terminology and notation.

Now suppose that $j < n$, $\varphi \in \mc L_{j+1}$, $b \in \B$, and, for all 
$j < i < n$, $f_i \in C(S, X_i)$. 
\begin{itemize}
    \item If $\varphi$ is of the form $\forall x \psi(x, v_{j+1}, \ldots, v_{n-1})$, 
    then 
    \[ C(S, \prod_{i < n} X_i) \models_b \varphi(f_{j+1}, \ldots, f_{n-1}) \] if 
    and only if, for all $g \in C(N_b, X_j)$, we have \[ C(S, \prod_{i < n} X_i) 
    \models_b \psi(g, f_{j+1}, \ldots, f_{n-1}).\]
    \item If $\varphi$ is of the form $\exists x \psi(x, v_{j+1}, \ldots, v_{n-1})$, 
    then 
    \[ C(S, \prod_{i < n} X_i) \models_b \varphi(f_{j+1}, \ldots, f_{n-1}) \] if 
    and only if there exists $g \in C(N_b, X_j)$ such that \[ C(S, \prod_{i < n} X_i) 
    \models_b \psi(g, f_{j+1}, \ldots, f_{n-1}).\]
    \item If $K \subseteq X_j$ is compact and $\varphi$ is of the form $\exists x \in K 
    \psi(x, v_{j+1}, \ldots, v_{n-1})$, then 
    \[ C(S, \prod_{i < n} X_i) \models_b \varphi(f_{j+1}, \ldots, f_{n-1}) \] if 
    and only if there exists $g \in C(N_b, K)$ such that \[ C(S, \prod_{i < n} X_i) 
    \models_b \psi(g, f_{j+1}, \ldots, f_{n-1}).\]
\end{itemize}

In $V^{\B}$, the free variables $v_i$ stand for $\B$-names for elements of 
$(\hat{X}_i)_{\mc K}$. We now define satisfaction in this context. Suppose 
that $\varphi \in \mc L_0(\vec{X})$ is of the form $(v_0, \ldots, v_{n-1}) 
\in U$, $b \in \B$, and, for all $i < n$, $\dot{x}_i$ is a $\B$-name for 
an element of $(\hat{X}_i)_{\mc K}$. Then $V^{\B} \models_b 
\varphi(\dot{x}_0, \ldots, \dot{x}_{n-1})$ if and only if 
$b \Vdash_{\B} (\dot{x}_0, \ldots, \dot{x}_{n-1}) \in \hat{U}_{\mc K}$.

Now suppose that $j < n$, $\varphi \in \mc L_{j+1}$, $b \in \B$, and, for all 
$j < i < n$, $\dot{x}_i$ is a $\B$-name for an element of $(\hat{X}_i)_{\mc K}$.
\begin{itemize}
    \item If $\varphi$ is of the form $\forall x \psi(x, v_{j+1}, \ldots, v_{n-1})$, 
    then $V^{\B} \models_b \varphi(\dot{x}_{j+1}, \ldots, \dot{x}_{n-1})$ if and 
    only if, for every $\B$-name $\dot{y}$ for an element of $(\hat{X}_j)_{\mc K}$, 
    we have $V^{\B} \models_b \psi(\dot{y}, \dot{x}_{j+1}, \ldots, \dot{x}_{n-1})$.
    \item If $\varphi$ is of the form $\exists x \psi(x, v_{j+1}, \ldots, v_{n-1})$, 
    then $V^{\B} \models_b \varphi(\dot{x}_{j+1}, \ldots, \dot{x}_{n-1})$ if and 
    only if there exists a $\B$-name $\dot{y}$ for an element of 
    $(\hat{X}_j)_{\mc K}$
    such that $V^{\B} \models_b \psi(\dot{y}, \dot{x}_{j+1}, \ldots, \dot{x}_{n-1})$.
    \item If $K \subseteq X_j$ is compact and $\varphi$ is of the form $\exists x \in K 
    \psi(x, v_{j+1}, \ldots, v_{n-1})$, then $V^{\B} \models_b \varphi(\dot{x}_{j+1}, 
    \ldots, \dot{x}_{n-1})$ if and only if there exists a $\B$-name $\dot{y}$ such that 
    $b \Vdash_{\B} \dot{y} \in \hat{K}$ and $V^{\B} \models_b \psi(\dot{y}, 
    \dot{x}_{j+1}, \ldots, \dot{x}_{n-1})$.
\end{itemize}

Note that there is some monotonicity in these definitions. Namely, if 
$c \leq_{\B} b$ and $C(S,\prod_{i < n} X_i) \models_b \varphi$, then $C(S,\prod_{i < n} X_i) \models_c 
\varphi$, and similarly for $V^{\B}$.

Given $i < n$ and $f \in C(S,X_i)$, let $\dot{x}_f$ be a $\B$-name for 
$\dot{k}([f])$, where $\dot{k}$ is a name for the homeomorphism constructed 
to witness Theorem \ref{homeo_thm}. Concretely, if $K = f[S]$, then 
$\dot{x}_f$ is a $\B$-name for $[(\dot{F}_f, K)] \in \hat{X}_{\mc K}$.

\begin{theorem} \label{equivalence_thm}
    Suppose that $j \leq n$, $\varphi \in \mc L_j(\vec{X})$, $b \in \B$ and, 
    for all $j \leq i < n$, we have $f_i \in C(S,X_i)$. 
    \begin{enumerate}
        \item If $C(S, \prod_{i < n} X_i) \models_b \varphi(f_j, \ldots, 
        f_{n-1})$, then $V^{\B} \models_b \varphi(\dot{x}_{f_j}, \ldots, 
        \dot{x}_{f_{n-1}})$.
        \item If $\varphi$ is bounded and $V^{\B} \models_b \varphi(\dot{x}_{f_j}, \ldots, 
        \dot{x}_{f_{n-1}})$, then $C(S, \prod_{i < n} X_i) \models_b \varphi(f_j, \ldots, 
        f_{n-1})$.
    \end{enumerate}
\end{theorem}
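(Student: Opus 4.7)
The plan is to prove (1) and (2) simultaneously by induction on $j$, treating the three quantifier constructors that build $\mc L_{j+1}(\vec{X})$ from $\mc L_j(\vec{X})$ case by case. For the base case $j=0$, the formula is $(v_0,\dots,v_{n-1})\in U$ with $U\subseteq \prod_{i<n}X_i$ open. Setting $f=(f_0,\dots,f_{n-1})\in C(S,\prod_i X_i)$ and using the product identification $(\widehat{\prod_i X_i})_{\mc K}\cong \prod_i(\hat{X}_i)_{\mc K}$ noted at the end of Section \ref{top_spaces_sec}, the tuple $(\dot{x}_{f_0},\dots,\dot{x}_{f_{n-1}})$ corresponds to $\dot{x}_f$. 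Unpacking the construction of $\dot{F}_f$ from Section \ref{top_spaces_sec} and the definition of $\hat{U}_{\mc K}$, the statement $b\Vdash_{\B}\dot{x}_f\in \hat{U}_{\mc K}$ is equivalent to the density of $f^{-1}[U]$ in $N_b$, which is precisely $C(S,\prod_i X_i)\models_b \varphi(f_0,\dots,f_{n-1})$. Both directions follow, and the formula is trivially bounded.

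For the inductive step I would dispatch the two $\exists$-cases and the $\forall$-case in turn. For unbounded $\exists x\,\psi$, Part (1) is a direct translation of a witnessing $g\in C(N_b,X_j)$ into the $\B$-name $\dot{x}_g$ via the inductive hypothesis on $\psi$, and Part (2) does not apply since $\varphi$ is not bounded. For compact-bounded $\exists x\in K\,\psi$, Part (1) is again immediate; Part (2) invokes Theorem \ref{homeo_thm} relativized below $b$: the hypothesis supplies a $\B$-name $\dot{y}$ with $b\Vdash_{\B}\dot{y}\in\hat{K}$, and since $K$ is compact, $\dot{y}$ is realized by some $g\in C(N_b,K)$ with $b\Vdash_{\B}\dot{y}=\dot{x}_g$, allowing the inductive hypothesis applied to the (bounded) $\psi$ to transfer satisfaction back to $C(S,\prod_i X_i)$. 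For $\forall x\,\psi$, Part (2) is straightforward: instantiate the $V^{\B}$-hypothesis at $\dot{x}_g$ for each $g\in C(N_b,X_j)$ and invoke the inductive hypothesis on $\psi$.

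The one genuinely subtle case is Part (1) for $\forall x\,\psi$, which reflects the asymmetry between $\B$-names and continuous functions flagged at the end of Section \ref{top_spaces_sec}: an arbitrary $\B$-name $\dot{y}$ for a point in $(\hat{X}_j)_{\mc K}$ need not be globally represented by any $g\in C(S,X_j)$. The plan is to exploit the direct-limit structure of $(\hat{X}_j)_{\mc K}$: the set $D_{\dot{y}}:=\{c\leq b\mid \exists K\in\mc K(X_j)\; c\Vdash_{\B}\dot{y}\in\hat{K}\}$ is dense below $b$, and below each such $c$ with witness $K$, Theorem \ref{homeo_thm} relativized to $N_c$ produces $g\in C(N_c,X_j)$ with $c\Vdash_{\B}\dot{x}_g=\dot{y}$. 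Monotonicity of $\models$, the inductive hypothesis (1) applied to $\psi$, and density in $\B$ then chain together: from $C\models_b\forall x\,\psi$ one extracts $C\models_c\psi(g,f_{j+1},\dots,f_{n-1})$, next $V^{\B}\models_c\psi(\dot{y},\dot{x}_{f_{j+1}},\dots)$, and finally $V^{\B}\models_b\psi(\dot{y},\dot{x}_{f_{j+1}},\dots)$. This density-and-relativization manoeuvre is the main obstacle; it is the mechanism by which Part (1) circumvents the absence of a global representation theorem, while in Part (2) the hypothesis of boundedness does the analogous work by confining the relevant names to a compact interpretation from the outset.
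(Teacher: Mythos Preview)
Your proposal is correct and follows essentially the same approach as the paper's own proof: induction on $j$, the base case reduced via the product identification to the equivalence of $b \Vdash_{\B} \dot{x}_f \in \hat{U}_{\mc K}$ with density of $f^{-1}[U]$ in $N_b$, and the inductive step split by quantifier type with the $\forall$ case of Part~(1) handled by passing to conditions $c \leq b$ that pin $\dot{y}$ inside some $\hat{K}$ so that it can be represented by a continuous function. The only difference is cosmetic: the paper argues the $\forall$ case of Part~(1) by contradiction (assume $V^{\B} \not\models_b$, locate a bad $c$), whereas you argue it directly via density of $D_{\dot{y}}$; both routes implicitly rely on the routine fact that $V^{\B} \models_c \psi$ on a dense set below $b$ yields $V^{\B} \models_b \psi$, which itself is a short induction on the complexity of $\psi$ (with a gluing of names over a maximal antichain in the existential case).
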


\begin{proof}
    The proof is by induction on $j$. Suppose first that $j = 0$. For notational 
    convenience, we can assume here that $n = 1$ by considering the single space 
    $X = \prod_{i < n} X_i$ (recalling that $\widehat{(\prod_{i < n} X_i)}_{\mc K} = \prod_{i < n} (\hat{X}_i)_{\mc K}$). Then there is an open set $U \subseteq X$ such that $\varphi$ is the 
    formula $v_0 \in U$, and we are given a function $f \in C(S,X)$. Let 
    $K := \mathrm{im}(f)$, so $K$ is a compact subset of $X$, and $\dot{x}_f$ is forced to 
    be of the form $[(\dot{F}_f, K)]$.

    To verify (1), suppose that $C(S,X) \models_b \varphi(f)$, and hence 
    $\{s \in N_b \mid f(s) \in U\}$ is dense in $N_b$. Suppose for the sake 
    of contradiction that $b \not\Vdash_{\B} \dot{x}_f \in \hat{U}$. Then, by the definition 
    of $\dot{F}_f$, there is $c \leq b$ such that $(\check{K} \setminus \check{U}, c) \in 
    \dot{F}_f$, and therefore, again by the definition of $\dot{F}_f$, we have 
    $N_c \subseteq f^{-1}[K \setminus U]$, which is a contradiction.

    For (2), suppose that $b \Vdash_{\B} \dot{x}_f \in \hat{U}$, i.e., 
    $b \Vdash_{\B} (K \setminus U) \notin \dot{F}_f$. Suppose for the sake of 
    contradiction that $\{s \in N_b \mid f(s) \in U\}$ is not dense in $N_b$. 
    Then there is $c \leq b$ such that $N_c \subseteq f^{-1}[K \setminus U]$. 
    But then $c \Vdash_{\B} (K \setminus U) \in \dot{F}_f$, which is a contradiction.

    Now suppose that $j = j' + 1$ for some $j' < n$. To verify (1),
    suppose that $C(S,X) \models_b \varphi(f_{j}, \ldots f_{n-1})$ but, for the 
    sake of contradiction, assume that $V^{\B} \not\models_b \varphi(\dot{x}_{f_j}, 
    \ldots, \dot{x}_{f_{n-1}})$. Suppose first 
    that $\varphi$ is of the form $\forall x \psi$ for some $\psi \in \mc L_{j'}(\vec{X})$. 
    By assumption, there is a $\B$-name $\dot{y}$ for an element of $(\hat{X}_{j'})_{\mc K}$ 
    such that $V^\B \not\models_b \psi(\dot{y}, \dot{x}_{f_j}, \ldots, 
    \dot{x}_{f_{n-1}})$. We can then find a condition $c \leq_{\B} b$ and a compact 
    set $K \subseteq X_j$ such that $V^{\B} \not\models_c \varphi(\dot{x}_{f_j}, 
    \ldots, \dot{x}_{f_{n-1}})$ and $c \Vdash_{\B} \dot{y} \in \hat{K}$. 
    We can therefore find a function $g \in C(S,K)$ such that 
    $c \Vdash_{\B} \dot{y} = [(\dot{F}_g, \hat{K})] = \dot{x}_g$.
    Then $V^{\B} \not\models_c \psi(\dot{x}_g, \dot{x}_{f_j}, \ldots, 
    \dot{x}_{f_{n-1}})$, so by the induction hypothesis we have $C(S,X) 
    \not\models_c \psi(g, f_j, \ldots, f_{n-1})$, contradicting the assumption 
    that $C(S,X) \models_b \varphi(f_j, \ldots, f_{n-1})$.

    Suppose next that $\varphi$ is of the form $\exists x \psi$ for some 
    $\psi \in \mc L_{j'}(\vec{X})$. Then we can fix $g \in C(S, X_{j'})$ 
    such that $C(S, \vec{X}) \models_b \psi(g, f_j, \ldots, f_{n-1})$. 
    By the induction hypothesis, it follows that $V^{\B} \models 
    \psi(\dot{x}_g, \dot{x}_{f_j}, \ldots, \dot{x}_{f_{n-1}})$, so 
    $V^{\B} \models \varphi(\dot{x}_{f_j}, \ldots, \dot{x}_{f_{n-1}})$. 
    The case in which $\varphi$ is of the form $\exists x \in K \psi$ 
    is the same.

    To verify (2), suppose that $\varphi$ is bounded and 
    $V^{\B} \models_b \varphi(\dot{x}_{f_j}, \ldots, \dot{x}_{f_{n-1}})$. 
    Assume first that $\varphi$ is of the form $\forall x \psi$ for some 
    (bounded) $\psi \in \mc L_{j'}(\vec{X})$. Then, for all $g \in 
    C(S, X_{j'})$, we have $V^{\B} \models_b \psi(\dot{x}_g, \dot{x}_{f_j}, 
    \ldots, \dot{x}_{f_{n-1}})$, so, by the induction hypothesis, we have 
    $C(S,\vec{X}) \models_b \psi(g, f_j, \ldots, f_{n-1})$. Since $g$ was 
    arbitrary, we have $C(S,\vec{X}) \models_b \varphi(f_j, \ldots, f_{n-1})$.

    Finally, assume that $\varphi$ is of the form $\exists x \in K \psi$ 
    for some compact $K \subseteq X_{j'}$ and some (bounded) $\psi \in 
    \mc L_{j'}(\vec{X})$. Then we can find a $\B$-name $\dot{y}$ for 
    an element of $\hat{K}$ such that $V^{\B} \models_b \psi(\dot{y}, 
    \dot{x}_{f_j}, \ldots, \dot{x}_{f_{n-1}})$. We can then find a function 
    $g \in C(S, K)$ such that $b \Vdash_{\B} \dot{y} = \dot{x}_g$. 
    Then $V^{\B} \models_b \psi(\dot{x}_g, \dot{x}_{f_j}, \ldots, 
    \dot{x}_{f_{n-1}})$, so, by the induction hypothesis, we have 
    $C(S, \vec{X}) \models_b \psi(g, f_j, \ldots, f_{n-1})$. Since 
    $g \in C(S,K)$, it follows that $C(S, \vec{X}) \models_b \varphi(f_j, 
    \ldots, f_{n-1})$, as desired.
\end{proof}

We now finally return to Whitehead's problem, where the work of this and 
the previous section allows us to establish the following general result.

\begin{theorem} \label{forcing_thm}
    Suppose that $A$ is an abelian group and $0 \ra K \xrightarrow{\subseteq} F \ra A \ra 0$
    is a free resolution of $A$. Let $B_K$ be a basis for $K$, and let $e:B_K \ra \omega$ be a function. 
    Suppose that $\B$ is a complete Boolean algebra that forces the 
    existence of a homomorphism $\tau : K \ra \Z$ such that 
    \begin{itemize}
        \item in $V^{\B}$, $\tau$ does not lift to a homomorphism 
        $\nu : F \ra \Z$;
        \item for all $t \in B_K$, we have $|\tau(t)| \leq e(t)$.
    \end{itemize}
    Then, in $V$, we have $\iExt^1_{\CondAb}(\ul{A}, \ul{\Z})(S(\B)) 
    \neq 0$.
\end{theorem}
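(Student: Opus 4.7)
The plan is to invoke the characterization recorded at the end of Section~\ref{prelim_sec}: namely, to exhibit a continuous $\varphi : S(\B) \to \Hom(K,\Z)$ that does not lift pointwise to a continuous $\psi : S(\B) \to \Hom(F,\Z)$. The candidate $\varphi$ will come from the forced homomorphism $\tau$, and the failure of liftability in $V^{\B}$ will translate to failure of liftability in $V$ via the name/continuous-function correspondence of Section~\ref{top_spaces_sec}.

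To construct $\varphi$, fix a $\B$-name $\dot\tau$ witnessing the hypothesis, and note that the bound $|\dot\tau(t)| \leq e(t)$ places $\dot\tau \restriction B_K$ inside the compact product $P := \prod_{t \in B_K}[-e(t), e(t)]$; identifying $\Hom(K,\Z)$ with $\Z^{B_K}$ via the basis, we regard $P$ as a compact subset of $\Hom(K,\Z)$. Since $\dot\tau \restriction B_K$ is a $\B$-name for an element of $\hat{P}$ and $P$ is compact, the compact case of Theorem~\ref{homeo_thm} (cf.\ the remark near the end of Section~\ref{top_spaces_sec}) supplies a continuous $g : S(\B) \to P$ corresponding to it; let $\varphi$ be the composite of $g$ with the inclusion $P \hookrightarrow \Hom(K,\Z)$.

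Now suppose toward a contradiction that $\psi : S(\B) \to \Hom(F,\Z)$ is continuous and satisfies $\psi(s)\restriction K = \varphi(s)$ for all $s \in S(\B)$. For each $x \in F$ the evaluation $\psi_x : s \mapsto \psi(s)(x)$ is a continuous map $S(\B) \to \Z$; since $\Z$ is discrete and $S(\B)$ is compact, the image $\psi_x[S(\B)]$ is finite, so $\psi_x$ factors through some finite interval $[-N_x, N_x] \subseteq \Z$. A second application of Theorem~\ref{homeo_thm} in the compact case yields a $\B$-name $\dot\nu(x)$ for an integer in $[-N_x, N_x]$; assembling these over $x \in F$ produces a $\B$-name $\dot\nu$ for a function $F \to \Z$.

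It remains to verify that $\dot\nu$ is forced to be a homomorphism extending $\dot\tau$, which will contradict the hypothesis on $\dot\tau$ and complete the proof. Both verifications follow the template of the Claims inside the proof of Theorem~\ref{thm:stone}: the pointwise identity $\psi_{x+y}(s) = \psi_x(s) + \psi_y(s)$, valid because each $\psi(s)$ is a homomorphism, forces $\dot\nu(x+y) = \dot\nu(x) + \dot\nu(y)$; and the pointwise identity $\psi_t(s) = \varphi(s)(t)$ for $t \in B_K$, valid because $\psi$ lifts $\varphi$, forces $\dot\nu(t) = \dot\tau(t)$. The main delicacy is that $\Hom(F,\Z)$ is not itself compact, so the correspondence of Section~\ref{top_spaces_sec} cannot be applied to $\psi$ wholesale; one must instead work one evaluation at a time, exploiting the compactness of each image $\psi_x[S(\B)]$ to remain in the regime where Theorem~\ref{homeo_thm} applies cleanly. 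Alternatively, one could invoke Theorem~\ref{equivalence_thm} applied to the bounded formulas expressing $\nu(x+y)=\nu(x)+\nu(y)$ and $\nu(t)=\tau(t)$, below conditions of $\B$ fixing the relevant finite intervals of values.
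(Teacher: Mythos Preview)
Your proposal is correct and follows essentially the same underlying idea as the paper: use the bound $|\tau(t)|\leq e(t)$ to place $\dot\tau$ in a compact product, convert to a continuous $\varphi:S(\B)\to\Hom(K,\Z)$, and then show that any continuous lift $\psi$ would yield a $\B$-name for a homomorphism $F\to\Z$ extending $\tau$. The difference is one of packaging. The paper encodes the whole argument as a single bounded sentence $\exists x\in K^*\,\forall y\,(\pi(y)\neq x)$ in $\mc L(\vec X)$ and invokes Theorem~\ref{equivalence_thm} once to transfer its truth from $V^{\B}$ to $C(S(\B),X_F\times X_K)$; you instead unwind the correspondence by hand, one coordinate $\psi_x$ at a time, mirroring the claims inside Theorem~\ref{thm:stone}. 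Your version is more self-contained and does not rely on the full satisfaction machinery of Section~\ref{compact_sec}; the paper's version is shorter once that machinery is in place and makes explicit why the boundedness hypothesis on $\tau$ is exactly what is needed (it is what makes the relevant sentence bounded in the sense of Theorem~\ref{equivalence_thm}). You even note this alternative at the end of your argument, so you have clearly identified both routes.
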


\begin{proof}
    Let $B_F$ be a basis for $F$, and note that elements of $\Hom(K,\Z)$ 
    and $\Hom(F,\Z)$ can be identified with elements of ${^{B_K}}\Z$ and 
    ${^{B_F}}\Z$, respectively, and the restriction map from $\Hom(F,\Z)$ 
    to $\Hom(K,\Z)$ induces a continuous map $\pi : {^{B_F}}\Z \ra {^{B_K}}\Z$.
    Note that the assertion that $A$ is not Whitehead is 
    precisely the assertion that $\pi$ is not surjective, i.e., it corresponds to 
    the sentence $\exists x \in {^{B_K}}\Z ~ \forall y \in {^{B_F}}\Z ~ 
    \pi(y) \neq x$. Noting that the set $U := \{(y,x) \in {^{B_F}}\Z \times 
    {^{B_K}}\Z \mid \pi(y) \neq x\}$ is an open subset of ${^{B_F}}\Z 
    \times {^{B_K}}\Z$, we see that this sentence is an element of 
    $\mc L(({^{B_F}}\Z, {^{B_K}}\Z))$.

    By assumption, $A$ is not Whitehead in $V^{\B}$, and in fact more 
    is true. Let $\hat{\pi}$ be the extension of $\pi$ to 
    $V^{\B}$ induced by the restriction map from $\Hom(F,\Z)$ 
    to $\Hom(K,\Z)$. Then $\B$ forces that there is an element of 
    $\prod_{t \in B_K} [-e(t),e(t)]$  that is not in the range of $\pi$. 
    Let $X_K$ and $X_F$ 
    denote ${^{B_K}}\Z$ and ${^{B_F}}\Z$, respectively, and note that 
    $K^* := \prod_{t \in B_K} [-e(t),e(t)]$ is a compact subset 
    of $X_K$. Let $\varphi$ be the (bounded) sentence 
    $\exists x \in K^* \forall y \in X_F ~ \pi(y) \neq x$. 
    It is readily verified that the following facts hold in $V^{\B}$.
    \begin{itemize}
        \item $(\hat{X}_K)_{\mc K}$ is (homeomorphic to) the set of all 
        $x \in ({^{B_K}}\Z)^{V^{\B}}$ such that there exists 
        $e : B_K \ra \omega$ in $V$ such that $|x(t)| \leq e(t)$ 
        for all $t \in B_K$. A similar characterization holds 
        for $(\hat{X}_F)_{\mc K}$.
        \item $\hat{U}_{\mc K} = \{(y,x) \in (\hat{X}_K)_{\mc K} 
        \times (\hat{X}_F)_{\mc K} \mid \hat{\pi}(y) \neq x\}$.
    \end{itemize}
    By assumption, we then have $V^{\B} \models \varphi$. By 
    Theorem \ref{equivalence_thm}, it follows that 
    $C(S(\B),X_F \times X_K) \models \varphi$, and in fact 
    $C(S(\B), X_F \times K^*) \models \varphi$, i.e., there is a continuous 
    function $f:S(\B) \ra K^*$ such that, for all continuous 
    $g:S(\B) \ra X_F$, the set of $s \in S(\B)$ such that $\pi(g(s)) \neq f(s)$ 
    is dense in $S(\B)$. But this immediately implies that 
    $\iExt^1_{\CondAb}(\ul{A}, \ul{\Z})(S(\B)) \neq 0$, completing the proof 
    of the theorem.
\end{proof}

We now immediately obtain the following corollary, establishing clause (2) 
of Theorem \ref{main_thm}.

\begin{corollary} \label{forcing_cor}
    Suppose that $A$ is a nonfree abelian group and $\kappa$ is the 
    least cardinality of a nonfree subgroup of $A$. Let 
    $S$ be the Stone space of the Boolean completion of the forcing to 
    add $\kappa$-many Cohen reals. Then $\iExt^1_{\CondAb}(\ul{A}, \ul{\Z})(S) 
    \neq \emptyset$.
\end{corollary}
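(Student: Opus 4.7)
The plan is to apply Theorem \ref{forcing_thm} directly, using the witness supplied by Theorem \ref{thm:cohen}. Concretely, fix a free resolution $0 \to K \xrightarrow{\subseteq} F \to A \to 0$ and a basis $B_K$ for $K$, and let $\B$ be the Boolean completion of $\Add(\omega,\kappa)$, so that $S = S(\B)$. Since $\kappa$ is, by hypothesis, the least cardinality of a nonfree subgroup of $A$, the inputs of Theorem \ref{thm:cohen} are in place.

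Theorem \ref{thm:cohen} then produces, in $V^\B$, a homomorphism $\tau : K \to \Z$ with $\tau[B_K] \subseteq \{0,1\}$ such that no element of $\Hom(F,\Z)$ extends $\tau$ in any ccc outer model of $V^\B$; in particular, $\tau$ admits no such extension in $V^\B$ itself. To match the format of Theorem \ref{forcing_thm}, I would take $e : B_K \to \omega$ to be the constant function $e \equiv 1$, so that $|\tau(t)| \leq 1 = e(t)$ for every $t \in B_K$. Both bullet points in the hypothesis of Theorem \ref{forcing_thm} are then satisfied, and the conclusion $\iExt^1_{\CondAb}(\ul{A},\ul{\Z})(S) \neq 0$ follows at once.

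There is no substantive obstacle here: Theorem \ref{forcing_thm} and Theorem \ref{thm:cohen} have been arranged so that the corollary is essentially an immediate pairing of the two. The only minor point to verify is the standard fact that $\Add(\omega,\kappa)$ and its Boolean completion $\B$ are forcing equivalent, so that the $\Add(\omega,\kappa)$-name for $\tau$ produced in Theorem \ref{thm:cohen} can legitimately be regarded as a $\B$-name of the sort required by Theorem \ref{forcing_thm}.
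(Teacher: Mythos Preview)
Your proposal is correct and follows essentially the same approach as the paper: fix a free resolution and basis $B_K$, invoke Theorem~\ref{thm:cohen} to obtain a $\B$-name for a homomorphism $\tau$ with $\tau[B_K]\subseteq\{0,1\}$ that does not extend, and then apply Theorem~\ref{forcing_thm}. Your explicit choice of $e\equiv 1$ and the remark on the forcing equivalence of $\Add(\omega,\kappa)$ with its Boolean completion are helpful clarifications but do not alter the structure of the argument.
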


\begin{proof}
    Let $\B$ denote the Boolean completion of the forcing to add 
    $\kappa$-many Cohen reals, let $0 \ra K \xrightarrow{\subseteq} F \ra A \ra 0$ 
    be a free resolution of $A$, and let $B_K$ be a basis for $K$. 
    By Theorem \ref{thm:cohen}, $\B$ forces the existence of a 
    homomorphism $\tau:K \ra \Z$ such that
    \begin{itemize}
        \item in $V^{\B}$, $\tau$ does not lift to a homomorphism 
        $\nu : F \ra \Z$; and
        \item for all $t \in B_K$, $\tau(t) \in \{0,1\}$.
    \end{itemize}
    Therefore, Theorem \ref{forcing_thm} implies that 
    $\iExt^1_{\CondAb}(\ul{A}, \ul{\Z})(S) \neq 0$, as desired.
\end{proof}

We end this section with some observations regarding forcing extensions 
in which Whitehead's problem has a negative answer. We first note 
that the converse of Theorem \ref{forcing_thm} is not true in general; 
namely, if $A$ is an abelian group and $\B$ is a complete 
Boolean algebra, then it does not 
follow from $\iExt^1_{\CondAb}(\ul{A}, \ul{\Z})(S(\B)) \neq 0$ 
that $A$ is not Whitehead in $V^{\B}$. To see this, first recall 
that an abelian group $A$ is $\aleph_1$-free if all of its 
countable subgroups are free, and a subgroup $B$ of an $\aleph_1$-free 
abelian group $A$ is said to be \emph{$\aleph_1$-pure} if $A/B$ is 
$\aleph_1$-free. An abelian group $A$ satisfies \emph{Chase's 
condition} if $A$ is $\aleph_1$-free and every countable 
subgroup of $A$ is contained in a countable $\aleph_1$-pure 
subgroup of $A$. In \cite{griffith}, Griffith showed that one can 
construct in $\ZFC$ an abelian group of size $\aleph_1$ that is 
not free but satisfies Chase's condition. One half of Shelah's 
independence result can then be stated as follows.

\begin{theorem}[Shelah, {\cite{Shelah_infinite_74}}, cf.\ {\cite{eklof}}]
    \label{shelah_ma_thm}
    Suppose that $\MA_{\omega_1}$ holds and $A$ is an abelian 
    group of size $\aleph_1$ that satisfies Chase's condition. 
    Then $A$ is Whitehead.
\end{theorem}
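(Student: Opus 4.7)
The plan is to show that every short exact sequence $0\to\Z\to E\xrightarrow{\pi}A\to 0$ splits, by applying $\MA_{\omega_1}$ to a suitable ccc forcing of partial splittings. Combining Chase's condition with $\aleph_1$-freeness, I would first write $A$ as a continuous increasing union $A=\bigcup_{\alpha<\omega_1}A_\alpha$ in which each $A_\alpha$ is countable, free, and $\aleph_1$-pure in $A$. Each restricted sequence $0\to\Z\to\pi^{-1}(A_\alpha)\to A_\alpha\to 0$ then splits by freeness of $A_\alpha$; fix sections $\sigma_\alpha:A_\alpha\to E$. The obstruction to gluing these into a global section is the $1$-cocycle $h_{\alpha\beta}:=\sigma_\beta|_{A_\alpha}-\sigma_\alpha\in\Hom(A_\alpha,\Z)$ for $\alpha<\beta$, and a global section exists precisely when one can find $\tau_\alpha\in\Hom(A_\alpha,\Z)$ with $\tau_\beta|_{A_\alpha}-\tau_\alpha=h_{\alpha\beta}$ for all $\alpha<\beta$: then $\alpha\mapsto\sigma_\alpha+\tau_\alpha$ coheres into a section of $\pi$.

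To force such a family I would introduce the poset $P$ whose conditions are pairs $p=(a_p,(\tau^p_\alpha)_{\alpha\in a_p})$ with $a_p\in[\omega_1]^{<\omega}$ and each $\tau^p_\alpha$ a finite partial function $A_\alpha\to\Z$ that extends to a group homomorphism, ordered by componentwise extension and subject to the compatibility $\tau^p_\beta(x)-\tau^p_\alpha(x)=h_{\alpha\beta}(x)$ whenever $\alpha<\beta$ in $a_p$ and $x\in A_\alpha\cap\dom(\tau^p_\alpha)\cap\dom(\tau^p_\beta)$. Density of the sets $D_{\alpha,x}:=\{p\in P\mid \alpha\in a_p,\ x\in\dom(\tau^p_\alpha)\}$ below any condition follows from $\aleph_1$-purity: for $\gamma:=\max(a_p\cap\alpha)$, the quotient $A_\alpha/A_\gamma$ is countable and $\aleph_1$-free, hence free, so the finitely many values of $\tau^p_\alpha$ prescribed by transferring $\tau^p_\gamma$ through $h_{\gamma\alpha}$ extend consistently to a partial homomorphism with $x$ in its domain, the compatibility of the constraints inherited from distinct members of $a_p$ being guaranteed by the cocycle identity for the $h_{\alpha\beta}$.

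The main obstacle is verifying that $P$ is ccc, and this is the technical core of Shelah's argument. Given $\aleph_1$ conditions $\{p_\xi:\xi<\omega_1\}$, I would apply the $\Delta$-system lemma to thin the supports $a_{p_\xi}$ to a common root $a^*$, and then use the countability of the finite partial data on each $A_\alpha$ with $\alpha\in a^*$, together with the countability of the ``finite type'' of a condition above $a^*$ (order-type of $a_p$, isomorphism class of the resulting finite configuration of subgroups of the $A_\alpha$'s, and $\Z$-values assigned), to pigeonhole to an uncountable subfamily on which $p_\xi\restriction a^*$ is constant and the types above $a^*$ are uniform. Any two such conditions $p_\xi,p_\eta$ are then compatible: their data agree on $a^*$ by construction, and the finitely many cross-pair cocycle constraints between $\alpha\in a_{p_\xi}\setminus a^*$ and $\beta\in a_{p_\eta}\setminus a^*$ can be met by further enlarging the partial functions, a task reducing to solvability of a finite $\Z$-linear system in a free quotient $A_{\gamma}/A_{\max(a^*)}$, assured by $\aleph_1$-purity. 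With ccc established, $\MA_{\omega_1}$ applied to $P$ and to the $\aleph_1$ dense sets $D_{\alpha,x}$ yields a generic filter whose union provides the coherent corrections $(\tau_\alpha)_{\alpha<\omega_1}$, and hence a section of $\pi$, showing $\Ext^1(A,\Z)=0$.
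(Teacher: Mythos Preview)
The paper does not give a self-contained proof of this classical theorem; it is cited to Shelah and Eklof. The paper does, however, prove a strict generalization in Section~\ref{ma_sec} (the $\CondAb_{\omega_1}$ version), and specializing that argument to $S=\ast$ yields a proof of the present statement. That proof uses a different poset from yours: conditions are pairs $(A_p,\psi_p)$ with $A_p$ a \emph{finitely generated pure} subgroup of $A$ and $\psi_p$ a partial lifting defined on $F_{A_p}$, and the ccc argument hinges on \cite[Lemma~7.5]{eklof}, which produces a single countable free $\aleph_1$-pure subgroup $A'$ containing all the $A_{p_\eta}$, allowing one to run a $\Delta$-system argument on bases inside $A'$.

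Your cocycle-on-a-filtration reformulation is a legitimate alternative viewpoint, but as written it has two problems. First, a \emph{continuous} increasing filtration $A=\bigcup_{\alpha<\omega_1}A_\alpha$ with every $A_\alpha$ $\aleph_1$-pure cannot exist when $A$ is nonfree: if every $A/A_\alpha$ were $\aleph_1$-free then each countable quotient $A_{\alpha+1}/A_\alpha$ would be free, and the continuous filtration would exhibit $A$ itself as free. Chase's condition lets you build an increasing (non-continuous) $\omega_1$-chain of countable $\aleph_1$-pure subgroups with union $A$, and that suffices for the cocycle setup, but you should drop the word ``continuous''.

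Second, and more seriously, your ccc sketch does not handle the cross-constraints. After the $\Delta$-system and pigeonholing on the root $a^*$, take $\alpha\in a_{p_\xi}\setminus a^*$ and $\beta\in a_{p_\eta}\setminus a^*$ with $\alpha<\beta$. Any common extension must satisfy $\tau^{p_\eta}_\beta(x)-\tau^{p_\xi}_\alpha(x)=h_{\alpha\beta}(x)$ for every $x\in\dom(\tau^{p_\xi}_\alpha)\cap\dom(\tau^{p_\eta}_\beta)$. For $x\in A_{\max(a^*)}$ this follows from the cocycle identity and the agreement on the root, but nothing prevents the finite set $\dom(\tau^{p_\eta}_\beta)$ from meeting $A_\alpha\setminus A_{\max(a^*)}$, and on such $x$ both values are already fixed by $p_\xi$ and $p_\eta$ separately; you cannot ``meet the constraint by enlarging the partial functions''. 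Making this work requires either substantially more bookkeeping in the pigeonholing (controlling how the finite domains sit relative to the intermediate $A_\alpha$'s, which is delicate since $p_\eta$ knows nothing about $\alpha$) or switching to a poset closer to the paper's, where conditions live on finitely generated pure subgroups rather than on levels of a fixed filtration. The latter is exactly what makes the paper's (and Eklof's) ccc argument go through cleanly.
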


Now suppose that $V = L$, and let $A$ be an abelian group of 
size $\aleph_1$ that satisfies Chase's condition but is not free. 
For concreteness, suppose it is constructed as in 
\cite[Theorem 7.3]{eklof}. It is readily verified that this 
construction is absolute between $V$ and outer models with the 
same $\aleph_1$; in particular, in any ccc forcing extension, it 
remains true that $A$ satisfies Chase's condition and is not free. 
Let $\B$ be the Boolean completion of the standard ccc forcing 
iteration to force $\MA_{\omega_1}$, and let $0 \ra K \ra F 
\ra A \ra 0$ be a free resolution of $A$. Since 
$V = L$, $A$ is not Whitehead in $V$, so there is a map
$\tau \in \Hom(K,\Z)$ that does not lift to a map 
$\sigma \in \Hom(F,\Z)$. Then the constant map $\varphi : S(\B) 
\rightarrow \Hom(K,\Z)$ taking value $\tau$ witnesses that 
$\iExt^1_{\CondAb}(\ul{A}, \ul{\Z})(S(\B)) \neq 0$. However, 
since $A$ still satisfies Chase's condition in $V^{\B}$ and 
$\MA_{\omega_1}$ holds there, it follows that $\B$ forces that 
$A$ is Whitehead.

We can, though, recover an informative partial converse to 
Theorem \ref{forcing_thm}, in the following sense. Note that, 
in the situation described in the paragraph above, forcing with 
$\B$ necessarily adds a new element $\sigma \in \Hom(F,\Z)$ 
that extends $\tau$. However, this new homomorphism $\sigma$ 
must be quite far from any ground model homomorphism; in particular, 
it cannot be bounded by any ground model function when restricted 
to any basis for $F$ that lies in $V$. More generally, we have the 
following result.

\begin{theorem}
    Suppose that $A$ is a non-Whitehead abelian group, 
    $0 \ra K \ra F \ra A \ra 0$ is a free resolution of $A$, 
    and $\tau \in \Hom(K,\Z)$ is a homomorphism that does not 
    extend to a homomorphism $\sigma \in \Hom(F,\Z)$. Suppose 
    moreover that $B_F$ is a~basis for $F$, $e:B_F \ra \omega$ 
    is a function, and $\B$ is a complete Boolean algebra. Then, 
    in $V^{\B}$, there is no homomorphism $\sigma \in 
    \Hom(F,\Z)$ such that
    \begin{itemize}
        \item $\sigma \restriction K = \tau$; and
        \item for all $y \in B_F$, $|\sigma(y)| \leq e(y)$.
    \end{itemize}
\end{theorem}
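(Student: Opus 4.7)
The plan is to argue by contradiction: assuming a name $\sigma$ for such a bounded homomorphism exists in $V^{\B}$, I will extract from $\sigma$ a ground-model homomorphism $\sigma_0 \in \Hom(F,\Z)^V$ with $\sigma_0 \restriction K = \tau$, contradicting the hypothesis that $\tau$ admits no extension. The $e$-boundedness is exactly what enables this extraction, by placing $\sigma$ inside the \emph{compact} space $K^* := \prod_{y \in B_F} [-e(y), e(y)] \subseteq \Z^{B_F}$ (under the identification $\Hom(F,\Z) \cong \Z^{B_F}$ given by the basis).

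First I would convert the name $\sigma$ into a continuous function $g : S(\B) \to K^*$ in $V$. By the remarks at the end of Section~\ref{top_spaces_sec}, the compactness of $K^*$ ensures that such a $g$ corresponds to $\sigma$ in the sense of Theorem~\ref{homeo_thm}, i.e., $1_\B \Vdash \sigma = \dot{k}([g])$. Concretely, for each $y \in B_F$ and each $n \in [-e(y), e(y)]$, set $b_{y,n} := \llbracket \sigma(y) = n \rrbracket_{\B}$; since $\sigma(y)$ is forced to take a value in the finite set $[-e(y), e(y)]$, the family $\{b_{y,n} : n \in [-e(y), e(y)]\}$ is a finite partition of unity in $\B$. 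Define $g(s)(y)$ to be the unique $n$ with $b_{y,n} \in s$; continuity is immediate, since the preimage of each subbasic clopen $\{x \in K^* : x(y) = n\}$ is the clopen $N_{b_{y,n}}$ of $S(\B)$.

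Next I would fix any ultrafilter $s \in S(\B)$ (which exists in $V$ by the ultrafilter lemma) and define $\sigma_0 : F \to \Z$ by setting $\sigma_0(y) := g(s)(y)$ for $y \in B_F$ and extending linearly. To verify $\sigma_0 \restriction K = \tau$, fix $t \in B_K$ and write $t = \sum_{y \in Y_t} a_{t,y} y$ with $Y_t \subseteq B_F$ finite. Each $b_{y, g(s)(y)}$ lies in $s$, so their finite meet $c := \bigwedge_{y \in Y_t} b_{y, g(s)(y)}$ also lies in $s$, and is in particular nonzero. But $c$ forces $\sigma(t) = \sum_y a_{t,y} g(s)(y)$, while $1_\B$ forces $\sigma(t) = \tau(t)$; since $c \neq 0_\B$, these two integers must coincide, yielding $\sigma_0(t) = \tau(t)$. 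Hence $\sigma_0 \in \Hom(F,\Z)^V$ extends $\tau$, the desired contradiction.

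The argument involves no real obstacle beyond the bookkeeping described above; the essential input — and the reason the $e$-boundedness hypothesis cannot be dropped — is the compactness of $K^*$, which is exactly what allows a name in $V^{\B}$ to be traded for an ordinary ground-model continuous function via the correspondence of Theorem~\ref{homeo_thm}. Without compactness, the analogous construction of $g$ would fail, in parallel with the asymmetries noted at the end of Section~\ref{top_spaces_sec}.
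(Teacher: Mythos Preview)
Your argument is correct and is genuinely more elementary than the paper's. The paper applies the general transfer principle of Theorem~\ref{equivalence_thm}(1) in the direction $C(S(\B),X) \Rightarrow V^{\B}$: the constant function $f$ taking value $\tau \restriction B_K$ witnesses $C(S(\B),X_F \times X_K) \models \forall y\, (y,x) \in U$ (simply because no $\sigma$ extends $\tau$ in $V$), and the theorem transfers this universal statement to $V^{\B}$, where it unwinds to the desired conclusion. You instead bypass the general machinery entirely and extract a ground-model extension directly from the name by evaluating along a fixed ultrafilter --- in effect, you are reproving the special case of Theorem~\ref{equivalence_thm} that is actually needed here. What your route buys is self-containment and transparency; what the paper's route buys is an illustration of the framework of Section~\ref{compact_sec} at work. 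Both hinge on the same point: the $e$-bound confines $\sigma$ to a compact subspace of $\Z^{B_F}$, which is what makes the finite partitions $\{b_{y,n}\}$ available.

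One small presentational gap: you write ``assuming a name $\sigma$ \ldots\ exists in $V^{\B}$'' and later ``$1_\B$ forces $\sigma(t) = \tau(t)$'', but a priori the properties of $\dot\sigma$ are only forced below some nonzero $b \in \B$. You should choose your ultrafilter $s$ to lie in $N_b$; then $c = \bigwedge_{y \in Y_t} b_{y,g(s)(y)}$ still lies in $s$ (hence is nonzero) and lies below $b$, so $c$ forces both $\sigma(t) = \sigma_0(t)$ and $\sigma(t) = \tau(t)$, and the contradiction goes through unchanged. (Incidentally, the continuity of $g$ is never actually used in your argument --- you only need the single value $g(s)$ --- so the detour through $C(S(\B),K^*)$, while illuminating, is optional.)
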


\begin{proof}
    Let $B_K$ be a basis for $K$, let $X_K := {^{B_K}}\Z$ 
    and $X_F := {^{B_F}}\Z$, let $\pi : 
    X_F \ra X_K$ be the continuous map from 
    the proof of Theorem \ref{forcing_thm}, and again let 
    $U := \{(y,x) \in X_F \times X_K \mid 
    \pi(y) \neq x\}$. Let $\varphi(x)$ be the 
    formula $\forall y \in X_F (y,x) \in U$, 
    where $x$ is a~free variable. Let $f:S(\B) \ra X_K$ be the constant function 
    taking value $\tau \restriction B_K$. Then, by the choice of $\tau$, 
    we have $C(S(\B),X_F \times X_K) \models 
    \varphi(f)$. By Theorem \ref{equivalence_thm}, it follows 
    that $V^{\B} \models \varphi(\dot{x}_f)$. Since $f$ is a constant 
    function, it is readily verified that $\dot{x}_f$ is forced to be 
    equal to the value of $f$, i.e., to $\tau \restriction B_K$. 
    Therefore, it follows that, in $V^{\B}$, for all 
    $\sigma \in \Hom(F,\Z)$, if $\sigma \restriction B_F \in 
    (\hat{X}_F)_{\mc K}$, then $\sigma \restriction K \neq \tau$. 
    But $\sigma \restriction B_F \in (\hat{X}_F)_{\mc K}$ is 
    equivalent to the existence of an $e:B_F \ra \omega$ in $V$ 
    such that $|\sigma(y)| \leq e(y)$ for all $y \in B_F$, so 
    the theorem follows.
\end{proof}

\section{Martin's Axiom and $\CondAb_{\omega_1}$} \label{ma_sec}

In this section, we show that the independence of Whitehead's problem 
in the classical setting can persist if we restrict ourselves 
to $\CondAb_\kappa$ for certain cardinals $\kappa$. 
Both for concreteness and because the category 
$\CondAb_{\omega_1}$ of light condensed sets provides the setting for 
\cite{analytic_stacks}, we focus on the case $\kappa = \omega_1$.
The arguments in this section will be almost entirely set theoretic and 
combinatorial. We present a more algebraic path to related results 
in Section \ref{alternate_section}.

For a given abelian group $A$, if $\ul{A}$ is Whitehead in 
$\CondAb_{\omega_1}$, then $A$ is Whitehead in the classical sense, 
since 
\[
  \iExt^1_{\CondAb_{\omega_1}}(\ul{A}, \ul{\Z})(\ast) = 
  \Ext^1_{\Ab}(A, \Z).
\]
Therefore, one side of the independence of Whitehead's problem 
in $\CondAb_{\omega_1}$ is clear: if it is the case that, classically, 
every Whitehead abelian group is free, then it follows that, for 
every abelian group $A$, if $\ul{A}$ is Whitehead in 
$\CondAb_{\omega_1}$, then $A$ is free. In this section, we establish 
the nontrivial side of this independence. Namely, we prove the 
consistency of the existence of a nonfree abelian group $A$ such 
that $\ul{A}$ is Whitehead in $\CondAb_{\omega_1}$. In fact, 
we shall prove that this follows from $\MA_{\omega_1}$, the same 
hypothesis used by Shelah in his proof of Theorem \ref{shelah_thm}.

In what follows, when we write, e.g., that $S = \varprojlim_{i \in \Lambda} S_i \in \mathsf{Prof}$, 
it should be implicitly understood that $\Lambda$ is a directed partial order and 
each $S_i$ is a finite (discrete) space. For each $i \in \Lambda$, we get a projection 
map $\pi_i:S \ra S_i$. The basic open subsets of $S$ are of the form 
$\pi_i^{-1}\{\bar{s}\}$, where $i \in \Lambda$ and $\bar{s} \in S_i$.
If we write $S = \varprojlim_{i \in \Lambda} S_i \in \mathsf{Prof}_{\omega_1}$, then 
this means, moreover, that $\Lambda$ is countable.

\begin{proposition} \label{basic_clopen_prop}
    Suppose that $S = \varprojlim_{i \in \Lambda} S_i \in \mathsf{Prof}$ and 
    $\{O_k \mid k < n\}$ is a finite partition of $S$ into nonempty clopen 
    sets. Then there is an $i \in \Lambda$ and a function $f:S_i \ra n$ 
    such that, for all $k < n$, we have $O_k = \{s \in S \mid f(\pi_i(s)) = k\}$.
\end{proposition}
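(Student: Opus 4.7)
The plan is to exploit the standard fact that clopen subsets of a profinite limit are determined by their behavior on some finite stage, and then merge the finitely many relevant stages into one via the directedness of $\Lambda$.

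First I would observe that each $O_k$ is clopen, hence open, so it is a union of basic open sets of the form $\pi_j^{-1}\{\bar{s}\}$ for various $j \in \Lambda$ and $\bar{s} \in S_j$. Since $O_k$ is also closed in the compact space $S$, it is compact, so this union can be taken to be finite: we may write
\[
  O_k \;=\; \bigcup_{\ell < m_k} \pi_{i_{k,\ell}}^{-1}\{\bar{s}_{k,\ell}\}
\]
for some finite $m_k < \omega$ and choices of $i_{k,\ell} \in \Lambda$ and $\bar{s}_{k,\ell} \in S_{i_{k,\ell}}$. Since $\Lambda$ is directed and there are only finitely many indices $i_{k,\ell}$ across all $k < n$ and $\ell < m_k$, I can choose a single $i \in \Lambda$ with $i \geq i_{k,\ell}$ for all such pairs. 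Writing $\pi_{i, i_{k,\ell}} : S_i \to S_{i_{k,\ell}}$ for the bonding map, we then have $\pi_{i_{k,\ell}}^{-1}\{\bar{s}_{k,\ell}\} = \pi_i^{-1}[A_{k,\ell}]$ for the finite set $A_{k,\ell} := \pi_{i,i_{k,\ell}}^{-1}\{\bar{s}_{k,\ell}\} \subseteq S_i$. Setting $A_k := \bigcup_{\ell < m_k} A_{k,\ell}$ gives $O_k = \pi_i^{-1}[A_k]$.

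Next I would verify that the sets $A_k \cap \pi_i[S]$, for $k < n$, partition $\pi_i[S]$. Disjointness is automatic: if $\bar{s} \in (A_k \cap A_{k'}) \cap \pi_i[S]$ with $k \neq k'$, choosing $s \in S$ with $\pi_i(s) = \bar{s}$ places $s$ in $O_k \cap O_{k'}$, contradicting that $\{O_k\}$ is a partition. Covering follows because $S = \bigsqcup_k O_k = \bigsqcup_k \pi_i^{-1}[A_k]$, so each $\bar{s} \in \pi_i[S]$ lies in some $A_k$. Now define $f : S_i \to n$ by declaring $f(\bar{s}) = k$ whenever $\bar{s} \in A_k \cap \pi_i[S]$ (this is well-defined by the disjointness just established) and extending $f$ arbitrarily on $S_i \setminus \pi_i[S]$. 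For any $s \in S$, $f(\pi_i(s)) = k$ if and only if $\pi_i(s) \in A_k$ if and only if $s \in \pi_i^{-1}[A_k] = O_k$, as required.

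There is no substantive obstacle here: the only step requiring a moment of care is the compactness reduction to finitely many basic clopen sets, and the handling of points of $S_i$ outside the image $\pi_i[S]$, which are irrelevant to the conclusion and can be assigned values freely.
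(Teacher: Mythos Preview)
Your proof is correct and follows essentially the same approach as the paper's: the paper's proof simply states that the result follows from each $O_k$ being a finite union of basic open sets (by compactness) together with the directedness of $\Lambda$, and your argument spells out precisely these details, including the explicit construction of $f$ and the harmless handling of points outside $\pi_i[S]$.
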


\begin{proof}
    This follows immediately from the fact that each $O_k$ is compact and open and can 
    thus be written as a finite union of basic open subsets of $S$, together with 
    the directedness of $\Lambda$.
\end{proof}

\begin{proposition} \label{local_prop}
    Suppose that $S = \varprojlim_{i \in \Lambda} S_i \in \mathsf{Prof}$, $A$ and $B$ are 
    (discrete) abelian groups, and $\varphi:S \ra \Hom(A,B)$ is a continuous map. 
    For every finitely generated subgroup $A_0$ of $A$, there is an $i \in \Lambda$ and a 
    map $\varphi_0:S_i \ra \Hom(A_0,B)$ such that, for all $s \in S$, we have
    \[
      \varphi(s) \restriction A_0 = \varphi_0(\pi_i(s)).
    \]
\end{proposition}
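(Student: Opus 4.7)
The plan is to reduce to the fact that the map $\varphi$, being valued in $\Hom(A,B)$ with the compact-open topology, is continuous at each generator of $A_0$ in a particularly strong way, since $B$ is discrete. First I would note that, because $A$ and $B$ are both discrete, $\Hom(A,B)$ carries the subspace topology inherited from the product topology on $B^A$; equivalently, a net in $\Hom(A,B)$ converges if and only if it converges pointwise. Hence the continuity of $\varphi$ is equivalent to the assertion that, for every $a \in A$, the evaluation map $\varphi_a : S \to B$, $s \mapsto \varphi(s)(a)$, is continuous.

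Next, I would fix a finite generating set $\{a_1, \ldots, a_n\}$ for $A_0$ and consider the maps $\varphi_{a_1}, \ldots, \varphi_{a_n}$. Each $\varphi_{a_j}$ is a continuous map from the compact space $S$ to the discrete space $B$, so its image is finite and the preimages of points yield a finite partition of $S$ into clopen sets. Taking the common refinement of these $n$ partitions produces a single finite clopen partition $\{O_k \mid k < m\}$ of $S$ such that every $\varphi_{a_j}$ is constant on each $O_k$. By Proposition \ref{basic_clopen_prop}, there exist $i \in \Lambda$ and a function $f : S_i \to m$ such that $O_k = \{s \in S \mid f(\pi_i(s)) = k\}$ for every $k < m$; in particular, each $\varphi_{a_j}$ factors through $\pi_i$.

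Since $A_0$ is generated by $\{a_1, \ldots, a_n\}$, the entire restriction $\varphi(s) \restriction A_0$ is determined by the tuple $(\varphi_{a_1}(s), \ldots, \varphi_{a_n}(s))$, and so also factors through $\pi_i$. I would therefore define $\varphi_0 : S_i \to \Hom(A_0, B)$ by setting $\varphi_0(\bar{s}) := \varphi(s) \restriction A_0$ for any $s \in \pi_i^{-1}\{\bar{s}\}$ when $\bar{s} \in \pi_i[S]$ (well-defined by the previous paragraph), and taking $\varphi_0(\bar{s})$ to be, say, the zero homomorphism otherwise. The required identity $\varphi(s) \restriction A_0 = \varphi_0(\pi_i(s))$ then holds by construction.

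There is no real obstacle here: the only point requiring care is the passage from finitely many indices $i_1, \ldots, i_n \in \Lambda$ (one per generator $a_j$) to a single $i$, but this is immediate from the directedness of $\Lambda$ and is already packaged into Proposition \ref{basic_clopen_prop} via the common refinement.
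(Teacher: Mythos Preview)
Your proof is correct and follows essentially the same approach as the paper's: both fix a finite generating set for $A_0$, use continuity and compactness of $S$ to obtain a finite clopen partition on which the restriction to the generators is constant, invoke Proposition~\ref{basic_clopen_prop} to factor through some $\pi_i$, and then use that a homomorphism on $A_0$ is determined by its values on the generators. The only cosmetic difference is that the paper partitions $S$ directly by the full restriction $\varphi(s)\restriction X_0$, whereas you partition by each generator separately and take the common refinement; these yield the same partition.
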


\begin{proof}
    Let $X_0$ be a finite subset of $A$, and let $A_0$ be the subgroup of $A$ 
    generated by $X_0$. For each $\psi:X_0 \ra B$, the set $S_\psi := 
    \{s \in S \mid \varphi(s) \restriction X_0 = \psi\}$ is a clopen subset of 
    $S$, by the continuity of $\varphi$. By the compactness of $S$, there are 
    only finitely many $\psi$ for which $S_\psi$ is nonempty. Therefore, by 
    Proposition \ref{basic_clopen_prop}, there is $i \in \Lambda$ and 
    $f:S_i \ra {^{X_0}}B$ such that, for all $s \in S$, we have 
    $\varphi(s) \restriction X_0 = f(\pi_i(s))$.

    Since $A_0$ is generated by $X_0$, every element of $\Hom(A_0,B)$ 
    is determined by its restriction to $X_0$. Therefore, for each $\bar{s} 
    \in S_i$, there is a unique extension $\varphi_0(\bar{s})$ of 
    $f(\bar{s})$ to an element of $\Hom(A_0,B)$.\footnote{Such an extension 
    must exist because, if $s \in S$ is such that $\pi_i(s) = \bar{s}$, 
    then $\varphi(s) \restriction A_0$ is such an extension; the uniqueness 
    then follows from the previous sentence.} It then follows that, for all 
    $s \in S$, we have $\varphi(s) \restriction A_0 = \varphi_0(\pi_i(s))$, 
    as desired.
\end{proof}

Recall the definition of Chase's condition from the end of 
Section \ref{compact_sec}. We now show that Theorem 
\ref{shelah_ma_thm} transfers to the richer setting 
of $\CondAb_{\omega_1}$.

\begin{theorem}
    Suppose that $\MA_{\omega_1}$ holds and $A$ is an abelian group of cardinality 
    $\aleph_1$ satisfying Chase's condition. Then 
    \[
      \iExt^1_{\CondAb_{\omega_1}}(\ul{A}, \ul{\Z}) = 0.
    \]
\end{theorem}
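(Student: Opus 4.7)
The plan is to adapt Shelah's classical $\MA_{\omega_1}$ argument from Theorem \ref{shelah_ma_thm} so as to produce \emph{continuous} lifts. By the analysis ending Section \ref{prelim_sec}, vanishing of $\iExt^1_{\CondAb_{\omega_1}}(\ul{A},\ul{\Z})(S)$ for a given $S \in \mathsf{Prof}_{\omega_1}$ is equivalent to every continuous $\varphi : S \to \Hom(K,\Z)$ admitting a continuous pointwise lift $\psi : S \to \Hom(F,\Z)$, where $0 \to K \hookrightarrow F \to A \to 0$ is a free resolution of $A$. Since $S$ is metrizable, I would write $S = \varprojlim_{n<\omega} S_n$ with each $S_n$ finite, and use Proposition \ref{local_prop} to associate to each finitely generated $X \le K$ a least level $n(X) < \omega$ and a unique map $\varphi_X : S_{n(X)} \to \Hom(X,\Z)$ through which $\varphi \restriction X$ factors.

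Fixing such $\varphi$, I would define a forcing notion $\P = \P_\varphi$ whose conditions are triples $p = (n_p, B_p, \bar f^p)$ with $n_p < \omega$, $B_p \le F$ finitely generated, $n_p \ge n(B_p \cap K)$, and $\bar f^p = (f^p_{\bar s})_{\bar s \in S_{n_p}}$ a family of homomorphisms $B_p \to \Z$ satisfying $f^p_{\bar s} \restriction (B_p \cap K) = \varphi_{B_p \cap K}(\bar s)$ for each $\bar s \in S_{n_p}$. The ordering is the natural one: $q \le p$ iff $n_q \ge n_p$, $B_q \supseteq B_p$, and $f^q_{\bar t} \restriction B_p = f^p_{\pi_{n_q,n_p}(\bar t)}$ for every $\bar t \in S_{n_q}$, where $\pi_{n_q,n_p} : S_{n_q} \to S_{n_p}$ is the transition map. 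For each $a \in F$ and $m < \omega$, the set $D_{a,m} := \{ p \in \P \mid a \in B_p \text{ and } n_p \ge m\}$ will be dense, and a filter meeting all such sets assembles, via the compatibility condition, into a continuous $\psi : S \to \Hom(F,\Z)$ lifting $\varphi$ pointwise.

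What remains is verifying that $\P$ is ccc and that each $D_{a,m}$ is dense. Density is the more straightforward of the two: to enlarge $B_p$ by a given $a \in F$, one invokes Chase's condition to find a countable $\aleph_1$-pure subgroup $C$ of $A$ containing the image of $B_p + \Z a$, whose freeness permits each $f^p_{\bar s}$ to be extended consistently to a homomorphism on (a pullback of a basis within) $B_p + \Z a$; increasing $n_p$ to $m$ is then routine by pulling $\bar f^p$ back along the transition map $S_m \to S_{n_p}$.

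The main obstacle is verifying the ccc. Given a putative antichain $\langle p_\xi \mid \xi < \omega_1 \rangle$, one first pigeonholes on the countably many possible values of $n_{p_\xi}$ to isolate an uncountable subfamily on which this level equals a single $n^*$, then applies the $\Delta$-system lemma to $\langle B_{p_\xi} \rangle$ to reduce to the case of a fixed root $R$, and finally thins further so that the restrictions $f^{p_\xi}_{\bar s} \restriction R$ agree for each $\bar s \in S_{n^*}$. At this point the amalgamation of two remaining conditions reduces, fiber by fiber over the finite set $S_{n^*}$, to Shelah's original combinatorial argument (as in \cite[\S XII]{Eklof_Mekler}) invoking Chase's condition to produce a common extension; the finiteness of $S_{n^*}$ ensures that the additional bookkeeping across fibers does not disrupt the combinatorial core. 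Once $\P$ is ccc, $\MA_{\omega_1}$ applied to the $\aleph_1$-many dense sets $D_{a,m}$ yields the desired continuous lift $\psi$.
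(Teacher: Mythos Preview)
Your overall strategy is sound and close to the paper's, but there is a genuine gap in the ccc argument. In the paper (and in Shelah's original proof), the crucial application of Chase's condition comes \emph{first}: one invokes \cite[Lemma 7.5]{eklof} to thin the putative antichain so that all the associated subgroups of $A$ lie inside a single free pure subgroup $A'$ with a fixed basis $B'$; only \emph{after} this does one arrange that each $A_{p_\eta}$ is generated by a finite $B'_\eta \subseteq B'$ and apply the $\Delta$-system lemma to the $B'_\eta$. This order is essential: because the $B'_\eta$ are subsets of a common basis of a free group, the intersection $A_{p_\eta} \cap A_{p_\xi}$ is exactly the subgroup generated by the root, so agreement on $R$ genuinely suffices for amalgamation. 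Your proposal instead applies the $\Delta$-system lemma to arbitrary finite generating sets of the $B_{p_\xi} \le F$ \emph{before} any such embedding; this controls neither $B_{p_\eta} \cap B_{p_\xi}$ nor $(B_{p_\eta} + B_{p_\xi}) \cap K$, so the ``fiber by fiber'' amalgamation need not produce a well-defined common extension compatible with $\varphi$. Invoking ``Shelah's original combinatorial argument'' at that late stage does not repair this, since that argument is precisely the preliminary embedding step you have skipped.

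Two secondary remarks. First, the paper's conditions carry finitely generated \emph{pure} subgroups $A_p \le A$ (via the canonical free resolution), not arbitrary $B_p \le F$; purity is what makes density straightforward, using only $\aleph_1$-freeness (enlarging a pure $A_p$ to a pure $A'$ gives $A'/A_p$ free, so each partial homomorphism extends freely), whereas your density sketch unnecessarily invokes Chase's condition and does not explain how to handle the possible growth of $B_p \cap K$. Second, your encoding of continuity via a level $n_p < \omega$, rather than carrying a continuous $\psi_p : S \to \Hom(F_{A_p},\Z)$ as the paper does, is a harmless and reasonable variant; the paper achieves the equivalent finite reduction only inside the ccc proof, via Proposition~\ref{local_prop}.
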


\begin{proof}
    Let $0 \ra K \ra F \ra A \ra 0$ be the canonical free resolution of $A$,
    let $S = \varprojlim_{i \in \Lambda} S_i \in \mathsf{Prof}_{\omega_1}$, and let 
    $\varphi : S \ra \Hom(K,\Z)$ be continuous. We will produce a continuous 
    map $\psi : S \ra \Hom(F,\Z)$ such that $\psi(s) \restriction K = 
    \varphi(s)$ for all $s \in S$. This will show that the map 
    $\ul{\mathrm{Hom}}_{\CondAb_{\omega_1}}(\ul{F},\ul{\bb{Z}}) 
    \ra \ul{\mathrm{Hom}}_{\CondAb_{\omega_1}}(\ul{K}, 
    \ul{\bb{Z}})$ is surjective, and hence that 
    $\iExt^1_{\CondAb_{\omega_1}}(\ul{A}, \ul{\Z}) = 0$.
    
    Fix a basis 
    $\{\bar{a} \mid a \in A\}$ for $F$; the surjection from $F$ to $A$ 
    in the free resolution above is then generated by the map that sends 
    $\bar{a}$ to $a$ for all $a \in A$. For each subgroup $A' \subseteq A$, 
    let $F_{A'}$ be the free group generated by $\{\bar{a} \mid a \in A'\}$, and 
    let $K_{A'}$ be the kernel of the canonical surjection from $F_{A'}$ to 
    $A'$.

    \begin{claim} \label{extension_claim}
        Suppose that $A'$ is a free subgroup of $A$, with a basis $B'$, and 
        let $f: S \ra {^{B'}}\Z$ be a continuous function (where 
        $\Z$ is discrete and ${^{B'}}\Z$ is given the product topology). 
        Then there is a unique continuous map $\psi':S \ra \Hom(F_{A'}, 
        \Z)$ such that
        \begin{enumerate}
            \item for all $s \in S$ and $b \in B'$, $\psi'(s)(\bar{b}) = 
            f(s)(b)$;
            \item for all $s \in S$, $\psi'(s) \restriction K_{A'} = 
            \varphi(s) \restriction K_{A'}$.
        \end{enumerate}
    \end{claim}

    \begin{proof}
        Fix $s \in S$. To fulfill requirement (1), we are obliged to 
        let $\psi'(s)(\bar{b}) := f(s)(b)$ for all $b \in B'$.
        Now fix $a \in A' \setminus B'$. There is a unique way 
        to express $a$ in the form $\sum_{b \in B^*} c_b b$, where 
        $B^* \subseteq B'$ is finite and, for all $b \in B^*$, 
        $c_b$ is a nonzero integer. To satisfy requirement (2), we are 
        obliged to let
        \[
          \psi'(s)(\bar{a}) := \varphi(s)\left( \bar{a} - 
          \sum_{b \in B^*} c_b \bar{b} \right) + 
          \sum_{b \in B^*} c_b f(s)(b).
        \]
        We must then extend $\psi'(s)$ linearly to the rest of $F_{A'}$. It 
        is routine to verify that $\psi'(s)$ thusly defined is in 
        $\Hom(F_{A'},\Z)$ satisfying (1) and (2), and the continuity of 
        $\psi$ follows immediately from the continuity of $\varphi$ and 
        $f$. The uniqueness of $\psi'$ is evident from the fact that, 
        as the above construction makes clear, all values of $\psi'(s)$ 
        were entirely determined by requirements (1) and (2) and the 
        necessity for $\psi'(s)$ to be a homomorphism.
    \end{proof}

    We define a forcing notion $\P$ to which we will apply $\MA_{\omega_1}$. 
    Conditions in $\P$ are all pairs of the form $p = (A_p, \psi_p)$ such 
    that 
    \begin{itemize}
        \item $A_p$ is a finitely-generated pure subgroup of $A$;
        \item $\psi_p : S \rightarrow \Hom(F_{A_p},\Z)$ is continuous;
        \item for all $s \in S$, $\psi_p(s) \restriction K_{A_p} = 
        \varphi(s) \restriction K_{A_p}$.
    \end{itemize}
    If $p,q \in \P$, then $q \leq p$ if and only if $A_q \supseteq A_p$ 
    and, for all $s \in S$, $\psi_q(s) \restriction F_{A_p} = 
    \psi_p(s)$.

    \begin{claim} \label{dense_claim}
        For all $a \in A$, the set $D_a := \{p \in \P \mid a \in A_p\}$ 
        is dense in $\P$.
    \end{claim}

    \begin{proof}
        Fix $a \in A$ and $p \in \P$. We will find $q \leq p$ with 
        $q \in D_a$. Since $A$ is $\aleph_1$-free, we 
        can find a finitely generated pure subgroup $A'$ of $A$ such 
        that $A_p \cup \{a\} \subseteq A'$. Then $A'/A_p$ is free, so 
        we can fix a basis for $A'$ of the form $B_0 \cup B_1$, where 
        $B_0$ is a basis for $A_p$. Define a function $f:S \ra 
        {^{B_0 \cup B_1}}\Z$ as follows: for all $s \in S$ and 
        $b \in b_0 \cup B_1$, let
        \begin{align*}
            f(s)(b) := \begin{cases}
                \psi_p(s)(\bar{b}) & \text{if } b \in B_0 \\ 
                0 & \text{if } b \in B_1.
            \end{cases}
        \end{align*}
        Since $\psi_p$ is continuous, it follows that $f$ is continuous.
        Let $\psi':S \ra \Hom(F_{A'},\Z)$ be the unique continuous map 
        given by Claim \ref{extension_claim} applied to $A'$, 
        $B_0 \cup B_1$, and $f$. Then $q := (A', \psi') \in D_a$. 
        Moreover, the uniqueness clause of Claim \ref{extension_claim} 
        together with the definition of $f$ implies that 
        $\psi'(s) \restriction F_{A_p} = \psi_p(s)$ for all $s \in S$, 
        so we have $q \leq p$.
    \end{proof}

    \begin{claim}
        $\P$ has the ccc.
    \end{claim}

    \begin{proof}
        Let $\vec{p} = \langle p_\eta \mid \eta < \omega_1 \rangle$ 
        be a sequence of conditions in $\P$. By (the proof of) 
        \cite[Lemma 7.5]{eklof}, by thinning out $\vec{p}$ to some 
        cofinal subsequence if necessary, 
        we can assume that there is a free subgroup $A'$ of $A$ that is 
        pure in $A$ and such that $A_{p_\eta} \subseteq A'$ for all 
        $\eta < \omega_1$ (this is the only use of the full power of 
        Chase's condition in the proof). Let $B'$ be a basis for $A'$. 
        Using Claim \ref{dense_claim} and extending each $p_\eta$ if 
        necessary, we can assume that, for all $\eta < \omega_1$, 
        $A_{p_\eta}$ is freely generated by some finite subset 
        $B'_\eta$ of $B'$. By thinning out $\vec{p}$ again if necessary, 
        we can assume that the sets $\{B'_\eta \mid \eta < \omega_1\}$ 
        form a $\Delta$-system, with root $R$.

        For all $\eta < \omega_1$, define a map $f_\eta:S \ra 
        {^{B'_\eta}}\Z$ by letting $f_\eta(s)(b) := \psi_{p_\eta}(s)(\bar{b})$ 
        for all $s \in S$ and $b \in B'_\eta$. By Proposition 
        \ref{local_prop}, we can fix $i_\eta \in \Lambda$ and a map 
        $g_\eta:S_{i_\eta} \ra {^R}\Z$ such that, for all $s \in S$ and $b \in R$, 
        we have $f_\eta(s)(b) = g_\eta(\pi_i(s))(b)$. Since there are only 
        countably many choices for $i_\eta$ and $g_\eta$, we can find a 
        fixed $i$ and $g$ and ordinals $\eta < \xi < \omega_1$ such that 
        $i_\eta = i_\xi = i$ and $g_\eta = g_\xi = g$. Note that, for all 
        $s \in S$ and $b \in R$, this implies that $f_\eta(s)(b) = 
        f_\xi(s)(b)$.

        Let $B^* := B_\eta \cup B_\xi$, and let $A^*$ be the free group 
        generated by $B^*$. Then $A^*$ is a pure subgroup of $A'$; since 
        $A'$ is pure in $A$, this implies that $A^*$ is also pure in $A$.
        Define a function $f:S \ra {^{B^*}}\Z$ by letting $f(s) = 
        f_\eta(s) \cup f_\xi(s)$ for all $s \in S$. Since $\psi_{p_\eta}$ 
        and $\psi_{p_\xi}$ are continuous (and hence $f_\eta$ and $f_\xi$ 
        are continuous), it follows that $f$ is continuous. Let $\psi^* : 
        S \ra \Hom(F_{A^*},\Z)$ be the unique continuous map obtained by 
        applying Claim \ref{extension_claim} to $A^*$, $B^*$, 
        and $f$. Then $q = (A^*, \psi^*) \in \P$ and, by the uniqueness 
        clause of Claim \ref{extension_claim}, it extends both $p_\eta$ 
        and $p_\xi$. Therefore, $\vec{p}$ does not enumerate an antichain, 
        and hence $\P$ has the ccc.
    \end{proof}
    Now apply $\MA_{\omega_1}$ to obtain a filter $G \subseteq \P$ such that, 
    for all $a \in A$, we have $G \cap D_a \neq \emptyset$. Define 
    $\psi:S \ra \Hom(F,\Z)$ by letting $\psi(s) = \bigcup \{\psi_p(s) \mid 
    p \in G\}$. The fact that each $\psi_p$ is a continuous map such 
    that, for all $s \in S$, we have $\psi_p(s) \restriction K_{A_p} 
    = \varphi(s) \restriction K_{A_p}$ implies that $\psi$ is a continuous 
    map such that, for all $s \in S$, we have $\psi(s) \restriction K 
    = \varphi(s)$, as desired.
\end{proof}

Recalling Griffith's result \cite{griffith} stating that there is provably in $\ZFC$ a 
nonfree abelian group of size $\aleph_1$ satisfying Chase's condition, 
we thus obtain the following corollary, establishing Theorem 
\ref{local_theorem}(1).

\begin{corollary} \label{ma_cor}
    If $\MA_{\omega_1}$ holds, then there is a nonfree abelian 
    group $A$ of size $\aleph_1$ such that $\ul{A}$ is Whitehead in 
    $\CondAb_{\omega_1}$. 
\end{corollary}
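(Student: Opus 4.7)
The plan is to combine the preceding theorem with a known $\ZFC$ construction of a suitable witness group. Recall that Griffith \cite{griffith}, as cited earlier in our discussion of Theorem \ref{shelah_ma_thm}, constructed in $\ZFC$ a nonfree abelian group $A$ of cardinality $\aleph_1$ satisfying Chase's condition. So my first step is simply to invoke Griffith's construction to produce such an $A$.

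Having fixed this $A$, the second and only real step is to apply the preceding theorem directly: since $\MA_{\omega_1}$ holds and $A$ is an abelian group of cardinality $\omega_1$ satisfying Chase's condition, we conclude that $\iExt^1_{\CondAb_{\omega_1}}(\ul{A}, \ul{\Z}) = 0$, i.e., $\ul{A}$ is Whitehead in $\CondAb_{\omega_1}$. Since $A$ is nonfree by choice, this is exactly the conclusion of the corollary.

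There is no real obstacle here, as the corollary is a straightforward instantiation of the preceding theorem against a known existence result. The only thing worth noting for the reader is why such an $A$ exists at all without any additional set-theoretic hypothesis: this is the content of Griffith's theorem, and the explicit construction in \cite[Theorem 7.3]{eklof} (used already in the paragraph following Theorem \ref{shelah_ma_thm}) serves as a concrete reference. Thus the proof reduces to one sentence: let $A$ be any such Griffith group and apply the theorem.
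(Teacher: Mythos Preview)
Your proposal is correct and matches the paper's approach exactly: the corollary is stated without proof in the paper, as it follows immediately from the preceding theorem together with Griffith's $\ZFC$ construction (cited earlier in connection with Theorem~\ref{shelah_ma_thm}) of a nonfree abelian group of size $\aleph_1$ satisfying Chase's condition.
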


\section{A different vantage point} \label{alternate_section}

In this section, we describe and apply a lemma providing an alternative, 
more purely algebraic path to several of our results, as well as to a few new ones.
Recall from Section \ref{top_spaces_sec} that if $\B$ is the complete Boolean algebra of regular open 
subsets of an extremally disconnected compact Hausdorff space $S$, then $C(S,\Z)$ is precisely the group of $\B$-names 
for integers. 

\begin{lemma} \label{ext_calc_lemma}
For all abelian groups $A$ and extremally disconnected profinite sets~$S$,
$$\underline{\mathrm{Ext}}^1_{\mathsf{CondAb}}(\underline{A},\underline{\mathbb{Z}})(S) = \mathrm{Ext}^1_{\mathsf{Ab}}(A,C(S,\mathbb{Z})).$$
\end{lemma}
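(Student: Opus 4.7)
The plan is to argue by comparing two four-term exact sequences, one in $\CondAb$ evaluated at $S$ and one in $\Ab$, obtained from a common free resolution of $A$. Fix a free resolution $0 \to K \to F \to A \to 0$; by \cite[Props. 2.18 and 2.19]{Aparicio_condensed_21} this induces a short exact sequence $0 \to \ul{K} \to \ul{F} \to \ul{A} \to 0$ in $\CondAb$. Applying $\iHom(-, \ul{\Z})$ and recalling (as in Section \ref{prelim_sec}) that $\ul{F}$ and $\ul{K}$ are Whitehead in $\CondAb$, we get the four-term exact sequence of condensed abelian groups
\[
0 \to \iHom(\ul{A}, \ul{\Z}) \to \iHom(\ul{F}, \ul{\Z}) \to \iHom(\ul{K}, \ul{\Z}) \to \iExt^1(\ul{A}, \ul{\Z}) \to 0.
\]
Since $S$ is extremally disconnected, $\Z[S]$ is a projective generator and hence $T \mapsto T(S) = \Hom_{\CondAb}(\Z[S], T)$ is exact; evaluating the above sequence at $S$ and applying Proposition \ref{ihom_prop} produces
\[
0 \to C(S, \Hom(A,\Z)) \to C(S, \Hom(F,\Z)) \to C(S, \Hom(K,\Z)) \to \iExt^1(\ul{A}, \ul{\Z})(S) \to 0,
\]
where each $\Hom$-group carries the compact-open topology.

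In parallel, applying the classical $\Hom(-, C(S,\Z))$ to the same resolution, and again using that $F$ and $K$ are free, yields
\[
0 \to \Hom(A, C(S,\Z)) \to \Hom(F, C(S,\Z)) \to \Hom(K, C(S,\Z)) \to \Ext^1(A, C(S,\Z)) \to 0.
\]
The central step will be to exhibit a natural isomorphism
\[
C(S, \Hom(G,\Z)) \;\cong\; \Hom(G, C(S,\Z))
\]
for any discrete abelian group $G$ and any compact Hausdorff $S$, given by $\psi \mapsto (g \mapsto (s \mapsto \psi(s)(g)))$. This is essentially a currying argument: since compact subsets of the discrete group $G$ are finite, the compact-open topology on $\Hom(G,\Z)$ coincides with the product topology, so continuity of $\psi : S \to \Hom(G,\Z)$ is equivalent to continuity of $s \mapsto \psi(s)(g)$ for each $g$, which is precisely what is required for the curried map to land in $C(S,\Z)$; the homomorphism property of $\psi(s)$ for each $s$ corresponds to that of the curried map on $G$.

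Applying this natural isomorphism termwise to the two sequences above, the first three terms are isomorphic via naturality in $G \in \{A, F, K\}$, and the restriction maps arising from $K \hookrightarrow F$ on both sides transport to one another (this is a functoriality check in the group variable). By the five lemma, or more simply because both sequences present $\iExt^1(\ul{A},\ul{\Z})(S)$ and $\Ext^1_{\Ab}(A, C(S,\Z))$ as the cokernel of the same map, we obtain the desired identification. The only conceptual subtlety—and thus the main obstacle—is justifying that the evaluation $T \mapsto T(S)$ actually turns the condensed $\iExt^1$ into the cokernel displayed above, which rests on the projectivity of $\Z[S]$ for extremally disconnected $S$ together with the Whitehead property of $\ul{F}$ and $\ul{K}$ already established in Section \ref{prelim_sec}; all remaining verifications are routine bookkeeping.
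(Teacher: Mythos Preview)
Your proof is correct, but it takes a genuinely different route from the paper's. The paper argues via the chain
\[
\iExt^1_{\CondAb}(\ul{A},\ul{\Z})(S) = \Ext^1_{\CondAb}(\ul{A}\otimes\Z[\ul{S}],\ul{\Z}) = \Ext^1_{\CondAb}(\ul{A},\iHom(\Z[\ul{S}],\ul{\Z})) = \Ext^1_{\CondAb}(\ul{A},\ul{C(S,\Z)}) = \Ext^1_{\Ab}(A,C(S,\Z)),
\]
using the identification of $\iExt^1(\cdot,\cdot)(S)$ with an external $\Ext$ against $\Z[\ul{S}]$, the Hom--tensor adjunction (justified at the derived level via flatness of $\Z[\ul{S}]$ and projectivity of the resulting complex), and finally Lemma~\ref{lemma:evaluation} to pass from condensed to classical $\Ext$ for discrete groups. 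Your argument instead evaluates the four-term $\iHom$/$\iExt$ sequence of Section~\ref{prelim_sec} directly at $S$, invoking only the projectivity of $\Z[\ul{S}]$ to guarantee exactness of evaluation, and then matches it to the classical sequence via the elementary currying isomorphism $C(S,\Hom(G,\Z))\cong\Hom(G,C(S,\Z))$. Your route is more self-contained relative to the groundwork already laid in Section~\ref{prelim_sec} and avoids the tensor/adjunction machinery; the paper's route is more structural and makes the role of $C(S,\Z)$ as $\iHom(\Z[\ul{S}],\ul{\Z})$ explicit, which is conceptually suggestive for generalizations.
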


\begin{proof}
The chain of equalities is as follows:
\begin{align*}
\underline{\mathrm{Ext}}^1_{\mathsf{CondAb}}(\underline{A},\underline{\mathbb{Z}})(S) & = \mathrm{Ext}^1_{\mathsf{CondAb}}(\underline{A}\otimes\mathbb{Z}[\underline{S}],\underline{\mathbb{Z}})  \\
& = \mathrm{Ext}^1_{\mathsf{CondAb}}(\underline{A},\underline{\mathrm{Hom}}_{\mathsf{CondAb}}(\mathbb{Z}[\underline{S}],\underline{\mathbb{Z}})) \\
& = \mathrm{Ext}^1_{\mathsf{CondAb}}(\underline{A},\underline{\mathrm{Hom}}_{\mathsf{Cond}}(\underline{S},\underline{\mathbb{Z}})) \\
& = \mathrm{Ext}^1_{\mathsf{CondAb}}(\underline{A},\underline{C(S,\mathbb{Z}})) \\
& = \mathrm{Ext}^1_{\mathsf{Ab}}(A,C(S,\mathbb{Z}))
\end{align*}
To see the fifth equality, apply Lemma \ref{lemma:evaluation}; for the third and fourth, respectively, apply \cite[Prop.\ 3.5 and Prop.\ 3.2]{Aparicio_condensed_21}. The first equality follows from \cite[p.\ 13 (iii) (cf.\ p.\ 25)]{CS1}. This leaves us only with the second. Here the idea, evidently, is to apply the Hom-tensor adjunction at the level of $\mathrm{Ext}^1_{\mathsf{CondAb}}$, and we may do so by the naturality of the adjunction at the $\mathrm{Hom}_{\mathsf{CondAb}}$-level, the flatness of $\mathbb{Z}[\underline{S}]$ \cite[p.\ 13]{CS1}, and the fact, noted already, that $\ul{\bigoplus_I\Z}=\bigoplus_I\ul{\Z}$ for any set $I$.
More particularly, letting $\underline{\mathcal{P}}$, much as in the proof of Lemma \ref{lemma:evaluation}, denote the condensation of a free resolution of $\underline{A}$, we have:
\begin{align*}
\mathrm{Ext}^1_{\mathsf{CondAb}}(\underline{A}\otimes\mathbb{Z}[\underline{S}],\underline{\mathbb{Z}}) & = \mathrm{H}^1(\mathrm{Hom}_{\mathsf{CondAb}}(\underline{\mathcal{P}}\otimes\mathbb{Z}[\underline{S}],\underline{\mathbb{Z}})) \\
& = \mathrm{H}^1(\mathrm{Hom}_{\mathsf{CondAb}}(\underline{\mathcal{P}},\underline{\mathrm{Hom}}_{\mathsf{CondAb}}(\mathbb{Z}[\underline{S}],\underline{\mathbb{Z}})) \\
& = \mathrm{Ext}^1_{\mathsf{CondAb}}(\underline{A},\underline{\mathrm{Hom}}_{\mathsf{CondAb}}(\mathbb{Z}[\underline{S}],\underline{\mathbb{Z}})),
\end{align*}
and this chain of equalities, if sound, will complete the argument.
Let us check the details.
Observe first that $\underline{\mathcal{P}}$ is a projective resolution of $\ul{A}$, by \cite[Prop. 2.18]{Aparicio_condensed_21} and Proposition \ref{prop:proj} below; this shows the third equality.
The second is the aforementioned adjunction.
For the first, we have noted already that $\Z[\ul{S}]$ is flat, so the question reduces to that of whether the terms of $\underline{\mathcal{P}}\otimes\Z[\ul{S}]$ are projective.
For this observe simply that
$\big(\ul{\bigoplus_I\Z}\big)\otimes\Z[\ul{S}]=\big(\bigoplus_I\ul{\Z}\big)\otimes\Z[\ul{S}]=\bigoplus_I\big(\ul{\Z}\otimes\Z[\ul{S}]\big)=\bigoplus_I\Z[\ul{S}]$ for any set $I$; that such sums are projective follows from Proposition \ref{prop:proj} below.
\end{proof}
Note that the lemma allows us to recast this paper's shaping background in uniformly classical terms: Shelah's result was that the implication
$$\big[\,\mathrm{Ext}^1_{\mathsf{Ab}}(A,C(S,\Z))=0\text{ for }S=*\,\big]\;\Rightarrow\;\text{the abelian group }A\text{ is free}$$
is independent of the $\ZFC$ axioms. Clausen and Scholze's result, in contrast, can now be seen to follow from the fact that the implication
$$\big[\,\mathrm{Ext}^1_{\mathsf{Ab}}(A,C(S,\Z))=0\text{ for all }S\in\mathsf{ED}\,\big]\;\Rightarrow\;\text{the abelian group }A\text{ is free}$$
is indeed a $\ZFC$ theorem.
This implication is, moreover, straightforward to prove, since by a result of N\"{o}beling (\cite{nobeling}, see also \cite[Proposition~XI.4.4]{Eklof_Mekler}), 
generalizing work of Specker \cite{specker}, the group 
$C(S,\mathbb{Z})$ is itself a free abelian group, one isomorphic to the free abelian group on the topological weight of $S$.

In fact, we may be more precise. Observe that, if $A$ is 
a nonfree abelian group of cardinality $\kappa$, then the canonical 
free resolution of $A$ yields $\Ext^1_{\Ab}(A, \Z^{(\kappa)}) \neq 
0$, where $\Z^{(\kappa)}$ denotes the free abelian group on 
$\kappa$-many generators. Moreover, if $A$ is an abelian group and 
$A_0$ is a nonfree \emph{subgroup} of $A$ of cardinality $\kappa$, 
then an application of $\Hom(\cdot, \Z^{(\kappa)})$ to the short exact sequence $0 \ra A_0 \ra A \ra A/A_0 \ra 0$ 
yields a long exact sequence, a portion of which is 
\[
\ldots \ra \Ext^1_{\Ab}(A, \Z^{(\kappa)}) \ra 
\Ext^1_{\Ab}(A_0, \Z^{(\kappa)}) \ra 0.
\]
It follows that $\Ext^1_{\Ab}(A, \Z^{(\kappa)}) \neq 0$. Therefore, 
Lemma \ref{ext_calc_lemma} immediately yields the following corollary, providing an 
alternative proof of clause (2) of Theorem~\ref{main_thm}.

\begin{corollary} \label{cor_11_2}
    Suppose that $A$ is a nonfree abelian group and $\kappa$ is the least 
    cardinality of a nonfree subgroup of $A$. Then
    \[
    \iExt^1_{\CondAb}(\ul{A}, \ul{\Z})(S) \neq 0
    \]
    for every $S \in \mathsf{ED}$ of weight at least $\kappa$. \qed
\end{corollary}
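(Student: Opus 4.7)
The plan is to chain together Lemma~\ref{ext_calc_lemma}, N\"{o}beling's theorem, and the classical $\Ext$-computation carried out in the paragraph immediately preceding the corollary. By Lemma~\ref{ext_calc_lemma},
\[
\iExt^1_{\CondAb}(\ul{A},\ul{\Z})(S) \;=\; \Ext^1_{\Ab}(A,\, C(S,\Z)),
\]
so it suffices to produce a nonzero element of the right-hand side. By N\"{o}beling's theorem, $C(S,\Z)$ is a free abelian group whose rank equals the topological weight of $S$, which by hypothesis is at least $\kappa$. When $\kappa$ is infinite this means $C(S,\Z) \cong \Z^{(\kappa)} \oplus M$ for some abelian group $M$, and the additivity of $\Ext^1_{\Ab}(A, \cdot)$ in its second argument reduces the task to showing $\Ext^1_{\Ab}(A,\Z^{(\kappa)}) \neq 0$.

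The paragraph preceding the corollary essentially provides this last fact. If $|A| = \kappa$, I would take the canonical free resolution $0 \to K \to F \to A \to 0$, where $F$ is free on the underlying set of $A$ and hence $F \cong \Z^{(\kappa)}$; the inclusion $K \hookrightarrow F$, viewed as an element of $\Hom(K, \Z^{(\kappa)})$, cannot extend to $F$, since such an extension would supply a retraction and force $A$ to be free. This inclusion therefore represents a nonzero class in the cokernel $\Ext^1_{\Ab}(A,\Z^{(\kappa)})$. If $|A| > \kappa$, I would fix a nonfree subgroup $A_0 \subseteq A$ of cardinality $\kappa$; applying $\Hom(\cdot, \Z^{(\kappa)})$ to $0 \to A_0 \to A \to A/A_0 \to 0$ and invoking the vanishing of $\Ext^2_{\Ab}$ (i.e.\ the fact that $\Ab$ has global dimension one) yields a surjection $\Ext^1_{\Ab}(A,\Z^{(\kappa)}) \twoheadrightarrow \Ext^1_{\Ab}(A_0, \Z^{(\kappa)})$ whose target is nonzero by the previous case.

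I expect no essential obstacle: the sole nontrivial external input is the rank-equals-weight clause of N\"{o}beling's theorem, while everything beyond it is formal manipulation of the canonical free resolution together with the standard one-dimensionality of $\Ab$. The only item meriting a brief remark is the possibility that $\kappa$ is finite; in that case $A$ contains a cyclic torsion subgroup $\Z/n\Z$ with $n \leq \kappa$, and since $\Ext^1_{\Ab}(\Z/n\Z, \Z) = \Z/n\Z \neq 0$ and $\Z$ is a direct summand of $C(S,\Z)$ for any nonempty $S$, the same long-exact-sequence argument delivers the conclusion without modification.
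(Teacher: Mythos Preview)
Your overall approach is exactly the paper's: apply Lemma~\ref{ext_calc_lemma}, invoke N\"{o}beling to identify $C(S,\Z)$ with a free group of rank equal to the weight of $S$, split off a summand $\Z^{(\kappa)}$, and then show $\Ext^1_{\Ab}(A,\Z^{(\kappa)}) \neq 0$ via the canonical resolution of a nonfree subgroup $A_0$ of size $\kappa$ together with the long exact sequence coming from $0 \to A_0 \to A \to A/A_0 \to 0$.

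There is, however, a slip in your witness for $\Ext^1_{\Ab}(A_0,\Z^{(\kappa)}) \neq 0$. You propose the inclusion $\iota: K \hookrightarrow F \cong \Z^{(\kappa)}$ as an element of $\Hom(K,\Z^{(\kappa)})$ that fails to extend to $F$; but it \emph{does} extend, trivially, via $\mathrm{id}_F$. An extension $\psi: F \to F$ of $\iota$ need only satisfy $\psi|_K = \iota$, which is not the condition for a retraction onto $K$. The correct witness is $\mathrm{id}_K$, viewed as a map $K \to \Z^{(\kappa)}$ via an embedding of $K$ as a direct \emph{summand} of $\Z^{(\kappa)}$ (possible since $K$ is free of rank at most $\kappa$). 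If that map extended to some $\psi: F \to \Z^{(\kappa)}$, then composing with the projection $\Z^{(\kappa)} \to K$ would yield a genuine retraction $F \to K$, splitting the resolution and forcing $A_0$ to be free. Equivalently: the extension class of $0 \to K \to F \to A_0 \to 0$ is a nonzero element of $\Ext^1_{\Ab}(A_0,K)$, and the latter is a direct summand of $\Ext^1_{\Ab}(A_0,\Z^{(\kappa)})$. With this correction, your argument goes through and coincides with the paper's.
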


Lemma \ref{ext_calc_lemma} also sheds additional light 
on the results of Section \ref{ma_sec}, entailing both an alternative 
proof of a variation of that section's main result and limits, in the presence of sufficiently large cardinals, on the extent to 
which this result can be generalized.

We first note that, by results in \cite{eklof_shelah}, it is 
consistent with the axioms of $\ZFC$ that, for 
every cardinal $\kappa$, there is a nonfree abelian group $A$ such that 
$\Ext^1_{\Ab}(A,B) = 0$ for every abelian group $B$ of cardinality 
${\leq} \kappa$. Thus, Lemma \ref{ext_calc_lemma} yields the following 
corollary, establishing clause (2) of Theorem \ref{local_theorem} and 
indicating that, consistently, the positive answer to the interpretation of 
Whitehead's problem in $\CondAb$ established by Theorem \ref{cs_whitehead_thm} 
fails in $\CondAb_\kappa$ for \emph{every} uncountable cardinal $\kappa$.

\begin{corollary} \label{up_cor}
    It is consistent with the axioms of $\ZFC$ that, for every 
    cardinal~$\kappa$, there is a nonfree abelian group $A$ such that 
    $\iExt^1_{\CondAb}(\ul{A},\ul{\Z})(S) = 0$ for all $S \in \mathsf{ED}_\kappa$. 
    In particular, it is consistent that, for every uncountable cardinal 
    $\kappa$, there is a~nonfree abelian group $A$ such that $\ul{A}$ is 
    Whitehead in $\CondAb_{\kappa}$.
\end{corollary}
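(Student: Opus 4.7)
The plan is to combine Lemma \ref{ext_calc_lemma} with N\"{o}beling's theorem and the Eklof--Shelah consistency result cited immediately above, reducing the corollary essentially to a classical fact. First, by Lemma \ref{ext_calc_lemma}, for any abelian group $A$ and any $S \in \mathsf{ED}$ we have
\[
\iExt^1_{\CondAb}(\ul{A},\ul{\Z})(S) \;=\; \Ext^1_{\Ab}(A, C(S,\Z)).
\]
Under the convention on $\mathsf{Prof}_\kappa$ adopted in Definition \ref{def:kappa_cond_set} and its accompanying footnote, $S \in \mathsf{ED}_\kappa$ means that $S$ has fewer than $\kappa$ clopen subsets, equivalently has topological weight $\lambda < \kappa$. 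By N\"{o}beling's theorem, invoked in the passage preceding Corollary \ref{cor_11_2}, $C(S,\Z)$ is then free abelian of rank $\lambda$, so $|C(S,\Z)| = \max(\lambda, \aleph_0) < \kappa$ whenever $\kappa$ is uncountable.

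Next I would invoke the Eklof--Shelah consistency result: in a suitable model of $\mathsf{ZFC}$, for every cardinal $\kappa$ there is a nonfree abelian group $A$ with $\Ext^1_{\Ab}(A, B) = 0$ for every abelian group $B$ of cardinality at most $\kappa$. Working inside such a model, for each uncountable $\kappa$ I would select a nonfree $A$ witnessing the Eklof--Shelah property at $\kappa$. The cardinality bound on $C(S,\Z)$ then yields $\Ext^1_{\Ab}(A, C(S,\Z)) = 0$, and hence $\iExt^1_{\CondAb}(\ul{A}, \ul{\Z})(S) = 0$, for all $S \in \mathsf{ED}_\kappa$. This establishes the first assertion.

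For the ``in particular'' clause, I would observe that a $\kappa$-condensed abelian group is fully determined by its restriction to $\mathsf{ED}_\kappa$, and that the Hom--tensor manipulation in the proof of Lemma \ref{ext_calc_lemma} goes through entirely inside $\CondAb_\kappa$, using $\{\Z[\ul{S}] \mid S \in \mathsf{ED}_\kappa\}$ as compact projective generators. Consequently $\iExt^1_{\CondAb_\kappa}(\ul{A}, \ul{\Z})(S)$ agrees with $\iExt^1_{\CondAb}(\ul{A}, \ul{\Z})(S)$ for $S \in \mathsf{ED}_\kappa$, so the already-established vanishing of the latter over $\mathsf{ED}_\kappa$ upgrades to $\iExt^1_{\CondAb_\kappa}(\ul{A}, \ul{\Z}) = 0$ outright. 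The one potentially delicate point I anticipate is precisely this last compatibility between the ``large'' and $\kappa$-small internal Ext functors; everything else is a straightforward unwinding of the reductions recorded in Lemma \ref{ext_calc_lemma} and the remarks following it.
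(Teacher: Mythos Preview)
Your proposal is correct and follows essentially the same approach as the paper: the paper presents this corollary as an immediate consequence of Lemma~\ref{ext_calc_lemma}, N\"{o}beling's theorem, and the Eklof--Shelah consistency result, exactly as you have reconstructed. Your treatment is in fact more explicit than the paper's, which leaves the ``in particular'' clause and the compatibility between $\iExt^1_{\CondAb_\kappa}$ and $\iExt^1_{\CondAb}$ entirely implicit; your identification of that compatibility as the only delicate point is apt, and your suggested justification via rerunning the proof of Lemma~\ref{ext_calc_lemma} inside $\CondAb_\kappa$ is the natural one.
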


On the other hand, the existence of sufficiently large cardinals will bound the extent to which the phenomena of Corollary \ref{up_cor} can 
occur. In particular, if $\kappa$ is a strongly compact cardinal, then 
$\CondAb_{\kappa}$ is sufficiently rich to 
guarantee a positive answer to the appropriate interpretation of Whitehead's 
problem. The following corollary yields clause (3) of Theorem \ref{local_theorem}.

\begin{corollary} \label{strongly_compact_cor}
    Suppose that $\kappa$ is a strongly compact cardinal and $A$ is an 
    abelian group. If $\ul{A}$ is Whitehead in $\CondAb_\kappa$, then 
    $A$ is free.
\end{corollary}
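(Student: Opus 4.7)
The plan is to combine Lemma~\ref{ext_calc_lemma} with Corollary~\ref{cor_11_2} and a classical strong-compactness theorem on freeness of abelian groups. By Lemma~\ref{ext_calc_lemma}, the hypothesis that $\ul{A}$ is Whitehead in $\CondAb_\kappa$ unpacks as the vanishing of $\Ext^1_{\Ab}(A, C(S,\Z))$ for every $S \in \mathsf{ED}_\kappa$. I will prove the contrapositive: assuming $A$ is nonfree, I will exhibit some $S \in \mathsf{ED}_\kappa$ for which $\iExt^1_{\CondAb}(\ul{A},\ul{\Z})(S) \neq 0$.

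The crucial additional ingredient is the classical theorem (see, e.g., \cite[Chapter~IV]{Eklof_Mekler} and \cite{Magidor_Shelah}) that if $\kappa$ is strongly compact, then every $\kappa$-free abelian group is free. Applied contrapositively, $A$ admits a nonfree subgroup of cardinality strictly less than $\kappa$; let $\lambda<\kappa$ be the least such cardinality. Corollary~\ref{cor_11_2}, applied to $A$ with $\lambda$ in the role of its ``$\kappa$'', then yields $\iExt^1_{\CondAb}(\ul{A},\ul{\Z})(S) \neq 0$ for every $S \in \mathsf{ED}$ of topological weight at least $\lambda$.

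It remains to produce such an $S$ of weight strictly less than $\kappa$. Since a strongly compact cardinal is inaccessible, $2^\lambda < \kappa$. Taking $S$ to be the Stone--\v{C}ech compactification $\beta\lambda$ of the discrete space of cardinality $\lambda$, i.e., the Stone space of the complete Boolean algebra $\power(\lambda)$, one obtains an extremally disconnected profinite set of topological weight $2^\lambda$. It therefore lies in $\mathsf{ED}_\kappa$ and has weight at least $\lambda$, so evaluating at this $S$ yields the desired contradiction. The main obstacle in this argument is the invocation of the $\kappa$-free-implies-free theorem for strongly compact $\kappa$; this is the sole step where strong compactness (rather than mere inaccessibility) is genuinely used, and is a nontrivial application of the almost-free-modules theory whose references should be checked carefully.
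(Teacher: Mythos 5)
Your proof is correct and uses essentially the same ingredients as the paper's: Lemma~\ref{ext_calc_lemma} (via Corollary~\ref{cor_11_2} and N\"{o}beling's theorem) to pass between the condensed $\iExt^1$ and classical $\Ext^1_{\Ab}$ with free coefficients, and the classical theorem that $\kappa$-free groups are free when $\kappa$ is strongly compact (which the paper cites as~\cite[Corollary~II.3.11]{Eklof_Mekler}). The only differences are cosmetic: you argue contrapositively rather than directly, and you make explicit the choice $S = \beta\lambda$ and the use of inaccessibility of $\kappa$ to ensure $S \in \mathsf{ED}_\kappa$, which the paper leaves implicit.
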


\begin{proof}
    Suppose that $\ul{A}$ is Whitehead in $\CondAb_\kappa$. In particular, 
    for for every $S \in \mathsf{ED}_\kappa$, we have $\iExt^1_{\CondAb}(
    \ul{A},\ul{\Z})(S) = 0$. By Lemma~\ref{ext_calc_lemma} and the aforementioned 
    result of N\"{o}beling indicating that, for every $S \in \mathsf{ED}$, 
    $C(S,\Z)$ is the free abelian group on the topological weight of $S$, it follows that, 
    for every cardinal $\lambda < \kappa$, we have $\Ext^1_{\Ab}(A, 
    \Z^{(\lambda)}) = 0$. By the observations in the paragraph immediately 
    preceding Corollary~\ref{cor_11_2}, it follows that $A$ is 
    $\kappa$-free, i.e., every subgroup of $A$ of cardinality ${<}\kappa$ 
    is free. By \cite[Corollary II.3.11]{Eklof_Mekler}, since $\kappa$ 
    is strongly compact, $A$ is free, as desired.\footnote{As 
    \cite[Corollary~II.3.11]{Eklof_Mekler} makes clear, we could have merely assumed 
    that $\kappa$ is only $L_{\omega_1\omega}$-compact, a weaker but lesser-known hypothesis than the strong compactness of $\kappa$.}
\end{proof}

\begin{remark}
    In the proof of Corollary \ref{strongly_compact_cor}, 
    the crucial property 
    of the cardinal $\kappa$ was the fact that every $\kappa$-free 
    group is free. This is true if $\kappa$ is a strongly compact 
    cardinal, but we remark that, consistently, this can also be 
    true for smaller values of $\kappa$. In particular, 
    it is shown in \cite[\S 4]{Magidor_Shelah} that, assuming the consistency of the existence of infinitely many supercompact cardinals, there exists a model of $\ZFC$ in which this condition 
    holds for the first cardinal fixed point of the $\aleph$ function, i.e., the least $\kappa$ such that $\kappa = \aleph_\kappa$.
\end{remark}

\section{Locally compact and solid abelian groups} \label{lca_sec}

In this section, we prove Theorem \ref{thm:LCA_intro}, which we restate here for 
the reader's convenience.

\begin{theorem}
\label{thm:LCA}
Let $A$ be an object of the category $\mathsf{LCA}$ of locally compact abelian groups. Then
$$\underline{\mathrm{Ext}}^1_{\CondAb}(\ul{A},\ul{\mathbb{Z}})=0$$
if and only if $A$ is projective in $\mathsf{LCA}$.
\end{theorem}

\begin{proof}
By the structure theorem for locally compact abelian groups (see \cite[Thm.\ 4.1(i)]{CS1}), any locally compact abelian group $A$ is of the form $\mathbb{R}^n\times B$ for some nonnegative integer $n$, where $B$ is an extension of a compact group by a discrete one, or where, in other words, there exists a short exact sequence
\begin{equation}
\label{eq:compact_ses}
0\to C\to B\to D\to 0
\end{equation}
with $C$ compact and $D$ discrete whose image in $\CondAb$ is exact as well (see \cite[Props.\ 4.9, 2.18]{Aparicio_condensed_21}).
Since $\underline{\mathrm{Ext}}^1_{\CondAb}(\ul{\mathbb{R}},\ul{\mathbb{Z}})=0$ \cite[p.\ 25]{CS1}, we have 
\[
\underline{\mathrm{Ext}}^1_{\CondAb}(\ul{A},\ul{\mathbb{Z}})=0 \; \Leftrightarrow \; \underline{\mathrm{Ext}}^1_{\CondAb}(\ul{B},\ul{\mathbb{Z}})=0.
\]
Consider next the following portion of the long exact sequence deriving from equation (\ref{eq:compact_ses}):
$$\cdots\to 0\to\underline{\mathrm{Ext}}^1_{\CondAb}(\ul{D},\ul{\mathbb{Z}})\to\underline{\mathrm{Ext}}^1_{\CondAb}(\ul{B},\ul{\mathbb{Z}})\to\underline{\mathrm{Ext}}^1_{\CondAb}(\ul{C},\ul{\mathbb{Z}})\to 0\to\cdots$$
(The rightmost $0$ follows from \cite[p.\ 27]{CS1} (e.g., from the comment following Cor.\ 4.9); the leftmost $0$ follows from \cite[Prop.\ 4.2]{CS1}.) Hence $$\iExt^1_{\CondAb}(\ul{A},\ul{\Z}) = \underline{\mathrm{Ext}}^1_{\CondAb}(\ul{B},\ul{\mathbb{Z}})=0$$ if and only if
\begin{enumerate}
\item $\underline{\mathrm{Ext}}^1_{\CondAb}(\ul{D},\ul{\mathbb{Z}})=0$ and
\item $\underline{\mathrm{Ext}}^1_{\CondAb}(\ul{C},\ul{\mathbb{Z}})=0$.
\end{enumerate}
By Theorem \ref{cs_whitehead_thm}, item (1) holds if and only if $D=\bigoplus_I\mathbb{Z}$ for some $I$.
Also, the argument of \cite[Prop.\ 4.11]{Aparicio_condensed_21} together with \cite[Prop.\ 4.3(ii)]{CS1} shows that
$\underline{\mathrm{Ext}}^1_{\CondAb}(\ul{C},\ul{\mathbb{R}})=0$; this together with the fact that $\ul{\mathrm{Hom}}(\ul{C},\ul{\mathbb{R}})=\ul{\mathrm{Hom}(C,\mathbb{R})}=0$ (\cite[Prop.\ 4.2]{CS1}) implies that the map $\ul{\mathrm{Hom}}(\ul{C},\ul{\mathbb{R}/\mathbb{Z}})\to\underline{\mathrm{Ext}}^1_{\CondAb}(\ul{C},\ul{\mathbb{Z}})$ appearing in the long exact sequence induced by applying $\ul{\mathrm{Hom}}(\ul{C},-)$ to $\ul{\Z}\to\ul{\R}\to\ul{\R/\Z}$ is an isomorphism. Again applying \cite[Prop.\ 4.2]{CS1}, we have $\underline{\mathrm{Ext}}^1_{\CondAb}(\ul{C},\ul{\mathbb{Z}})=\ul{\mathrm{Hom}(C,\mathbb{R}/\mathbb{Z})}=\ul{\mathbb{D}(C)}$, the condensed image of the Pontryagin dual group of $C$; in particular, item (2) holds if and only if $C=0$. Summing up, $\underline{\mathrm{Ext}}^1_{\CondAb}(\ul{A},\ul{\mathbb{Z}})=0$ if and only if $A$ is of the form
$\mathbb{R}^n\times D$ for some $D=\bigoplus_I\mathbb{Z}$. As noted, by \cite[Prop.\ 3.3]{Moskowitz_homological_67}, such $A$ are precisely the projective objects of $\mathsf{LCA}$.
\end{proof}

This theorem carries as corollary a negative answer to one other condensed version of Whitehead's Problem:
\begin{corollary}
\label{Cor:condensedWhiteheadfails}
As the condensed abelian group $\ul{\mathbb{R}}$ witnesses, $\underline{\mathrm{Ext}}^1_{\CondAb}(A,\ul{\mathbb{Z}})=0$ does not imply that $A$ is projective in $\CondAb$.
\end{corollary}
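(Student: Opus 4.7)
The plan is to verify the two claims implicit in the statement. First, $\iExt^1_{\CondAb}(\ul{\R}, \ul{\Z}) = 0$ is immediate from the theorem just proved, since $\R = \R^1 \times 0$ is projective in $\mathsf{LCA}$. Hence the work lies in showing that $\ul{\R}$ is not projective in $\CondAb$.

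For this I would exploit the fact recalled in Section \ref{condensed_sec} that $\{\Z[\ul{S}] \mid S \in \mathsf{ED}\}$ forms a family of compact projective generators for $\CondAb$. Consequently, any projective object $P$ of $\CondAb$ is a retract of some direct sum $\bigoplus_{i \in I} \Z[\ul{S_i}]$ with each $S_i \in \mathsf{ED}$. Applying the evaluation-at-$\ast$ functor $T \mapsto T(\ast) = \Hom_{\CondAb}(\ul{\Z}, T)$ --- which is exact, since $\ul{\Z} = \Z[\ul{\ast}]$ is projective, and preserves direct sums, since $\ul{\Z}$ is compact --- exhibits $P(\ast)$ as a direct summand of $\bigoplus_{i \in I} \Z[\ul{S_i}](\ast) = \bigoplus_{i \in I} \Z^{(S_i)}$, the free abelian group on the underlying set $\bigsqcup_{i\in I} S_i$. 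In particular, $P(\ast)$ must be a free abelian group.

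I would then conclude by observing that $\ul{\R}(\ast) = \R$ is not free, since it is nonzero and divisible while any free abelian group is reduced. Hence $\ul{\R}$ cannot be projective in $\CondAb$. No real obstacle arises; the corollary is a direct consequence of the theorem combined with the structural description of projective objects in $\CondAb$.
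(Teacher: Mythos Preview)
Your argument is correct and follows essentially the same route as the paper: identify projectives in $\CondAb$ as retracts of sums $\bigoplus_i \Z[\ul{S_i}]$, evaluate at $\ast$, and observe that $\R$ cannot embed as a summand of a free abelian group because it is divisible. Your packaging via exactness and compactness of the functor $T \mapsto T(\ast) = \Hom_{\CondAb}(\ul{\Z},T)$ is in fact slightly cleaner than the paper's more hands-on phrasing.

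The one step you pass over is the identity $\Z[\ul{S}](\ast) = \Z^{(S)}$. This is not entirely formal: $\Z[\ul{S}]$ is defined as the \emph{sheafification} of the presheaf $T \mapsto \bigoplus_{C(T,S)} \Z$, so one must check that sheafification does not alter the value at $\ast$. The paper isolates exactly this computation as Lemma~\ref{lemma:underlying_set} and verifies it by observing that the identity cover of $\ast$ is cofinal among all covers, so the plus-construction is trivial there. You should either supply this short argument or cite the lemma rather than treating the equality as immediate.
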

To see this, recall that, as noted, the compact projective condensed abelian groups of the form $\mathbb{Z}[\ul{S}]$ with $S \in \mathsf{ED}$ generate $\CondAb$ (\cite[p.\ 16--17 and Proposition~3.5]{CS3}); in particular, for any condensed abelian group $X$ there exists a set $I$, a~collection $\{S_i\mid i\in I\}$ of spaces in $\mathsf{ED}$, and an epimorphism $r:\bigoplus_I\mathbb{Z}[\ul{S_i}]\to X$.
If $X$ is projective then $r$ admits a right inverse $s$, hence:
\begin{proposition}
\label{prop:proj}
The projective objects of $\CondAb$ are exactly the retracts of direct sums of groups of the form $\mathbb{Z}[\ul{S}]$ with $S$ extremally disconnected.
\end{proposition}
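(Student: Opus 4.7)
The plan is to verify the two inclusions that the proposition asserts, relying on the compact projectivity and generation facts recalled in the paragraph immediately preceding the statement.

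For the containment from right to left, I first recall that the objects $\Z[\ul{S}]$ with $S \in \mathsf{ED}$ are compact projective generators of $\CondAb$, as cited from \cite{CS3}. I would then note two general facts: a direct sum $\bigoplus_{i \in I} P_i$ of projective objects in an abelian category with arbitrary coproducts is itself projective, and a retract of a projective object is projective. Both assertions follow routinely from diagram chasing together with the universal property of the coproduct; the first, in particular, follows because $\mathrm{Hom}_{\CondAb}(\bigoplus_I P_i, -) \cong \prod_I \mathrm{Hom}_{\CondAb}(P_i, -)$ carries epimorphisms to epimorphisms whenever each factor $\mathrm{Hom}(P_i, -)$ does. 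Combining these two facts with the projectivity of each $\Z[\ul{S}]$ shows that every retract of a direct sum $\bigoplus_I \Z[\ul{S_i}]$ is projective in $\CondAb$.

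For the containment from left to right, suppose that $X$ is a projective object of $\CondAb$. Since $\{\Z[\ul{S}] \mid S \in \mathsf{ED}\}$ generates $\CondAb$, for every condensed abelian group $X$ one can construct an epimorphism
\[
r : \bigoplus_{i \in I} \Z[\ul{S_i}] \twoheadrightarrow X
\]
with each $S_i \in \mathsf{ED}$. Concretely, one may take $I$ to index all pairs $(S, f)$ where $S \in \mathsf{ED}$ and $f \in \mathrm{Hom}_{\CondAb}(\Z[\ul{S}], X)$ and let $r$ be assembled from these maps by the universal property of the coproduct; surjectivity on every $\Z[\ul{T}]$-section (equivalently, on every extremally disconnected $T$) then follows from the adjunction $\mathrm{Hom}_{\CondAb}(\Z[\ul{T}], X) = X(T)$. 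Projectivity of $X$ applied to the identity $X \to X$ and the epimorphism $r$ produces a section $s : X \to \bigoplus_{i \in I} \Z[\ul{S_i}]$ with $r \circ s = \mathrm{id}_X$, exhibiting $X$ as a retract of the indicated direct sum.

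I do not anticipate a substantive obstacle here; the argument is essentially formal, and all of the non-formal inputs (compact projectivity of each $\Z[\ul{S}]$, the generation statement, existence of arbitrary coproducts in $\CondAb$, and compatibility of coproducts with projectivity) are either standard or already cited in the paper. The one point that merits a sentence of care is the verification that $\bigoplus_I \Z[\ul{S_i}]$ really surjects onto $X$ in $\CondAb$, since epimorphisms of sheaves are tested locally; this is precisely what the choice of indexing set above guarantees via the Yoneda-type identity $\mathrm{Hom}_{\CondAb}(\Z[\ul{T}], X) = X(T)$ for $T \in \mathsf{ED}$.
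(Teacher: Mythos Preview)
Your proposal is correct and takes essentially the same approach as the paper: the paper's argument (given in the paragraph immediately preceding the proposition) uses the fact that the $\Z[\ul{S}]$ with $S \in \mathsf{ED}$ are compact projective generators to produce an epimorphism $r:\bigoplus_I \Z[\ul{S_i}] \to X$, and then observes that projectivity of $X$ yields a section $s$ of $r$. Your write-up simply fills in more of the standard details, including the explicit verification of the converse direction (retracts of direct sums of projectives are projective), which the paper leaves implicit.
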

Hence it will suffice to show that $\mathbb{R}$ is not a retract of such a sum.
This is immediate from the following lemma.
\begin{lemma}
\label{lemma:underlying_set}
The underlying group of $\mathbb{Z}[\ul{S}]$ is the free abelian group $\bigoplus_S\mathbb{Z}$.
\end{lemma}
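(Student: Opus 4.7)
\textbf{Plan}: The plan is to compute $\mathbb{Z}[\ul{S}](*)$ directly, via the construction of $\mathbb{Z}[\ul{S}]$ as the sheafification of the presheaf $P:\mathsf{ED}^{\mathrm{op}}\to\mathsf{Ab}$ sending $T$ to the free abelian group $P(T)=\mathbb{Z}[\ul{S}(T)]=\mathbb{Z}[C(T,S)]$. Since $P(*)=\mathbb{Z}[C(*,S)]=\mathbb{Z}[S]=\bigoplus_S\mathbb{Z}$, the task reduces to showing that the canonical comparison map $P(*)\to P^{\mathrm{sh}}(*)=\mathbb{Z}[\ul{S}](*)$ is an isomorphism.

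For injectivity, I would observe that for each nonempty $T\in\mathsf{ED}$, the restriction $P(*)\to P(T)$ sends a basis element $[s]$ to $[c_s]$, where $c_s:T\to S$ is the constant function at $s$. As distinct $s\neq s'$ in $S$ yield distinct constant functions in $C(T,S)$, this restriction is injective on basis elements, hence injective on the whole free abelian group. In particular, no nonzero element of $P(*)$ becomes locally zero, so the separation step of sheafification acts trivially at $*$.

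Surjectivity is the substantive step. Every element of $\mathbb{Z}[\ul{S}](*)$ is represented by a descent datum over some cover $T\to*$ in $\mathsf{ED}$, namely an element $\alpha\in P(T)$ satisfying the cocycle equation $p_1^*\alpha=p_2^*\alpha$ in $\mathbb{Z}[C(T\times T,S)]$. Writing $\alpha=\sum_i n_i[f_i]$ with $f_i\in C(T,S)$ distinct and all $n_i\neq 0$, I claim every $f_i$ must be constant. Suppose some $f_i$ is not constant. Then $f_i\circ p_1$ differs from $f_k\circ p_1$ for every $k\neq i$ (since $f_i\circ p_1=f_k\circ p_1$ combined with the surjectivity of $p_1$ forces $f_i=f_k$), and differs from $f_j\circ p_2$ for every $j$: for if $f_i(x)=f_j(y)$ for all $(x,y)\in T\times T$, then fixing $y$ forces $f_i$ to be constant, contradicting our assumption. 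Hence $[f_i\circ p_1]$ appears as an independent basis element on the left side of the cocycle equation with coefficient $n_i\neq 0$, but has coefficient $0$ on the right, a contradiction. Thus every $f_i$ is constant, and $\alpha=\sum_i n_i[c_{s_i}]$ lies in the image of $P(*)\to P(T)$, giving the surjectivity.

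The principal technical subtlety is that sheafification on a general site can require iterating the plus construction and can involve covers by disjoint unions; but at the connected one-point space $*$ the only relevant covers are single surjections $T\to*$, and the argument above shows that the descent data over such covers already come from $P(*)$. Combining injectivity and surjectivity yields $\mathbb{Z}[\ul{S}](*)\cong\mathbb{Z}[S]=\bigoplus_S\mathbb{Z}$, as desired.
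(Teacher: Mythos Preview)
Your approach is essentially correct but different from the paper's. The paper observes that, since every nonempty profinite set has a point, every cover of $*$ admits a section and hence is refined by the trivial cover $\{\mathrm{id}:*\to *\}$; this cover is therefore cofinal in $\mathrm{cov}(*)$, so $\mathcal{F}^+(*)=\mathcal{F}(*)$ for \emph{any} presheaf $\mathcal{F}$, whence $\mathcal{F}^\sharp(*)=\mathcal{F}^{++}(*)=\mathcal{F}(*)$ with no further work. You instead analyze descent data for the specific presheaf $P$ directly, showing by a pleasant combinatorial argument that any cocycle over $T\to *$ must consist of constant functions. This correctly yields $P^+(*)=P(*)$, but your closing paragraph glosses over the second application of the plus construction: to finish cleanly you should either note that $P$ is already separated (the map $P(U)\to P(V)$ is injective for every surjection $V\twoheadrightarrow U$, by the same reasoning as your injectivity step, so one plus suffices), or invoke the cofinality observation after all. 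A further wrinkle: you place $P$ on $\mathsf{ED}$, but $T\times T$ need not be extremally disconnected, so your cocycle condition is better phrased in $\mathsf{Prof}$, as in the paper. The paper's route is slicker and works uniformly for all presheaves; yours has the virtue of making explicit exactly what the matching condition forces.
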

By the lemma, any section $s:\ul{\mathbb{R}}\to\bigoplus_I\mathbb{Z}[\ul{S_i}]$ of an $r$ as above would evaluate over a point $\ast$ as an injection $s(*):\mathbb{R}\to\bigoplus_I\bigoplus_{S_i}\mathbb{Z}$, but any nonzero element in the image of $s(*)$ would need to be divisible by every positive $n$, a contradiction.
The following, therefore, will complete the proof of Corollary~\ref{Cor:condensedWhiteheadfails}.
\begin{proof}[Proof of Lemma~\ref{lemma:underlying_set}]
As noted, $\mathbb{Z}[\ul{S}]$ is the sheafification of the presheaf
\begin{equation}
\label{eq:Z[S]}T\mapsto \bigoplus_{\mathrm{Cont}(T,S)}\mathbb{Z}
\end{equation}
Recall also that the sheafification $\mathcal{F}^\sharp$ of a presheaf $\mathcal{F}$ on a site is given by the twofold iteration of the $+$ operation, where
$$\mathcal{F}^{+}(U)=\mathrm{colim}_{\mathcal{U}\in\mathrm{cov}(U)}\,\,\mathrm{H}^0(\mathcal{U};\mathcal{F}).$$
Here
$$\mathrm{H}^0(\mathcal{U};\mathcal{F})=\{(s_i)\in\prod_{i\in I} \mathcal{F}(U_i)\mid s_i |_{U_i\times_U U_j} = s_j |_{U_i\times_U U_j}\textnormal{ for all }i,j\in I\}$$
with respect to a~given cover $\mathcal{U}=\{U_i\to U\mid i\in I\}$ (see \cite[\href{https://stacks.math.columbia.edu/tag/00W1}{Tag 00W1}]{stacks-project}), and $\mathcal{U}\prec\mathcal{V}=\{V_j\to U\mid j\in J\}$ in $\mathrm{cov}(U)$ if there exists an $a:J\to I$ and maps $V_j\to U_{a(j)}$ factoring $V_j\to U$ through $U_{a(j)}\to U$ for all $j\in J$.
Observe now that if $U=*$ and $\mathcal{F}$ is the presheaf (\ref{eq:Z[S]}) over the condensed site $*_{\text{pro\'{e}t}}$ then the cover $\mathcal{U}_\mathrm{id}:=\{\mathrm{id}:*\to *\}$ is cofinal in $\mathrm{cov}(U)$;
it follows that $\mathcal{F}^\sharp(*)=\mathcal{F}^{+}(*)=\bigoplus_S\mathbb{Z}$, as claimed.
\end{proof}

Theorem \ref{thm:LCA} opens onto broader questions about alternative interpretations of Whitehead's problem in the condensed setting. These may be thought of as defined by two main parameters: the subscript $s$ of the $\underline{\mathrm{Ext}}^1$ term, and the domain $d$ of $\underline{\mathrm{Ext}}^1_s(\cdot,\underline{\mathbb{Z}})$.
After all, in retrospect, it is not altogether surprising that ($s=\mathsf{CondAb}$)-valued invariants $\underline{\mathrm{Ext}}^1_s$ should be strong enough to settle the problem over the comparatively weaker domains of $d=\mathsf{Ab}$ or even $d=\mathsf{LCA}$ (we hope to have shown that it is altogether \emph{interesting} nevertheless).
A natural next question, then, is how much further this $d$ may be pushed.
By Corollary~\ref{Cor:condensedWhiteheadfails}, of course, Whitehead's problem has a $\ZFC$ solution when $d=\mathsf{CondAb}$ as well, but it is an unsatisfying one, deriving as it does from the arguably spurious case of $\underline{\mathrm{Ext}}^1_{\mathsf{CondAb}}(\underline{\mathbb{R}},\underline{\mathbb{Z}})=0$.
Against this background, particularly interesting is the problem when $d$ equals the subcategory $\mathsf{Solid}$ of $\mathsf{CondAb}$ mentioned briefly in our introduction.
Forming, as it does, the paradigmatic example of an analytic ring, this is one of $\mathsf{CondAb}$'s most critical subcategories (see \cite[\S 5--7]{CS1} and \cite[\S 2]{CS2}); moreover, as noted, it was this category's features which Clausen and Scholze leveraged in their original proof of Theorem~\ref{cs_whitehead_thm}.
The solid setting, being ``very much a `nonarchimedean' notion'' \cite[footnote 10 on p.\ 32]{CS1}, is also one in which $\underline{\mathbb{R}}$ does not appear.\footnote{More precisely, the inclusion $\mathsf{Solid}\to\mathsf{CondAb}$ possesses a left adjoint \emph{solidification} functor notated $M\mapsto M^\blacksquare$, and the solidification $\underline{\mathbb{R}}^\blacksquare$ of $\underline{\mathbb{R}}$ is $0$ \cite[Corollary~6.1(iii)]{CS1}.}
Within it, lastly, $\underline{\mathrm{Ext}}^1_{\mathsf{CondAb}}$ and $\underline{\mathrm{Ext}}^1_{\mathsf{Solid}}$ coincide \cite[Lemma~5.9]{CS1}, so that this instance of the problem recovers the $s=d$ symmetry of the classical one:
\begin{question}
\label{Ques:Solid}
Let $A$ be a solid abelian group. Does $\underline{\mathrm{Ext}}^1_{\mathsf{Solid}}(A,\underline{\mathbb{Z}})=0$ (or equivalently $\underline{\mathrm{Ext}}^1_{\mathsf{CondAb}}(A,\underline{\mathbb{Z}})=0$) imply that $A$ is projective?
\end{question}
We thank Peter Scholze for answering with a multifaceted \emph{no}; the following observations are essentially his.

First, for the solid abelian group $A=\prod_{\mathbb{N}}\bigoplus_{\mathbb{N}}\ul{\mathbb{Z}}$ and any $n\geq 0$,
$$\ul{\mathrm{Ext}}^n_{\mathsf{CondAb}}(A,\ul{\mathbb{Z}})(S)\cong\mathrm{lim}^n\,\mathbf{A}[C(S,\mathbb{Z})],$$
where $\mathbf{A}[H]$ denotes for any abelian group $H$ the inverse system indexed by functions $f:\bb{N}\to\bb{N}$, with terms $A_f:=\bigoplus_{i\in\bb{N}}H^{(f(i))}$
and natural projection maps $A_g\to A_f$ for any $f$ which is pointwise dominated by $g$.
See \cite{blh_inf_comb} for details of this conversion; the immediate point, though, is that these derived limits have been the subject of much recent study (see \cite{SVHDL,SVHDLwLC,VV}), from which the consistency with the $\ZFC$ axioms of the statement that
$$\mathrm{lim}^1\,\mathbf{A}[H]=0\;\text{ but }\mathrm{lim}^2\,\mathbf{A}[H]\neq 0$$
for every nontrivial abelian group $H$, for example, is known. In fact this holds in any model of the Proper Forcing Axiom; whenever it does, $A$ will clearly witness a negative answer to Question \ref{Ques:Solid}.

Left open in the preceding analysis is the possibility of a consistently affirmative answer to Question \ref{Ques:Solid}, since
$$\mathrm{lim}^n\,\mathbf{A}[H]=0\text{ for all }n>0\text{ and abelian groups }H$$
and hence
$$\underline{\mathrm{Ext}}^n_{\mathsf{Solid}}(A,\underline{\mathbb{Z}})=0\text{ for all }n>0$$
is also consistent with the $\ZFC$ axioms \cite[Thm.\ 3]{Bannister}.
By the following argument, though, $A$ witnesses a negative answer to Question \ref{Ques:Solid} in this scenario as well. 

Consider the surjection
$$t:\bigoplus_{\mathbb{N}^\mathbb{N}}\prod_{\mathbb{N}}\mathbb{Z}\to\prod_{\mathbb{N}}\bigoplus_{\mathbb{N}}\mathbb{Z}$$
defined as follows: fix $g:\mathbb{N}\to\mathbb{N}$ and an element $a$ of the $g^{\mathrm{th}}$ summand of the domain of $t$; clearly it will suffice to describe the effect of $t$ on such $a$.
Here it is convenient to adopt the (elsewhere innocuous) convention that $0$ is not a natural number: the first $g(1)$ many entries in $a$ are then mapped to the first $g(1)$ summands of the first factor of $\prod_{\mathbb{N}}\bigoplus_{\mathbb{N}}\mathbb{Z}$, the next $g(2)$ many entries to the first $g(2)$ many summands of the second factor of $\prod_{\mathbb{N}}\bigoplus_{\mathbb{N}}\mathbb{Z}$, and so on.
Equipping $\mathbb{Z}$ with the discrete topology and letting products and sums induce the product and coproduct topologies, respectively, we may view $t$ as a continuous mapping of topological abelian groups.
Consider the condensed image $\ul{t}$ of $t$: observe that its codomain is $A$, its domain is solid, and that it remains an epimorphism (cf.\ \cite[Prop.\ 2.18]{Aparicio_condensed_21}). Thus, to show that $A$ is not projective in $\mathsf{Solid}$, it will suffice to show that $\ul{t}$ does not split, and for this it will suffice to show that the mapping $\ul{t}(*)=t$ of discrete abelian groups described above possesses no right-inverse $s$. By \cite[Cor.\ III.1.9]{Eklof_Mekler}, though, for any
$$s:\prod_{\mathbb{N}}\bigoplus_{\mathbb{N}}\mathbb{Z}\to\bigoplus_{\mathbb{N}^\mathbb{N}}\prod_{\mathbb{N}}\mathbb{Z}$$
there exists an $m\in\mathbb{N}$ and finite $F\subseteq\mathbb{N}^\mathbb{N}$ such that the image of the restriction of $s$ to $\prod_{n\geq m}\bigoplus_{\mathbb{N}}\mathbb{Z}$ is contained within $\bigoplus_{f\in F}\prod_{\mathbb{N}}\mathbb{Z}$.
Letting $k=\max\{f(m)\mid f\in F\}+1$ and writing $x$ for any element of $\prod_{n\geq m}\bigoplus_{\mathbb{N}}\mathbb{Z}$ which is nonzero at the $k^{\mathrm{th}}$ summand of the $m^{\mathrm{th}}$ factor, we must then have $ts(x)\neq x$. This shows that $t$ possesses no right-inverse,\footnote{One can alternatively rule out such a right-inverse via the $\alpha=2$ instance of \cite[Thm.\ X.2.3]{Eklof_Mekler}.} and hence that $A$ is not projective, completing the argument.

Even this doesn't altogether settle the question, though, since it is also consistent with the $\ZFC$ axioms (and follows from the continuum hypothesis) that $\ul{\mathrm{Ext}}^1(A,\ul{\bb{Z}})\neq 0$ \cite[Thm.\ 2]{Mardesic}.
For a \emph{model-independent} negative answer to Question \ref{Ques:Solid}, take the solid abelian group $B=\ul{\bb{Z}_p}/\ul{\bb{Z}[1/q]}$ for any distinct prime numbers $p$ and $q$.
As noted in Clausen's Lecture 11 of \cite{analytic_stacks},
$$\mathrm{Ext}^2_{\mathsf{CondAb}}(B,\ul{\bb{Z}})=\bb{Z}_q/\bb{Z}[1/p],$$ hence $B$ is not projective; it remains only to check that it is Whitehead.
In fact, for the numerator $$\ul{\bb{Z}_p}=\varprojlim\,(\ul{\bb{Z}/p\bb{Z}}\leftarrow\ul{\bb{Z}/p^2\bb{Z}}\leftarrow\cdots)$$
of $B$, we have
\begin{align*}
R\ul{\mathrm{Hom}}_{\mathsf{CondAb}}(\ul{\bb{Z}_p},\ul{\bb{Z}}) & = \varinjlim_n R\ul{\mathrm{Hom}}_{\mathsf{CondAb}}(\ul{\bb{Z}/p^n\bb{Z}},\ul{\bb{Z}}) \\ & = \varinjlim_n \,(\ul{\bb{Z}/p^n\bb{Z}})[-1] \\ & = (\ul{\bb{Z}[1/p]/\bb{Z}})[-1]
\end{align*}
(see \cite[Rmk.\ 2, p.\ 2]{AMathew} for the first equality), and for its denominator
$$\ul{\bb{Z}[1/q]}=\varinjlim\,(\ul{\bb{Z}}\xrightarrow{\times q}\ul{\bb{Z}}\xrightarrow{\times q}\cdots),$$
we have
\begin{align*}
R\ul{\mathrm{Hom}}_{\mathsf{CondAb}}(\ul{\bb{Z}[1/q]},\ul{\bb{Z}}) & = R\mathrm{lim}\; (R\ul{\mathrm{Hom}}_{\mathsf{CondAb}}(\ul{\bb{Z}},\ul{\bb{Z}})\xleftarrow{\times q}R\ul{\mathrm{Hom}}_{\mathsf{CondAb}}(\ul{\bb{Z}},\ul{\bb{Z}})\leftarrow\cdots) \\ & = R\mathrm{lim}\;(\ul{\bb{Z}}\xleftarrow{\times q}\ul{\bb{Z}}\xleftarrow{\times q}\cdots) \\ & = (\ul{\bb{Z}_q/\bb{Z}})[-1].
\end{align*}
Hence, applying $R\ul{\mathrm{Hom}}_{\mathsf{CondAb}}(\,\cdot\,,\ul{\bb{Z}})$ to the short exact sequence
$$0\to\ul{\bb{Z}[1/q]}\to\ul{\bb{Z}_p}\to\ul{\bb{Z}_p}/\ul{\bb{Z}[1/q]}\to 0$$
we find that
$$R\ul{\mathrm{Hom}}_{\mathsf{CondAb}}(\ul{\bb{Z}_p}/\ul{\bb{Z}[1/q]},\ul{\bb{Z}})=(\ul{\bb{Z}_q}/\ul{\bb{Z}[1/p]})[-2]$$
and that
$\ul{\mathrm{Ext}}^1(B,\ul{\bb{Z}})$, in particular, equals zero.

\section{Conclusion} \label{conclusion}

The project leading to this article began with a desire 
simply to understand the combinatorial phenomena underlying 
considerations of Whitehead's problem in the setting of condensed 
abelian groups, but quickly expanded into the beginnings of a more 
general investigation into connections between condensed mathematics 
and forcing, an investigation that we hope to see further developed 
in the future. These connections have manifested herein in a 
number of ways, notably in
\begin{enumerate}
    \item the recognition that condensed objects can often be 
    viewed as organized presentations of forcing names, and
    \item the observation that interesting phenomena arise via the evaluation of condensed objects at the 
    Stone spaces of complete Boolean algebras significant in set theoretic forcing (cf.\ clause (2) 
    of Theorem~\ref{main_thm}).
\end{enumerate}
In the context of Whitehead's problem, these investigations 
culminated in our Theorem~\ref{forcing_thm} that, if 
$A$ is an abelian group and $\B$ is a complete Boolean algebra 
which forces a certain technical strengthening of 
the failure of $A$ to be Whitehead, then 
$\iExt^1_{\CondAb}(\ul{A}, \ul{\Z})(S(\B)) \neq 0$. We conjecture that this technical strengthening can be foregone:

\begin{conjecture} \label{conjecture}
    Suppose that $A$ is an abelian group, $\B$ is a complete 
    Boolean algebra, and 
    \[
      \Vdash_{\B} ``A \text{ is not Whitehead}".
    \]
    Then $\iExt^1_{\CondAb}(\ul{A}, \ul{\Z})(S(\B)) \neq 0$.
\end{conjecture}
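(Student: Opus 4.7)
The plan is first to invoke Lemma~\ref{ext_calc_lemma} to rewrite the desired conclusion as $\Ext^1_{\mathsf{Ab}}(A, R) \neq 0$ with $R := C(S(\B), \Z)$. Recall from Section~\ref{stone_sec} the dictionary under which a homomorphism $\tau \in \Hom(K, R)$ corresponds to a $\B$-name $\dot\tau$ for a homomorphism $K \to \Z$ that is \emph{$V$-bounded}, in the sense that for some $e : B_K \to \omega$ in $V$ one has $\Vdash_{\B} |\dot\tau(t)| \leq e(t)$ for all $t \in B_K$, and analogously for $\Hom(F, R)$; moreover $\tau$ extends in $V$ exactly when its $\B$-name avatar extends via a bounded $\B$-name. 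The conjecture is therefore equivalent to the assertion that, whenever $\B$ forces $A$ to not be Whitehead, it already forces the existence of a bounded non-extending witness---precisely the hypothesis of Theorem~\ref{forcing_thm}, which would then close the argument.

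My approach is accordingly to extract, from any $\B$-name $\dot\tau$ witnessing that $A$ is not Whitehead, a bounded non-extending replacement $\dot\tau'$. The direct attempt---passing below a condition $b \in \B^+$ under which $\dot\tau$ becomes $V$-bounded---succeeds if $\B$ is $\omega^{\omega}$-bounding but fails for Cohen-type forcings, where every positive condition still permits $V$-unbounded behavior of $\dot\tau$ across infinitely many $t \in B_K$ simultaneously. The decisive move, then, would be to modify $\dot\tau$ by subtracting $\check\sigma\restriction K$ for some $\sigma \in \Hom(F, \Z)^{V^{\B}}$: such a subtraction does not alter the class $[\dot\tau] \in \Ext^1(A, \Z)^{V^{\B}}$, so any $\sigma$ for which $\dot\tau - \check\sigma\restriction K$ becomes $V$-bounded yields the desired witness. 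To construct $\sigma$, I would filter $A$ by a continuous $\in$-chain $\langle A_\alpha \mid \alpha < \cf(|A|) \rangle$ of $V$-subgroups of smaller cardinality, form the induced filtrations $F_\alpha := \pi^{-1}[A_\alpha]$ and $K_\alpha := K \cap F_\alpha$ of $F$ and $K$, and recursively define $\sigma$ along this chain, at successor stages using the non-freeness of the quotient $A_{\alpha+1}/A_\alpha$ together with the unbounded behavior of $\dot\tau$ on a basis of $K_{\alpha+1}/K_\alpha$ to choose $\sigma\restriction F_{\alpha+1}$ so as to absorb that unboundedness.

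The main obstacle is balancing the two competing aims throughout this recursion: the absorbing corrections must be $V^{\B}$-consistent and sufficiently aggressive to flatten $\dot\tau$, while simultaneously sufficiently restrained that the residual $\dot\tau - \check\sigma\restriction K$ does not itself extend in $V^{\B}$, which would collapse the class $[\dot\tau]$. Concretely, $\sigma$ must at no stage approximate a $V^{\B}$-extension of $\dot\tau\restriction K_{\alpha+1}$, which seems to demand preserving a stationary set of levels $\alpha$ at which the quotient $A_{\alpha+1}/A_\alpha$ contributes a genuinely uncancelled non-Whitehead obstruction, in the spirit of the stationary-set analysis of Theorem~\ref{thm:main_thm_1}---but now intertwined with the largely uncontrolled behavior of the $\B$-name $\dot\tau$. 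It is precisely this entanglement of the forcing-theoretic data of $\dot\tau$ with the almost-free combinatorics of $A$ that I expect to constitute the deep difficulty of the conjecture, and that would require either an ingenious combinatorial extraction of bounded witnesses or an entirely different approach---perhaps leveraging the sheaf-theoretic structure of $\iExt^1(\ul{A}, \ul{\Z})$ on $\mathsf{ED}$---to navigate.
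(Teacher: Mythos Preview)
The statement you are addressing is labeled \emph{Conjecture} in the paper, and the paper does not prove it. Immediately after stating it, the authors write that ``a resolution of this conjecture will shed light not just on the relationship between forcing and condensed mathematics, but also on considerations of Whitehead's problem in the classical setting''; it is explicitly presented as open. There is therefore no proof in the paper against which to compare your proposal.

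Your proposal is likewise not a proof: you outline a plausible strategy, correctly reduce the conjecture (via Lemma~\ref{ext_calc_lemma} and the dictionary of Sections~\ref{top_spaces_sec}--\ref{compact_sec}) to the production of a $V$-bounded non-extending witness in $V^{\B}$, observe that this is exactly the hypothesis of Theorem~\ref{forcing_thm}, and then candidly identify the obstacle---controlling the subtraction $\dot\tau - \check\sigma\restriction K$ so that it becomes bounded without becoming extendible---as the ``deep difficulty of the conjecture.'' That analysis is accurate and aligns with the paper's own framing: Theorem~\ref{forcing_thm} already establishes the conjecture under the additional boundedness hypothesis, and the paper's discussion makes clear that removing that hypothesis is precisely what remains. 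Your recursive absorption idea along a filtration of $A$ is a reasonable line of attack, but as you yourself note, the entanglement of the forcing data of $\dot\tau$ with the almost-free structure of $A$ is not resolved, and nothing in your sketch indicates how to guarantee simultaneously that the corrected name is bounded and that it remains non-extendible. In short: both you and the paper leave the statement open, and your diagnosis of where the difficulty lies is consistent with the paper's.
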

Note that the converse of this conjecture does not hold, as shown in the discussion following Corollary~\ref{forcing_cor}.
We feel that a resolution of this conjecture will shed light 
not just on the relationship between forcing and condensed mathematics, 
but also on considerations of Whitehead's problem in the classical 
setting.

\section*{Acknowledgements}

A portion of this work was carried out while all three authors 
were participating in the Thematic Program on Set Theoretic Methods in 
Algebra, Dynamics and Geometry at the Fields Institute in spring of 2023. 
We thank the Fields Institute for their support and hospitality. 
The results of this paper were presented in detail at the Cornell Logic 
Seminar in Fall 2023. We would like to thank the participants of the 
seminar, particularly Mark Schachner and Justin Moore, for their 
engagement with the material and for a number of helpful remarks, and to thank Peter Scholze for valuable remarks on the paper as well. We thank Radek Honz\'{i}k 
for pointing out a problem with an earlier formulation of Theorem \ref{thm:cohen} 
and Nick Rozenblyum for pointing out a gap in an earlier version of 
Theorem \ref{main_thm}.
Finally, we thank the anonymous referee for a number of helpful suggestions and corrections.

\bibliographystyle{plain}
\bibliography{bib}

\begin{thebibliography}{10}

\bibitem{Bannister}
Nathaniel {Bannister}.
\newblock {Additivity of derived limits in the Cohen model}.
\newblock {\em arXiv e-prints}, page arXiv:2302.07222, February 2023.

\bibitem{barwick_haine}
Clark Barwick and Peter Haine.
\newblock Pyknotic objects, {I}. {B}asic notions.
\newblock {\em arXiv preprint arXiv:1904.09966}, 2019.

\bibitem{SVHDLwLC}
Jeffrey Bergfalk, Michael Hru\v{s}\'{a}k, and Chris Lambie-Hanson.
\newblock Simultaneously vanishing higher derived limits without large
  cardinals.
\newblock {\em J. Math. Log.}, 23(1):Paper No. 2250019, 40, 2023.

\bibitem{SVHDL}
Jeffrey Bergfalk and Chris Lambie-Hanson.
\newblock Simultaneously vanishing higher derived limits.
\newblock {\em Forum Math. Pi}, 9:Paper No. e4, 31, 2021.

\bibitem{blh_inf_comb}
Jeffrey Bergfalk and Chris Lambie-Hanson.
\newblock Infinitary combinatorics in condensed mathematics and strong
  homology.
\newblock {\em Selecta Math. (N.S.)}, 2025.
\newblock To appear.

\bibitem{masterclass}
Dustin Clausen.
\newblock Session 8: Masterclass in condensed mathematics.
\newblock \url{https://youtu.be/AQJ4tzPfDao}, 2020.
\newblock Posted by GeoTopCPH.

\bibitem{CS3}
Dustin Clausen and Peter Scholze.
\newblock Condensed mathematics and complex geometry.
\newblock \url{https://people.mpim-bonn.mpg.de/scholze/Analytic.pdf}.
\newblock Accessed: 23 January 2023.

\bibitem{analytic_stacks}
Dustin Clausen and Peter Scholze.
\newblock Analytic stacks.
\newblock \url{https://youtu.be/YxSZ1mTIpaA?si=8PvFsTN6GSKkWaWy}, 2023.
\newblock Posted by Institut des Hautes \'{E}tudes Scientifiques (IH\'{E}S).

\bibitem{eklof}
Paul~C. Eklof.
\newblock Whitehead's problem is undecidable.
\newblock {\em Amer. Math. Monthly}, 83(10):775--788, 1976.

\bibitem{Eklof_Mekler}
Paul~C. Eklof and Alan~H. Mekler.
\newblock {\em Almost free modules}, volume~65 of {\em North-Holland
  Mathematical Library}.
\newblock North-Holland Publishing Co., Amsterdam, revised edition, 2002.
\newblock Set-theoretic methods.

\bibitem{eklof_shelah}
Paul~C. Eklof and Saharon Shelah.
\newblock On {W}hitehead modules.
\newblock {\em J. Algebra}, 142(2):492--510, 1991.

\bibitem{gobel_trlifaj}
R\"{u}diger G\"{o}bel and Jan Trlifaj.
\newblock {\em Approximations and endomorphism algebras of modules}, volume~41
  of {\em De Gruyter Expositions in Mathematics}.
\newblock Walter de Gruyter GmbH \& Co. KG, Berlin, 2006.

\bibitem{griffith}
Phillip Griffith.
\newblock A note on a theorem of {H}ill.
\newblock {\em Pacific J. Math.}, 29:279--284, 1969.

\bibitem{jech}
Thomas Jech.
\newblock {\em Set theory}.
\newblock Springer Monographs in Mathematics. Springer-Verlag, Berlin,
  millennium edition, 2003.

\bibitem{kunen}
Kenneth Kunen.
\newblock {\em Set theory}, volume 102 of {\em Studies in Logic and the
  Foundations of Mathematics}.
\newblock North-Holland Publishing Co., Amsterdam, 1983.
\newblock An introduction to independence proofs, Reprint of the 1980 original.

\bibitem{le_stum}
Bernard Le~Stum.
\newblock An introduction to condensed mathematics.
\newblock
  \url{https://perso.univ-rennes1.fr/bernard.le-stum/bernard.le-stum/Enseignement_files/CondensedBook.pdf}.
\newblock Accessed: 15 October 2025.

\bibitem{Magidor_Shelah}
Menachem Magidor and Saharon Shelah.
\newblock When does almost free imply free? ({F}or groups, transversals, etc.).
\newblock {\em J. Amer. Math. Soc.}, 7(4):769--830, 1994.

\bibitem{Mardesic}
S.~Marde\v{s}i\'{c} and A.~V. Prasolov.
\newblock Strong homology is not additive.
\newblock {\em Trans. Amer. Math. Soc.}, 307(2):725--744, 1988.

\bibitem{Aparicio_condensed_21}
Rodrigo {Marlasca Aparicio}.
\newblock {Condensed Mathematics: The internal Hom of condensed sets and
  condensed abelian groups and a prismatic construction of the real numbers}.
\newblock {\em arXiv e-prints}, page arXiv:2109.07816, September 2021.

\bibitem{AMathew}
Akhil Mathew.
\newblock Complements on the breen-deligne resolution.
\newblock \url{http://math.uchicago.edu/~amathew/condensed22.html}, 2022.
\newblock Course notes.

\bibitem{moore_forcing}
Justin~Tatch Moore.
\newblock The method of forcing.
\newblock {\em Topol. Appl.}, In press:Paper No. 109500, 2025.

\bibitem{Moskowitz_homological_67}
Martin Moskowitz.
\newblock Homological algebra in locally compact abelian groups.
\newblock {\em Trans. Amer. Math. Soc.}, 127:361--404, 1967.

\bibitem{nobeling}
G.~N\"{o}beling.
\newblock Verallgemeinerung eines {S}atzes von {H}errn {E}. {S}pecker.
\newblock {\em Invent. Math.}, 6:41--55, 1968.

\bibitem{pontrjagin}
Lev~S. Pontrjagin.
\newblock The theory of topological commutative groups.
\newblock {\em Ann. of Math. (2)}, 35(2):361--388, 1934.

\bibitem{Prest}
Mike Prest.
\newblock {\em Purity, spectra and localisation}, volume 121 of {\em
  Encyclopedia of Mathematics and its Applications}.
\newblock Cambridge University Press, Cambridge, 2009.

\bibitem{Saroch}
Jan {\v{S}}aroch.
\newblock On the non-existence of right almost split maps.
\newblock {\em Invent. Math.}, 209(2):463--479, 2017.

\bibitem{CS2}
Peter Scholze.
\newblock Lectures on analytic geometry (all results joint with {D}ustin
  {C}lausen).
\newblock \url{https://people.mpim-bonn.mpg.de/scholze/Analytic.pdf}.
\newblock Accessed: 23 January 2023.

\bibitem{CS1}
Peter Scholze.
\newblock Lectures on condensed mathematics (all results joint with {D}ustin
  {C}lausen).
\newblock \url{https://www.math.uni-bonn.de/people/scholze/Condensed.pdf}.
\newblock Accessed: 23 January 2023.

\bibitem{Shelah_infinite_74}
Saharon Shelah.
\newblock Infinite abelian groups, {W}hitehead problem and some constructions.
\newblock {\em Israel J. Math.}, 18:243--256, 1974.

\bibitem{shelah_singular_compactness}
Saharon Shelah.
\newblock A compactness theorem for singular cardinals, free algebras,
  {W}hitehead problem and transversals.
\newblock {\em Israel J. Math.}, 21(4):319--349, 1975.

\bibitem{specker}
Ernst Specker.
\newblock Additive {G}ruppen von {F}olgen ganzer {Z}ahlen.
\newblock {\em Portugal. Math.}, 9:131--140, 1950.

\bibitem{stacks-project}
The {Stacks Project Authors}.
\newblock \textit{Stacks Project}.
\newblock \url{https://stacks.math.columbia.edu}, 2018.

\bibitem{stein}
Karl Stein.
\newblock Analytische {F}unktionen mehrerer komplexer {V}er\"{a}nderlichen zu
  vorgegebenen {P}eriodizit\"{a}tsmoduln und das zweite {C}ousinsche {P}roblem.
\newblock {\em Math. Ann.}, 123:201--222, 1951.

\bibitem{vaccaro_viale}
Andrea Vaccaro and Matteo Viale.
\newblock Generic absoluteness and boolean names for elements of a {P}olish
  space.
\newblock {\em Boll. Unione Mat. Ital.}, 10(3):293--319, 2017.

\bibitem{VV}
Boban Veli\v{c}kovi\'{c} and Alessandro Vignati.
\newblock Non-vanishing higher derived limits.
\newblock {\em Commun. Contemp. Math.}, 26(7):Paper No. 2350031, 22, 2024.

\bibitem{weibel}
Charles~A. Weibel.
\newblock {\em An introduction to homological algebra}, volume~38 of {\em
  Cambridge Studies in Advanced Mathematics}.
\newblock Cambridge University Press, Cambridge, 1994.

\bibitem{zapletal}
Jind\v{r}ich Zapletal.
\newblock Interpreter for topologists.
\newblock {\em J. Log. Anal.}, 7:Paper 6, 61, 2015.

\end{thebibliography}

\end{document}